\newtheorem{theorem}{Theorem}[section]
\newtheorem{mainthm}{Theorem}
\newtheorem*{theorem*}{Theorem}
\newtheorem{corollary}[theorem]{Corollary}
\newtheorem{proposition}[theorem]{Proposition}
\newtheorem{lemma}[theorem]{Lemma}
\newtheorem{Remark}[theorem]{Remark}
\newtheorem*{definition*}{Definition}
\newtheorem{claim}[theorem]{Claim}
\newtheorem{definition}[theorem]{Definition}
\newtheorem*{Question*}{Question}
\def\T{\mathbb{T}}
\def\norm #1{\Vert \,#1\, \Vert\,}
\newcommand{\Rmnum}[1]{\expandafter\@slowromancap\romannumeral #1@}
\def\ud{\mathrm{d}}
\def\diff {\operatorname{Diff}}
\def\dim{\operatorname{dim}}
\def\homeo{\operatorname{Homeo}}
\def\Orb{\operatorname{Orb}}
\def\Int{\operatorname{Int}}
\def\Id{\operatorname{Id}}
 \def\NN{{\mathbb N}}  
 \def\RR{{\mathbb R}}  \def\TT{{\mathbb T}}
 \def\ZZ{{\mathbb Z}}
\def\La{\Lambda}
\def\e{\varepsilon}
\def\cA{\mathcal{A}}    
\def\cB{\mathcal{B}}   \def\cN{\mathcal{N}}
\def\cF{\mathcal{F}}  \def\cL{\mathcal{L}}
\numberwithin{equation}{section}         
\begin{document}
\vspace{-2cm}

\title 
{Transitive partially hyperbolic diffeomorphisms with   one-dimensional neutral center }
\author{Christian Bonatti and Jinhua Zhang \footnote{J.Z was supported by  the 
ERC project 692925 \emph{NUHGD}. }}

\vspace{-2cm}

\maketitle
\begin{abstract}
In this paper, we study transitive partially hyperbolic diffeomorphisms with one-dimensional topologically neutral center,  meaning that the length of the iterate of small center segments remains small. Such systems are dynamically coherent. We show that there exists a continuous metric along the center foliation which is invariant under the dynamics. 

As an application, we classify the transitive  partially hyperbolic diffeomorphisms on $3$-manifolds with topologically neutral center. 

\smallskip
\hspace{-1cm}\mbox
\smallskip

\noindent{\bf Mathematics Subject Classification (2010).}   37D30, 37C15, 37E05, 57M60.
\\
{\bf Keywords.} Partial hyperbolicity, dynamical coherence,  conjugacy, 
transitivity, neutral.
 
\end{abstract}

\section{Introduction}
A $C^1$ diffeomorphism$f$ on a  closed manifold $M$ is \emph{partially hyperbolic} if there exists a $Df$-invariant splitting
$TM=E^{s} \oplus  E^{c}\oplus  E^{u}$ such that $E^{s}$ is uniformly contracting,  $E^{u}$  is uniformly expanding and $E^{c}$ has the intermediate behavior; to be precise, there exists an integer $N\in\NN$ such that for any $x\in M$
\begin{itemize}
	\item {\bf{Contraction and expansion}} $$\|Df^N|_{E^s(x)}\|<\frac 12 \textrm{ and }
	\|Df^{-N}|_{E^u(x)}\|<\frac 12;$$
	\item {\bf{Domination}} $$\|Df^N|_{E^s(x)}\|\cdot\|Df^{-N}|_{E^c(f^N(x))}\|<\frac 12 \textrm{ and }  \|Df^N|_{E^c(x)}\|\cdot\|Df^{-N}|_{E^u(f^N(x))}\|<\frac 12.$$
\end{itemize}
\begin{definition}
	For a  $C^1$ partially hyperbolic diffeomorphism $f$ on $M$, one says   that $f$ is \emph{neutral along center},
	if there exists $C>1$ such that
	$$\frac{1}{C}<\norm{Df^n|_{E^{c}(x)}}<C,\textrm{ for  any $x\in M$ and $n\in\mathbb{Z}$}. $$
	
	One says that $f$ is \emph{topologically neutral along center}  if for any $\e>0$ there is $\delta>0$ so that  any $C^1$-center-path $\sigma$ of length bounded by $\delta$ has all its images $f^n(\sigma), n\in \ZZ$ bounded in length  by $\e$. 
\end{definition}

One easily checks  that if $f$ is neutral, then $f$ is topologically neutral.  However the reverse is not true: there are   partially hyperbolic diffeomorphisms on $3$-manifolds, with $1$-dimensional center bundle, which are topologically neutral but not neutral (see Section~\ref{s.dynamical coherent}).

For partially hyperbolic diffeomorphisms with neutral or topologically neutral center, the center bundle is  uniquely integrable due to ~\cite{HHU2}.

A \emph{center arc} is an equivalence class of locally injective center paths, up to changing the parametrization. A point is a degenerate arc.  

\begin{definition}
	We will call \emph{center metric} a function 
	$\ell^c$ defined on the set of arcs, with the following properties: 
	\begin{itemize}
		\item (positivity) strictly positive on the non-degenerate arcs, and vanishing on degenerate arcs.
		\item (additivity) consider $\sigma \colon [a,b]\to M$ a center path and $c\in [a,b]$ then 
		$$\ell^c(\sigma_{[a,c]}) +\ell^c(\sigma_{[c,b]})= \ell^c(\sigma_{[a,b]}).$$
		
		\item (continuity) if $\sigma_t$ are center arcs associated to a $C^0$-continuous family of center-paths, then $\ell^c(\sigma_t)$ varies continuously with $t$.  
	\end{itemize}
\end{definition}

\subsection{Results in any dimension}
Recall that a diffeomorphism on a connected closed  manifold $M$ is \emph{transitive} if it admits a dense orbit. In this paper, we work in $C^1$-scenario.

\begin{mainthm}~\label{thm.main-metric}
	Let $f$ be a $C^1$-partially hyperbolic diffeomorphism on a closed manifold $M$. Assume that $f$ has one-dimensional topologically neutral center and $f$ is transitive, then there exists a center metric which is invariant under $f$ (in other words, the action of $f$ on  center leaves is by isometries for this center-metric). 
	
	As a consequence, this center metric is invariant by the strong stable and strong unstable holonomies. 
	
	Furthermore the invariant center metric is unique up to multiplying by a (positive) constant.
\end{mainthm}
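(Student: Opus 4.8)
\emph{Strategy.} Fix an auxiliary Riemannian metric on $M$ and let $\ell_0$ be the induced length on center paths; by \cite{HHU2} the center is a $1$-dimensional foliation $\cF^c$ with $C^1$ leaves. The plan is to obtain $\ell^c$ as the value of a Banach limit on the Cesàro averages of the functions $\sigma\mapsto\ell_0(f^n\sigma)$, and then to extract its continuity from topological neutrality together with transitivity. First I record two quantitative forms of neutrality, both obtained by cutting a center path into sub-arcs of controlled length: there is a function $C(\cdot)$ with $C(r)\to0$ as $r\to 0^+$ such that $\sup_{n\in\ZZ}\ell_0(f^n\sigma)\le C(\ell_0(\sigma))$ for every center arc $\sigma$; and, applying neutrality to $f^n\sigma$ and noting that its $f^{-n}$-image is $\sigma$, a positive function $\delta_1(\cdot)$ with $\inf_{n\in\ZZ}\ell_0(f^n\sigma)\ge\delta_1(\ell_0(\sigma))>0$ for every non-degenerate $\sigma$. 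I also note that $\ell_0$ is continuous along $C^0$-continuous families of center arcs: since center leaves are $1$-manifolds, a locally injective center path is monotone, so its $\ell_0$-length is the leafwise distance of its endpoints plus an (eventually locally constant) number of full turns, all varying continuously.

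\emph{The averaging.} Put $\ell_N(\sigma)=\tfrac1{2N+1}\sum_{n=-N}^N\ell_0(f^n\sigma)$. Each $\ell_N$ is additive in $\sigma$, vanishes on degenerate arcs, is $\ge\delta_1(\ell_0(\sigma))$ on non-degenerate ones, and is continuous along $C^0$-continuous families. Choose a shift-invariant generalized limit $\mathrm{LIM}$ on bounded sequences and set $\ell^c(\sigma)=\mathrm{LIM}_N\,\ell_N(\sigma)$; since $(\ell_N(\sigma))_N\subset[\delta_1(\ell_0(\sigma)),C(\ell_0(\sigma))]$, this is well defined, additive, vanishes on points, and is strictly positive on non-degenerate arcs. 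Invariance is the usual telescoping: $\ell_N(f\sigma)-\ell_N(\sigma)=\tfrac1{2N+1}\big(\ell_0(f^{N+1}\sigma)-\ell_0(f^{-N}\sigma)\big)$, with numerator bounded by $2C(\ell_0(\sigma))$, hence null; as $\mathrm{LIM}$ annihilates null sequences, $\ell^c(f\sigma)=\ell^c(\sigma)$.

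\emph{Continuity — the main point.} Restricted to one center leaf, $\ell^c$ is the Stieltjes measure of $a\mapsto\ell^c(\sigma_{[a_0,a]})$, which is continuous (again by the bound $\ell^c(\sigma_{[a,a']})\le C(\ell_0(\sigma_{[a,a']}))$) and strictly increasing; so $\ell^c$ is a non-atomic, fully supported leafwise measure and $t\mapsto\ell^c(\sigma_t)$ is continuous for families staying in a leaf. The delicate part is transverse continuity, and this is where transitivity is essential: since $\mathrm{LIM}$ need not preserve continuity, one must show that $\{\ell_N\}_N$ is \emph{equicontinuous} along $C^0$-continuous families $\sigma_t$, i.e. $\sup_{n\in\ZZ}|\ell_0(f^n\sigma_t)-\ell_0(f^n\sigma_{t_0})|\to0$ as $t\to t_0$. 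Using dynamical coherence one reduces to families whose transverse displacement is along a strong stable, resp. strong unstable, holonomy; along the stable direction the forward iterates $f^n\sigma_t$ ($n\ge0$) converge to $f^n\sigma_{t_0}$ with a rate summable in $n$, so the forward part of the Cesàro average equicontinues, and symmetrically for the backward part along the unstable direction. The contribution of the "wrong" range of iterates is controlled not arc by arc but through a point $p$ with dense orbit: the arcs $f^{n_k}\sigma_t$ return, along a suitable set of times, into regions where they are $C^0$-close to the already understood arcs $f^{n_k}\sigma_{t_0}$ (and one may even extract $f^{n_k}\sigma_t,f^{n_k}\sigma_{t_0}\to\sigma_\infty$ using compactness of the set of arcs of length $\le C(\ell_0(\sigma_{t_0}))$), which forces the averages to equicontinue. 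This equicontinuity yields continuity of $\ell^c=\mathrm{LIM}_N\ell_N$ along arbitrary $C^0$-continuous families, so $\ell^c$ is a center metric. I expect this to be the main obstacle.

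\emph{Holonomy invariance and uniqueness.} Let $h^s$ be a strong stable holonomy carrying a center arc $\sigma$ to $h^s(\sigma)$. Then $f^n(h^s\sigma)=h^s_n(f^n\sigma)$ where $h^s_n$ is a strong stable holonomy between leaves whose separation tends to $0$; extracting $f^{n_k}\sigma\to\sigma_\infty$ (lengths stay $\le C(\ell_0(\sigma))$) we also get $f^{n_k}h^s\sigma\to\sigma_\infty$, so by continuity of $\ell^c$ under $C^0$-convergence of arcs of bounded length, $\ell^c(f^{n_k}\sigma)-\ell^c(f^{n_k}h^s\sigma)\to0$; but both sides are independent of $k$ by invariance, whence $\ell^c(\sigma)=\ell^c(h^s\sigma)$, and strong unstable holonomies are handled identically. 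For uniqueness, if $\ell^c_1,\ell^c_2$ are invariant center metrics, so is $\nu:=\ell^c_1+\ell^c_2$ and, leafwise, $\ell^c_i\ll\nu$; the Radon–Nikodym densities satisfy $\rho_1+\rho_2=1$ and, by $f$-invariance of $\ell^c_i$ and of $\nu$, are $f$-invariant. The continuity axioms force $\rho_i$ — equivalently the pointwise limit $\lim_{\sigma\downarrow x}\ell^c_1(\sigma)/\nu(\sigma)$, which exists by non-atomicity and is $W^s$-, $W^u$-, $f$-invariant — to be a genuine continuous function on $M$, hence constant on the dense orbit of $p$, hence constant; therefore $\ell^c_1$ and $\ell^c_2$ are proportional.
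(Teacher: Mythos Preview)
Your averaging approach is fundamentally different from the paper's, and the gap you yourself flag --- transverse continuity of $\ell^c$ --- is real and, I believe, not repairable along these lines. Consider $f=A\times R_\alpha$ on $\TT^2\times S^1$ (linear Anosov $\times$ irrational rotation), which is transitive with isometric one-dimensional center, and take the auxiliary center metric $\ell_0$ whose density on the fibre over $p\in\TT^2$ is $1+\phi(p)$ for a continuous $\phi$. For $\sigma_p=\{p\}\times I$ your construction gives
\[
\ell^c(\sigma_p)\;=\;|I|\cdot \mathrm{LIM}_N\,\frac{1}{2N+1}\sum_{n=-N}^{N}\bigl(1+\phi(A^np)\bigr),
\]
and at any two fixed points $q_1,q_2$ of $A$ this evaluates to $|I|(1+\phi(q_1))$ and $|I|(1+\phi(q_2))$. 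So unless $\phi$ is cohomologous to a constant, $\ell^c$ is discontinuous. This is exactly the failure of your ``wrong range'' step: along a strong-stable family $\sigma_{p_t}\to\sigma_{p_0}$, the backward part of the Ces\`aro sum is a Birkhoff average of $\phi$ along the (non--uniquely-ergodic) Anosov map, and such averages are simply not equicontinuous; no density-of-returns argument with a transitive point can change the value of the Birkhoff average at a periodic point. Your holonomy-invariance and uniqueness paragraphs are fine in spirit but both \emph{consume} continuity, so they do not salvage the construction. (In the uniqueness paragraph you should also justify that $\rho_i(x)=\lim_{\sigma\downarrow x}\ell^c_1(\sigma)/\nu(\sigma)$ actually exists \emph{everywhere}, not just $\nu$-a.e.)

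The paper avoids averaging altogether. On the residual set $\cN=\{x:\alpha(x)=\omega(x)=M\}$ it studies, for each center leaf $L\subset\cN$, the set $\cL^+(L)$ of orientation-preserving \emph{limit center maps} (pointwise limits of $f^{n_i}|_L$). Topological neutrality makes these local homeomorphisms, and transitivity is used to show that $\cL^+(L)$ is a group acting freely and transitively on $L$; H\"older's theorem then conjugates it to the full translation group of $\RR$ (or the rotation group of $S^1$), which determines an $\cL(L)$-invariant metric on $L$ uniquely up to a scalar. Limit center maps between different leaves in $\cN$ transport these metrics consistently, and holonomy invariance is proved \emph{directly} from the limit-map structure (a forward limit of $f^{n_i}|_{L_1}$ and of $f^{n_i}|_{L_2}$ differ by the stable holonomy), not from any prior continuity. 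Only then is the metric extended from $\cN$ to $M$ via local product charts and the holonomy invariance just established. The conceptual point is that the paper builds $\ell^c$ leafwise from a rigidity statement (H\"older), and obtains transverse coherence from the dynamics of limit maps; an orbit-averaging scheme would need uniformity of Birkhoff sums that a partially hyperbolic system does not provide.
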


When the center bundle is orientable and $f$ preserves the orientation of the center, the center metric gives an continuous flow, by following the center leaves at constant speed. The invariance of  the center metric implies that  the constant speed flow is invariant  under the dynamics.
Thus next result is a straightforward corollary of Theorem~\ref{thm.main-metric}:

\begin{mainthm}~\label{thm.main-flow}
	Let $f$ be a $C^1$ partially hyperbolic diffeomorphism on a closed manifold $M$. Assume that
	\begin{itemize}
		\item[--] $f$ has one-dimensional topologically neutral center and $f$ is transitive;
		\item[--] $E^c$ is orientable and $f$ preserves its orientation;
	\end{itemize}
	then there exists a continuous flow $\{\varphi_t\}_{t\in\mathbb{R}}$ on $M$ with the following properties:
	\begin{itemize}
		\item $\{\varphi_t(x)\}_{t\in\mathbb{R}}=\cF^c(x)$ for any $x\in M$; in particular, $\{\varphi_t\}_{t\in\mathbb{R}}$ has no singularities;
		\item $f$ commutes with the flow $\varphi_t$, that is, $f\circ\varphi_t=\varphi_t\circ f$ for any $t\in\mathbb{R}$.
	\end{itemize}
\end{mainthm}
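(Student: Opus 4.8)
The plan is to obtain the flow by reparametrizing the center foliation by arclength with respect to the invariant center metric $\ell^c$ furnished by Theorem~\ref{thm.main-metric}. Since $E^c$ is orientable and $f$ preserves its orientation, fix a continuous orientation of $E^c$; this orients every center leaf coherently and the orientation is preserved by $f$. For $x\in M$ and $t\ge 0$, let $\varphi_t(x)$ be the endpoint of the positively oriented center arc issuing from $x$ whose $\ell^c$-length equals $t$; for $t<0$ take instead the negatively oriented arc of $\ell^c$-length $|t|$, and set $\varphi_0=\Id$. It then remains to check that $\varphi_t$ is well defined for every $t\in\RR$, that the $\varphi_t$ form a continuous flow, that each orbit equals a full center leaf, that there is no singularity, and that $f\circ\varphi_t=\varphi_t\circ f$.

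On a fixed center leaf $L$ (homeomorphic to a line or to a circle), the map sending $y\in L$ to the signed $\ell^c$-length of the arc from $x$ to $y$ is strictly monotone by additivity and positivity of $\ell^c$ and continuous by the continuity axiom, hence a homeomorphism of $L$ onto an open interval, or onto $\RR/\lambda\ZZ$ when $L$ is a circle of total $\ell^c$-length $\lambda>0$. What must be ruled out is that a line leaf have finite $\ell^c$-length on one side: this is excluded by a compactness argument, covering $M$ by finitely many foliation boxes and using continuity and positivity of $\ell^c$ to see that the arclength parametrization of a leaf can always be prolonged, each step gaining at least a fixed amount of $\ell^c$-length, so a line leaf has infinite $\ell^c$-length on each side. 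Consequently $\varphi_t(x)$ is defined for all $t\in\RR$ and its orbit is exactly $\cF^c(x)$; the identities $\varphi_0=\Id$ and $\varphi_{s+t}=\varphi_s\circ\varphi_t$ are immediate from additivity of $\ell^c$, and since $\cF^c(x)$ is a one-dimensional leaf and not a point, the flow has no singularity.

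Continuity of $(t,x)\mapsto\varphi_t(x)$ is where the continuity axiom of $\ell^c$ is really used. Fixing $(t_0,x_0)$, local triviality of the center foliation provides, for $x$ near $x_0$, a $C^0$-continuous family of center paths $\sigma_x$ starting at $x$ and sweeping out a fixed-size piece of $\cF^c(x)$; then $(s,x)\mapsto\ell^c\bigl(\sigma_x|_{[0,s]}\bigr)$ is continuous and strictly increasing in $s$, so its inverse in the variable $s$ is jointly continuous, which is exactly the continuity of $\varphi_t(x)$ in $(t,x)$ near $(t_0,x_0)$; a patching argument over a finite cover gives global joint continuity.

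Finally, since $f$ maps center leaves to center leaves preserving their orientation, and since $\ell^c$ is $f$-invariant by Theorem~\ref{thm.main-metric}, $f$ carries the positively oriented center arc from $x$ to $\varphi_t(x)$, of $\ell^c$-length $t$, onto the positively oriented center arc issuing from $f(x)$ of $\ell^c$-length $t$, whose endpoint is $\varphi_t(f(x))$ by definition; hence $f\circ\varphi_t=\varphi_t\circ f$ for all $t$. I expect the only genuinely delicate point to be the completeness of the center leaves for $\ell^c$ — that $\varphi_t$ is defined for \emph{all} real $t$ rather than on a bounded time interval — since this is exactly what would fail for an arbitrary continuous metric along a foliation; it is here forced by the uniform lower bound on plaque lengths coming from continuity and positivity of $\ell^c$ together with compactness of $M$. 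Everything else is a formal consequence of the three axioms defining a center metric and of its $f$-invariance.
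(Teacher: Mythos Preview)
Your proof is correct and follows precisely the approach the paper indicates: the paper states that Theorem~\ref{thm.main-flow} is a ``straightforward corollary'' of Theorem~\ref{thm.main-metric}, obtained by ``following the center leaves at constant speed'' for the invariant center metric, and you have supplied exactly those details. The paper gives no further argument, so your write-up --- including the completeness of the arclength parametrization via a uniform lower bound on plaque $\ell^c$-length, the joint continuity via the continuity axiom of $\ell^c$, and the commutation via $f$-invariance of $\ell^c$ together with orientation preservation --- is a faithful expansion of what the authors leave implicit.
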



The following result gives the transitivity of a   partially hyperbolic diffeomorphisms with topologically neutral center provided that the orbit of some point is dense in an open set.  Under the setting of partial hyperbolicity and allowing an $\omega$-limit set to contain an open set, the usual way to recover transitivity  is to assume \emph{accessibility}. Here, we strongly use the topologically neutral property. 
\begin{proposition}\label{p.omega}
	Let $f$ be a $C^1$ partially hyperbolic diffeomorphism on a closed connected manifold $M$. Assume that
	\begin{itemize}
		\item $f$ has topologically  neutral center;
		\item there is $y\in M$ whose  $\omega$-limit
		set $\omega(y)$ has non-empty interior.
	\end{itemize} 
	Then $f$ is transitive. 
\end{proposition}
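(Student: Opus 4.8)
The plan is to exploit the topologically neutral hypothesis to ``spread'' the density of the orbit of $y$ in $\operatorname{Int}(\omega(y))$ across all of $M$ using the strong stable and unstable foliations, whose leaves are dense enough to reach every point of the manifold once we know a saturated open set is recurrent. First I would fix an open set $U\subset \omega(y)$. Since $\omega(y)$ is closed and $f$-invariant, and $U$ is contained in it, every point of $U$ has its full forward orbit in $\omega(y)$; moreover $\omega(y)=\omega(z)$ for every $z$ in the forward orbit of $y$, and in particular the orbit of $y$ enters $U$, hence returns to $U$ infinitely often. The first real step is to show that $\omega(y)$ actually contains the strong stable and strong unstable leaves through each of its interior points, at least locally: this is where topological neutrality enters, because a small center plaque through a point of $U$ stays bounded in length under all iterates, so combining center plaques with local strong stable/unstable discs produces a neighborhood whose forward and backward iterates do not ``escape to infinite length'', allowing a compactness/limiting argument to place whole local strong leaves inside $\omega(y)$.

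The second step is a standard open--closed argument for the set
$$
V \;=\; \operatorname{Int}\bigl(\omega(y)\bigr).
$$
I would argue that $V$ is saturated by strong stable and strong unstable leaves: if $x\in V$, then by the first step a uniform-size local $W^s$-disc and local $W^u$-disc through $x$ lie in $\omega(y)$, and by the invariance of $\omega(y)$ and uniform continuity of the strong foliations one upgrades this to: the entire leaves $W^s(x)$ and $W^u(x)$ lie in $\omega(y)$, and in fact in $V$ (a neighborhood of each such leaf is in $\omega(y)$ by the local disc statement applied along the leaf together with center plaques). Now the accessibility-type reachability: the set of points accessible from $x$ by $su$-paths is open, and since $V$ is nonempty, open, and $su$-saturated, if $V\neq M$ its boundary $\partial V$ is a nonempty closed $f$-invariant set disjoint from $V$; but a boundary point $p\in\partial V$ is a limit of points of $V$, hence (using that $W^s(p)$ and $W^u(p)$ vary continuously and that nearby strong leaves sit inside $\omega(y)$) one shows $p$ itself has local strong discs inside $\omega(y)=\overline{V}$, and then the topologically neutral center lets us thicken in the center direction to conclude $p\in\operatorname{Int}(\omega(y))=V$, a contradiction. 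Hence $V=M$, i.e.\ $\omega(y)=M$, so the forward orbit of $y$ is dense and $f$ is transitive.

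The step I expect to be the genuine obstacle is the first one: converting ``a center path of small length has all iterates of bounded length'' into ``local strong stable and unstable discs through interior points of $\omega(y)$ lie in $\omega(y)$.'' The subtlety is that topological neutrality by itself controls only the center direction, whereas the strong discs are contracted/expanded; the point is that $\omega(y)$ already contains an open set, so it contains a small product box $W^s_{loc}\times W^c_{loc}\times W^u_{loc}$ around some point, and then one transports this box by $f^n$ along the recurrence of the orbit of $y$: the $W^u$ and $W^c$ sizes are controlled (the unstable grows but we only need germs, and the center stays bounded by neutrality), while the $W^s$ direction, though it may collapse, still has every one of its iterates landing in $\omega(y)$, so taking limits of these shrinking-in-$s$ but bounded-in-$c$ boxes and using minimality-type arguments inside $\omega(y)$ recovers full local strong discs. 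I would phrase this carefully using that $\omega(y)$ is a compact invariant set on which $f$ is chain transitive (indeed transitive once we are done), and that the strong stable/unstable holonomies between center plaques are well defined by dynamical coherence, which holds here by the cited integrability of $E^c$.
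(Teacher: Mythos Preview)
Your target---show $\omega(y)=M$ by proving it is open---is correct, but you have inverted where topological neutrality does the work, and this leads you to manufacture an obstacle that is not there while glossing over the step that actually uses the hypothesis.

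Saturation of $\omega(y)$ by the strong stable and strong unstable foliations is \emph{elementary} and requires no neutrality whatsoever. The paper's argument: the interior $V=\operatorname{Int}(\omega(y))$ is $f$-invariant and meets the forward orbit of $y$, hence $y\in V$ and $f|_{\omega(y)}$ is a transitive homeomorphism; pick $x\in V$ with $\alpha(x)=\omega(x)=\omega(y)$. For any $z\in\mathcal{F}^{ss}(x)$, choose $N$ large so that $d(f^N(x),f^N(z))$ is small and $f^N(x)$ is close to $x$; then $f^N(z)\in V$, hence $z\in V$. The same argument with backward iterates handles $\mathcal{F}^{uu}(x)$. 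By density of the orbit of $x$, all of $\omega(y)$ is $ss$- and $uu$-saturated. No product boxes, no limits of collapsing discs, no neutrality---your ``genuine obstacle'' evaporates in two lines.

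The place neutrality is genuinely needed is the \emph{center} direction, which you only mention in passing (``thicken in the center direction'') without a mechanism. The mechanism is this: some fixed center plaque $\mathcal{F}^c_{\delta}(x)\subset V$ is carried by the dense orbit of $x$ to every point of $\omega(y)$, and topological neutrality bounds the length of its iterates from below by some $\eta_0>0$; hence every $z\in\omega(y)$ satisfies $\mathcal{F}^c_{\eta_0}(z)\subset\omega(y)$, and $\omega(y)$ is center-saturated. Once $\omega(y)$ is saturated by all three foliations $\mathcal{F}^{ss}$, $\mathcal{F}^{uu}$, $\mathcal{F}^c$, it is automatically open by local product structure, hence equals $M$ since it is also compact and $M$ is connected. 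No accessibility-type argument and no boundary analysis are needed; in fact your $su$-saturation plus open--closed route would \emph{fail} without the center step, since non-accessible systems can have proper open $su$-saturated sets.
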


As a consequence, one has the following observation which has its own interest and  is  useful when the center bundle $E^c$ is not orientable, or $f$ does not preserve an orientation of it.

\begin{proposition}\label{p.lift}
	Let $f$ be a $C^1$ partially hyperbolic diffeomorphism on a closed manifold $M$. Assume that $f$ has topologically  neutral center and $f$ is transitive. Let $\pi:\widehat{M}\rightarrow M$ be a (connected) finite cover of $M$ and $\hat{f}$ be a lift of $f$ to $\widehat{M}$, and $k>0$ be an integer.  Then $\hat{f}^k$ is transitive. 
\end{proposition}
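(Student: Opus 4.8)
The plan is to reduce the statement to Proposition~\ref{p.omega}, applied to the lifted system $(\widehat M, \hat f^k)$. First I note that $\hat f$ is again partially hyperbolic on $\widehat M$ (the splitting, cone fields and the integer $N$ all lift), and that it is topologically neutral along center: a $C^1$-center-path $\hat\sigma$ in $\widehat M$ of length bounded by $\delta$ projects to a center-path $\sigma=\pi\circ\hat\sigma$ in $M$ of the same length (for the metric pulled back by the local diffeomorphism $\pi$), and since $\pi$ is a local isometry for this choice of metrics, the length of $\hat f^n(\hat\sigma)$ equals the length of $f^n(\sigma)$; so the same $\delta$ that works for $f$ and $\e$ works for $\hat f$. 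The same remark shows $\hat f^k$ is topologically neutral (a uniform bound over all $n\in\ZZ$ a fortiori holds over $k\ZZ$). Thus both hypotheses that we need to feed into Proposition~\ref{p.omega} for $\hat f^k$ are of the ``$\omega$-limit has nonempty interior'' type, and the work is to produce such a point.

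The key step is therefore: since $f$ is transitive on $M$, find a point $\hat y\in\widehat M$ whose $\omega$-limit set under $\hat f^k$ has nonempty interior. Let $y\in M$ have dense forward orbit under $f$ (density of the full orbit plus, say, a short argument using recurrence, or simply invoke that a transitive $f$ has a point with dense forward orbit). Pick any lift $\hat y\in\pi^{-1}(y)$. Then $\overline{\{\hat f^n(\hat y): n\geq 0\}}$ projects onto $M$, hence is a closed $\hat f$-invariant set mapping onto $M$ by the finite covering $\pi$; being a union of fibers' worth of points it has nonempty interior in $\widehat M$ (its projection is all of $M$, and $\pi$ is open, so the closure contains, in particular, an open set — more carefully, the closure of the forward orbit is a closed set whose image is $M$, and since $\widehat M$ is a finite cover, at least one point of some fiber is a forward-limit point of $\hat y$ with full projection, forcing nonempty interior of $\omega_{\hat f}(\hat y)$). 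This gives a point whose $\hat f$-orbit has $\omega$-limit set with nonempty interior; passing to $\hat f^k$, one of the $k$ points $\hat y, \hat f(\hat y),\dots,\hat f^{k-1}(\hat y)$ has, under $\hat f^k$, an $\omega$-limit set with nonempty interior, because $\omega_{\hat f}(\hat y)=\bigcup_{i=0}^{k-1}\omega_{\hat f^k}(\hat f^i(\hat y))$ and a finite union of closed sets with empty interior has empty interior (Baire).

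Once such a point for $\hat f^k$ is in hand, Proposition~\ref{p.omega} applied to the partially hyperbolic, topologically neutral diffeomorphism $\hat f^k$ on the closed connected manifold $\widehat M$ yields that $\hat f^k$ is transitive, which is the claim.

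The main obstacle I anticipate is the ``nonempty interior of the $\omega$-limit'' step: passing from ``forward orbit of $\hat y$ is dense in a set surjecting onto $M$'' to ``$\omega_{\hat f}(\hat y)$ has nonempty interior'' is not completely formal, since the closure of the forward orbit could a priori carry its mass on the initial segment. The clean way around this is to first use transitivity of $f$ together with the fact that the set of points with dense \emph{forward} orbit is residual (so in particular we may choose $y$ with $\omega_f(y)=M$); then $\omega_f(y)=M=\pi(\omega_{\hat f}(\hat y))$ for any lift $\hat y$ (the $\omega$-limit commutes with the covering map for a finite cover), and since $\pi$ is a finite-to-one open map, $\omega_{\hat f}(\hat y)$ is a closed set with $\pi(\omega_{\hat f}(\hat y))=M$, hence has nonempty interior. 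From there the argument above goes through verbatim. Everything else is the routine check that partial hyperbolicity and (topological) neutrality lift, which I would dispatch in a sentence.
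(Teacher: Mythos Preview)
Your approach is correct and is precisely the reduction to Proposition~\ref{p.omega} that the paper intends; the paper in fact gives no separate proof of Proposition~\ref{p.lift}, presenting it simply ``as a consequence'' of Proposition~\ref{p.omega}. The one step you leave slightly implicit---that a closed subset $A\subset\widehat M$ with $\pi(A)=M$ under a finite covering must have nonempty interior---follows from a local Baire argument on an evenly covered neighborhood (write $\pi^{-1}(U)=V_1\sqcup\cdots\sqcup V_d$, so $U=\bigcup_i \pi(A\cap V_i)$ is a finite union of closed sets), and together with your Baire decomposition $\omega_{\hat f}(\hat y)=\bigcup_{i=0}^{k-1}\omega_{\hat f^k}(\hat f^i(\hat y))$ this completes the argument.
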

We remark that in Propositions~\ref{p.lift} and~\ref{p.omega}, we don't assume the center to be $1$-dimensional.

Considering non-transitive  partially hyperbolic diffeomorphisms with topologically neutral center,   we get the following result which may be useful for further studies: 

\begin{proposition}~\label{p.recurrent-center-saturated} Let $f$ be a $C^1$ partially hyperbolic diffeomorphism with $1$-dimensional topologically neutral center. Then the set of recurrent (resp. positively recurrent) points is saturated 
	by the center leaves. 
\end{proposition}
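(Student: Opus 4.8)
The plan is to prove the statement for positively recurrent points ($x\in\omega(x)$); the case of (two--sided) recurrent points then follows by applying the same result to $f^{-1}$, which is again $C^1$ partially hyperbolic (swap $E^s,E^u$), topologically neutral along the same center (topological neutrality is symmetric in $n\leftrightarrow-n$), and has the same center foliation. So let $x$ be positively recurrent and $y\in\cF^{c}(x)$; in fact I will show that the \emph{whole} leaf $\cF^{c}(x)$ consists of positively recurrent points. Fix $n_k\to+\infty$ with $f^{n_k}(x)\to x$. Since the center is one--dimensional, $\cF^{c}(x)$ is a line or a circle, and $f^{n_k}$ carries it homeomorphically onto the nearby leaf $\cF^{c}(f^{n_k}(x))$. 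The first step is a compactness statement. Using topological neutrality — concretely, subdividing a center arc into pieces of length $\le\delta(\e)$ yields the bound $\e\cdot\lceil\operatorname{length}/\delta(\e)\rceil$, \emph{uniform over all iterates}, on the length of every iterate of a fixed center arc — together with Helly's selection theorem for monotone maps, I extract a subsequence along which $f^{n_k}|_{\cF^{c}(x)}$ converges (uniformly on compact parts of the leaf, in arclength parametrizations) to a map $h$ of $\cF^{c}(x)$. Two further uses of topological neutrality make $h$ well behaved: it has no jumps (a jump of size $\sigma$ would force arbitrarily short center arcs to have iterates of length $\ge\sigma-o(1)$), and it is injective (if $h$ collapsed two points then, applying topological neutrality to $f^{-n_k}$, the iterates $f^{n_k}$ of the center arc joining them would have length bounded below, so their images could not collide). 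Hence $h$ is a continuous monotone injection of $\cF^{c}(x)$ with $h(x)=x$.

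The goal now reduces to showing $h=\Id$: then $f^{n_k}(w)\to w$ for every $w\in\cF^{c}(x)$ and we are done. If the $h$ produced above reverses orientation I replace $n_k$ by $2n_k$ (checking $f^{2n_k}(x)\to x$ and $f^{2n_k}|_{\cF^{c}(x)}\to h^2$), so I may assume $h$ preserves orientation. Moreover the set $G$ of all maps of $\cF^{c}(x)$ arising as such limits, over sequences $m_k\to+\infty$ with $f^{m_k}(x)\to x$, is a semigroup: composing two such sequences, after passing to a sufficiently fast subsubsequence, produces another one (using uniform continuity of each iterate of $f$ and continuity of the center foliation). Hence $h^{m}\in G$ for every $m\ge1$; in particular each $h^{m}$ is itself a limit of iterates $f^{N}$ with $f^{N}(x)\to x$ and $f^{N}|_{\cF^{c}(x)}\to h^{m}$.

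Suppose, for a contradiction, $h\neq\Id$, and pick a connected component $(a,b)$ of the complement of $\operatorname{Fix}(h)$ (lifting to $\RR$ in the circle case, where $h$ still fixes the lift of $x$, so that all components are bounded). On $(a,b)$ the sign of $h-\Id$ is constant; say $h$ pushes points toward the endpoint $b$, so that $h^{m}(t_0)\to b$ for $t_0\in(a,b)$ when $b<+\infty$, and $h^{m}(t_0)\to+\infty$ otherwise (the opposite sign is symmetric). Now I run one of two arclength arguments. \emph{Stretching case:} if $h^{m}$ enlarges some \emph{fixed} center arc $\tau\subset\cF^{c}(x)$ without bound — this occurs for a bounded component (take $\tau=\gamma([a,a+\eta])$, so $h^{m}(\tau)=\gamma([a,h^{m}(a+\eta)])$ has length $\to b-a>0$) and for an expanding unbounded component (take $\tau=\gamma([0,t_0])$ or $\gamma([t_0,0])$, so $h^{m}(\tau)$ has length $|h^{m}(t_0)|\to\infty$) — then, as $h^{m}(\tau)$ is a limit of iterates of the fixed arc $\tau$ and such iterates have uniformly bounded length, the limiting lengths are bounded, a contradiction. \emph{Shrinking case:} if $h^{m}$ contracts some fixed center arc $\tau$ to arbitrarily small length — this occurs in the remaining (contracting) cases, e.g. $\tau=\gamma([t_0,b])$ with $b<+\infty$, where $\operatorname{length}(h^{m}(\tau))=b-h^{m}(t_0)\to0$ — then for large $m$ one has $\operatorname{length}(f^{N}(\tau))<\delta(\e_0)$ for a suitable large $N$ with $f^{N}|_{\cF^{c}(x)}$ close to $h^{m}$; topological neutrality applied to the short arc $f^{N}(\tau)$ forces all of \emph{its} iterates, in particular $f^{-N}(f^{N}(\tau))=\tau$, to have length $\le\e_0$, whence $\operatorname{length}(\tau)\le\e_0$ for every $\e_0>0$, contradicting nondegeneracy of $\tau$. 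In every case $\operatorname{Fix}(h)$ is the whole leaf, so $h=\Id$, completing the proof.

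The step I expect to be the main obstacle is the first one: rigorously setting up the limit map $h$ as a genuine injection of the \emph{entire} leaf, and, relatedly, showing that $h^{m}$ is again realized as a limit of iterates $f^{N}$ with $f^{N}(x)$ returning to $x$ and $f^{N}|_{\cF^{c}(x)}\to h^{m}$. This is precisely where topological neutrality must be used in its uniform form on \emph{both} sides — to kill jumps (length blow--up) and to kill collapses (length shrinkage) — and the circle--leaf case additionally needs the remark that leaves near a compact center leaf have length bounded above and below, again by topological neutrality applied to $f$ and to $f^{-1}$. Once these points are secured, the dichotomy in the previous paragraph is short and is the conceptual core: a nontrivial limit homeomorphism along the center of a recurrent leaf would, upon iteration, either stretch or squeeze a fixed center arc beyond what uniformly bounded--length iterates permit.
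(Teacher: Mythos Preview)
Your approach is essentially the paper's: it packages your limit map $h$ and its properties into separate lemmas about ``limit center maps'' (existence via a diagonal argument, uniform two--sided neutrality, closure under composition, and the key fact that a limit center map fixing a point is the identity or an involution), then invokes these to conclude that $\Id_{L_x}$ is itself such a limit. One small slip: in your stretching case for a bounded component $(a,b)$, the length of $h^m([a,a+\eta])$ tends to the finite value $b-a$, not to infinity, so this does not contradict the uniform bound on iterates of a fixed arc; but your shrinking case with $\tau=[t_0,b]$ already handles bounded components correctly, so the argument goes through.
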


Let us finish these general results by observing that Theorems~\ref{thm.main-metric} and ~\ref{thm.main-flow} are no more true if one removes the transitivity hypothesis: \emph{consider the partially hyperbolic diffeomorphism $f$ built in ~\cite{BPP} which is non-transitive and has one dimensional neutral center; the example is obtained by composing a Dehn twist to the time $N$-map of a non-transitive Anosov flow which admits only one transitive attractor,  one transitive repeller and two transverse tori $T_1, T_2$ in the wandering domain; one can assume that the  Dehn twist is supported on an orbit segment of $T_1$;
	the dynamics of $f$ coincides with the time $N$-map of the Anosov flow, hence one has no-choice of the center metric on the repeller and the attractor since the dynamics in the orbit of $T_2$ coincides with the time $N$-map of the Anosov flow; however,  one can do a small perturbation in the support of the Dehn twist and one gets a new partially hyperbolic diffeomorphism with neutral center and does not admit invariant metric. As it is not main aim of this paper, we will not present all the details.  }

\subsection{Classification result in dimension $3$}
Given two diffeomorphisms $f,g$ on a closed manifold $M$, one says that $f$ is \emph{$C^0$-conjugate} to $g$ if there exists a homeomorphism $h$ on $M$ such that $h\circ f=g\circ h.$

Using Theorem~\ref{thm.main-metric}, we obtain the following \emph{classification up to conjugacy}:
\begin{mainthm}~\label{thm.main}
	Let $f$ be a $C^1$ partially hyperbolic diffeomorphism on a closed 3-manifold $M$. Assume that $f$ has one-dimensional topologically neutral center and $f$ is transitive, then up to  finite lifts and iterates,
	$f$ is $C^0$-conjugate to one of the followings:
	\begin{itemize}
		\item skew products over  a linear Anosov on $\mathbb{T}^2$ with the rotations of the circle;
		\item the time 1-map of a  transitive topological Anosov flow.
	\end{itemize}
\end{mainthm}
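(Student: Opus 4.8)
The plan is to reduce to a convenient orientable model, and then to split according to whether the center foliation has all its leaves compact.

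\medskip
\noindent\emph{Step 1: reductions and the induced dynamics on the leaf space.} Passing to a finite cover and a finite iterate — harmless for the statement, preserving transitivity by Proposition~\ref{p.lift}, and preserving $1$-dimensionality, topological neutrality and dynamical coherence of the center — I would arrange that $E^s$, $E^c$, $E^u$ are orientable and that $f$ preserves each of these orientations. Dynamical coherence then furnishes $f$-invariant foliations $\cF^{cs}$, $\cF^{cu}$ tangent to $E^s\oplus E^c$ and $E^c\oplus E^u$, and Theorem~\ref{thm.main-flow} furnishes a continuous flow $\varphi_t$ whose orbits are exactly the center leaves and which commutes with $f$. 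Since $f$ permutes center leaves, it descends to a homeomorphism $\bar f$ of the center-leaf space $\cO=M/\cF^c$ (a possibly non-Hausdorff surface), which preserves the two one-dimensional foliations induced by $\cF^{cs}$ and $\cF^{cu}$; and $\bar f$ has a dense orbit because $f$ does and $M\to\cO$ is continuous and onto.

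\medskip
\noindent\emph{Step 2: the dichotomy (the main obstacle).} The classification rests on the following alternative: either (A) every center leaf is a circle, or (B) some finite iterate of $f$ fixes every center leaf. Proving this is the step I expect to be hardest: a priori $f$ could permute center leaves along a non-compact, ``branching'' pattern, and ruling this out is precisely the content of (B). I would establish it by analysing the action of $f$ on the foliations $\cF^{cs}$, $\cF^{cu}$ and on the leaf space $\cO$: the strong stable (resp. unstable) foliation inside a $cs$- (resp. $cu$-) leaf is contracted (resp. expanded), so the local structure of the orbits of $\bar f$ is constrained; combining this with the transitivity of $f$ — which forces the $\bar f$-orbit of the projection of a point with dense orbit to be dense in $\cO$ — and with the $f$-invariance and strong-holonomy-invariance of the center metric $\ell^c$ of Theorem~\ref{thm.main-metric}, one shows that either $\cO$ is a closed surface, whence $\cF^c$ is a foliation of $M$ by circles, or else $\bar f$ has finite order, whence an iterate of $f$ fixes every center leaf.

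\medskip
\noindent\emph{Step 3: case (A), skew products.} If every center leaf is a circle, then by Epstein's theorem the center foliation is a Seifert fibration $M\to\Sigma$ over a closed $2$-orbifold with uniformly bounded leaf length, and $f$ descends to a homeomorphism $\bar f$ of $\Sigma$; the bundles $E^s$, $E^u$ push down to a continuous $\bar f$-hyperbolic splitting of $T\Sigma$, so $\bar f$ is a transitive expansive homeomorphism of $\Sigma$. A closed $2$-orbifold carrying such a map is, after a further finite cover, the torus $\mathbb{T}^2$, on which a transitive expansive homeomorphism is topologically conjugate to a linear Anosov automorphism $A$. Pulling the Seifert fibration back along this cover produces a circle bundle over $\mathbb{T}^2$ and a finite cover of $M$ to which a suitable iterate of $f$ lifts, fibering over $A$. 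Finally, by Theorem~\ref{thm.main-metric} the map $f$ acts on each circle fibre $(\cF^c(x),\ell^c)$ by an orientation-preserving isometry, hence by a rotation, and for the same reason the strong stable and strong unstable holonomies of the bundle are rotations; straightening the bundle with these data identifies $f$, up to conjugacy, with a skew product over $A$ by rotations of the circle.

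\medskip
\noindent\emph{Step 4: case (B), discretized Anosov flows.} Replacing $f$ by an iterate, assume $f$ fixes every center leaf. Using the orientation of $E^c$ and the commutation $f\circ\varphi_t=\varphi_t\circ f$, there is a continuous function $\tau\colon M\to\mathbb{R}$, constant along center leaves, with $f(x)=\varphi_{\tau(x)}(x)$. Since $f$ is transitive it is not the identity, so $\tau\not\equiv 0$; exploiting the hyperbolic behaviour transverse to $\cF^c$ and transitivity — and, if needed, one more iterate to absorb compact center leaves — one shows that $\tau$ is nowhere zero, hence of constant sign on the connected manifold $M$, and (replacing $\varphi$ by $\varphi^{-1}$ if necessary) $\tau>0$. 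A time change of $\varphi$ then produces a continuous flow $\{\psi_s\}_{s\in\mathbb{R}}$ with the same orbits (the center leaves) and with $\psi_1=f$. It remains to recognise $\psi$ as a topological Anosov flow: $\cF^{cs}$ and $\cF^{cu}$ are its weak stable and weak unstable foliations, $E^s$ and $E^u$ provide the transverse contraction and expansion, the topological neutrality of the center together with dynamical coherence yields expansiveness of $\psi$, and $\psi$ is transitive because $f=\psi_1$ is. Thus $f$ is the time-one map of a transitive topological Anosov flow, which completes the classification.
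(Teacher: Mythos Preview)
Your overall architecture is reasonable, and Steps~1, 3, and~4 are close to what the paper does (though in Step~4 you work harder than needed: since $\tau$ is constant along center leaves and $f$ commutes with $\varphi_t$, the value $\tau(x_0)$ propagates along the dense $f$-orbit of $x_0$, so $\tau$ is globally a constant $t_0$ and no time change is required).

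The genuine gap is Step~2. You correctly flag the dichotomy as the crux, but your sketch via the leaf space $\cO$ does not supply an argument: ``one shows that'' is exactly where the content lies, and the hyperbolicity/transitivity/holonomy-invariance ingredients you list do not by themselves force $\bar f$ to have finite order. The paper's route is different and has two parts you do not mention. First, one must prove that compact center leaves exist at all (Proposition~\ref{p.existence-of-compact-center-leaf}): otherwise all $cs$-leaves are planes, so $M=\mathbb{T}^3$ by Rosenberg, and a semi-conjugacy to the linear part contradicts topological neutrality. Second, one splits on whether some compact center leaf is \emph{non-periodic}. If so, a shadowing argument (Proposition~\ref{p.existence-of-peiodic-compact-center-leaf}) produces a periodic compact leaf whose $cs$-leaf contains a second compact center leaf, and a Poincar\'e--Bendixson argument plus the first item of Theorem~\ref{thm.bonatti-wilkinson} yields the skew-product conclusion (Proposition~\ref{p.all-compact}). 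If instead every compact center leaf is periodic, the paper introduces an ``$N$-th intersection'' number for homoclinic center leaves of a periodic compact leaf $\gamma$ (Section~6.2), uses the center flow of Theorem~\ref{thm.main-flow} to show it is constant along center leaves and $f$-invariant, proves that only finitely many center leaves have intersection number $\leq N$ (Proposition~\ref{p.finite-center-leaf}), and deduces that every center leaf in $W^s(\gamma)$ is periodic (Corollary~\ref{c.center-leaf-all-periodic}); the second item of Theorem~\ref{thm.bonatti-wilkinson} then gives that an iterate fixes every center leaf. Your leaf-space outline does not substitute for this combinatorial finiteness argument, and I do not see how to complete your Step~2 without something of this kind.
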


\begin{Remark}
	\begin{itemize}
		\item The example in ~\cite{BPP} (see also ~\cite{BZ}) shows that the transitivity assumption is necessary: there are   partially hyperbolic diffeomorphisms $f$ on $3$ manifolds with neutral center and admitting non-compact center leaves which are not periodic. Thus $f$ is not conjugated, and not even center-leaf conjugated, to any of the models in Theorem~\ref{thm.main}. 
		\item During the final preparation of this paper, we notice a paper by P. Carrasco, E. Pujals and  F. Rodriguez-Hertz~\cite{CPH}  proving a classification result under certain smooth rigid conditions. They work in $C^\infty$-setting and obtain a $C^\infty$-conjugacy result. Also, their techniques are different from the ones in this paper.
	\end{itemize}
	
\end{Remark}

As a consequence, one immediately gets the following
\begin{corollary}
	Let $f$ be a transitive partially hyperbolic diffeomorphism on a 3-manifold $M$. Assume that $f$ has one-dimensional topologically neutral center, then $f$ has compact center leaves. Furthermore, if there exist compact center leaves which are non-periodic, then the center foliation is uniformly compact. 
\end{corollary}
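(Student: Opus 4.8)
The plan is to read everything off the classification Theorem~\ref{thm.main}. By that theorem, after passing to a finite cover and an iterate — precisely, there are a finite cover $\pi\colon \widehat M\to M$, a lift $\hat f$ of $f$, an integer $k\ge 1$, and a homeomorphism $h\colon\widehat M\to N$ with $h\circ \hat f^{\,k}=g\circ h$, where $g$ is one of the two model maps: (a) a skew product over a linear Anosov automorphism of $\mathbb{T}^2$ with circle rotations, or (b) the time-one map $\varphi_1$ of a transitive topological Anosov flow $\{\varphi_t\}$ on $N$. I would first record three elementary transfer principles: (i) the center foliation of $\hat f$ equals that of $\hat f^{\,k}$ (same center bundle) and is the $\pi$-pullback of the center foliation of $f$, so a center leaf downstairs is compact iff its preimage components are compact, and $\pi$ carries each compact center leaf of $\hat f$ onto a compact center leaf of $f$; (ii) the conjugacies produced by Theorem~\ref{thm.main} are leaf conjugacies, i.e. $h$ sends the center foliation of $\hat f^{\,k}$ onto the center foliation of the model (the fibre-circle foliation over $\mathbb{T}^2$ in case (a), the orbit foliation of $\{\varphi_t\}$ in case (b)); (iii) the property ``all leaves are compact'' passes through homeomorphisms and through finite covers. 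With these it suffices to analyse the two models.

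\textbf{Existence of a compact center leaf.} In model (a) every center leaf is a fibre circle, hence compact. In model (b), a transitive topological Anosov flow on a closed $3$-manifold has periodic orbits (indeed dense ones, by expansivity together with the shadowing property), and each periodic orbit is a compact center leaf. Thus $g$ has a compact center leaf $L_g$; then $h^{-1}(L_g)$ is a compact center leaf of $\hat f^{\,k}=\hat f$, and $\pi(h^{-1}(L_g))$ is a compact center leaf of $f$ by (i). This gives the first assertion.

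\textbf{Uniform compactness when a non-periodic compact center leaf exists.} Suppose $L\subset M$ is a compact center leaf that is not periodic under $f$. Choose a connected component $\widetilde L$ of $\pi^{-1}(L)$: by (i) it is a compact center leaf of $\hat f$, and it is not periodic under $\hat f$ — if $\hat f^{\,m}(\widetilde L)=\widetilde L$ then $f^{\,m}(L)=\pi(\hat f^{\,m}(\widetilde L))=L$ — hence not periodic under $\hat f^{\,k}$. Then $h(\widetilde L)$ is a compact center leaf of $g$ that is not $g$-invariant. This rules out model (b): there every compact center leaf is a periodic orbit $\gamma$ of the flow, and $\varphi_1(\gamma)=\gamma$, so $g=\varphi_1$ fixes every compact center leaf. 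Hence $g$ is of type (a), in which \emph{every} center leaf is a circle. By (iii) (through $h$, then through the finite cover $\pi$) every center leaf of $f$ is then a circle, so the center foliation of $f$ is a foliation of the closed $3$-manifold $M$ by circles; by Epstein's theorem on circle foliations of compact $3$-manifolds, the leaf lengths are uniformly bounded, i.e. the center foliation is uniformly compact.

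The only inputs that need a word beyond Theorem~\ref{thm.main} are the standard facts used above: that the classifying conjugacies are leaf conjugacies (principle (ii)), that transitive topological Anosov flows carry periodic orbits, and Epstein's uniform-boundedness theorem for circle foliations in dimension three; the substantive content has all been placed in Theorem~\ref{thm.main}, and the corollary is its translation into statements about the center foliation. The one mildly delicate bookkeeping point is checking (i)--(iii) carefully, in particular that ``all leaves compact'' really does transfer up and down the finite cover $\pi$ (each leaf of $f$ is finitely covered by the circle components of its $\pi$-preimage), which I expect to be routine.
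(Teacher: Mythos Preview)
Your argument is correct and is exactly the derivation the paper has in mind: the Corollary is stated without proof as an immediate consequence of Theorem~\ref{thm.main}, and your reading (both models have compact center leaves; only the skew-product model allows a non-periodic compact center leaf; in that model all center leaves are circles, hence the foliation is uniformly compact) is precisely what unpacking that sentence entails. The one point you flag---that the classifying conjugacy respects the center foliations---is indeed not contained in the bare statement of Theorem~\ref{thm.main} but follows from its proof: in case~(a) the conjugacy comes from Theorem~\ref{thm.bonatti-wilkinson}, which is a center-leaf construction, and in case~(b) the proof shows $\hat f^{\,k}$ is literally a time-$t_0$ map of its own center flow, so the center foliation is the orbit foliation and no separate conjugacy is needed (and, incidentally, the first assertion is also available directly as Proposition~\ref{p.existence-of-compact-center-leaf}, an ingredient in the proof of Theorem~\ref{thm.main}).
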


Our  result is motivated by the following question raised in ~\cite{H}.
\begin{Question*}
	Does there exist  a partially hyperbolic diffeomorphism with isometric action on the center bundle which is robustly transitive?
\end{Question*}
The evidence in~\cite{BG,S}  indicates the answer might be negative, but the question remains open.

\medskip

Let us briefly recall some historical background of this paper.  In a talk in 2001, E. Pujals  informally conjectured that the family of  transitive partially hyperbolic diffeomorphisms, up to isotopy,  falls into three parts: time one-map of a transitive Anosov flow, linear Anosov on $\mathbb{T}^3$ and skew-products over  linear Anosov maps on $\mathbb{T}^2$ with  rotations on the circle. Then observed by Bonatti-Wilkinson~\cite{BW}, one has  to take finite lifts and  iterates into account. Inspired by Pujals's conjecture, F. Rodriguez Hertz, J. Rodriguez Hertz and R. Ures conjectured that the family of  dynamically coherent partially hyperbolic diffeomorphisms, up to finite lifts and iterates, falls into three parts as in the conjecture of Pujals. Some partial results towards to these two conjectures have been obtained in ~\cite{BW,HaPo0, HaPo,Bo, Ca, Gogolev}. Then some counter-examples are constructed in~\cite{BPP,BGP,BGHP}. In ~\cite{BPP}, the authors built a dynamically coherent partially hyperbolic diffeomorphism on a 3-manifold which supports an Anosov flow, and the diffeomorphism neither has periodic  center foliation nor is isotopic to identity (therefore is a counter-example to Rodriguez Hertz-Rodriguez Hertz-Ures conjecture, and some generalization is obtained in~\cite{BZ}), furthermore, the example in ~\cite{BPP} is not transitive. In~\cite{BGP,BGHP}, the authors built robustly transitive partially hyperbolic diffeomorphisms on 3-manifolds which do not satisfy Pujals's conjecture, and the examples in~\cite{BGHP}  are designed to be non-dynamically coherent, but the dynamical coherence of examples in~\cite{BGP} is still unknown.

\medskip

\noindent{\it Acknowledgments.}
J. Zhang would like to thank  Institut de Math\'ematiques de Bourgogne for hospitality.

\section{Preliminary}
In this section, we   collect the notions and the known results used in this paper.
\subsection{Dynamical coherence}~\label{s.dynamical coherent}
Given a partially hyperbolic diffeomorphism $f$, one says that $f$ is \emph{dynamically coherent}, if there exist invariant foliations $\cF^{cs}$ and $\cF^{cu}$ tangent to $E^s\oplus E^c$ and $E^c\oplus E^u$ respectively.
When $f$ is dynamically coherent, it naturally induces the center foliation by taking the intersection of $\cF^{cs}$ and $\cF^{cu}$.

For partially hyperbolic diffeomorphisms, the strong stable and strong unstable bundles are always integrable, and they are integrated into  unique $f$-invariant foliations which will be called strong foliations, see ~\cite{HPS}. For the center bundle, the situation is more delicate; even in one-dimensional center case, there might not exist center foliations, see the examples in ~\cite{HHU4} and ~\cite{BGHP}.

Recall that $f$ has topologically neutral center if   for any $\e>0$, there exists $\delta>0$ such that for any $C^1$-path $\gamma$ tangent to $E^c$ of length bounded by $\delta$,   the length of $f^n(\gamma)$ is bounded by $\e$ for any $n\in\mathbb{Z}$.  
\begin{theorem}[Theorem 7.5 in~\cite{HHU2}]~\label{thm.dynamical-coherence}
	Let $f$ be a $C^1$ partially hyperbolic diffeomorphism. Assume that $f$ has topologically neutral center, then $f$ is dynamically coherent. Furthermore, the center bundle is uniquely integrable.
\end{theorem}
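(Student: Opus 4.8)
The plan is to construct the foliations $\cF^{cs}$ and $\cF^{cu}$ by a graph--transform argument and to invoke topological neutrality precisely at the step where one must exclude the ``branching'' behaviour responsible for the non--dynamically--coherent examples of \cite{HHU4} and \cite{BGHP}. Since the strong foliations $\cF^s$ and $\cF^u$ always exist (this is the part due to \cite{HPS} recalled above), the only real task is to integrate $E^s\oplus E^c$, $E^c\oplus E^u$ and $E^c$ itself.

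First I would fix an adapted Riemannian metric and a thin, strictly $Df$--invariant cone field $\mathcal C^{cs}$ around $E^s\oplus E^c$ (its invariance comes from the domination inequalities). For a point $x$ and each $n$, choose a small embedded disk $D_n\ni f^{-n}(x)$ of dimension $\dim(E^s\oplus E^c)$, tangent to $\mathcal C^{cs}$ and transverse to $E^u$, and push it forward by $f^n$. Cone invariance keeps $f^n(D_n)$ tangent to $\mathcal C^{cs}$, while the uniform contraction of $E^s$ together with domination makes the family equicontinuous on a fixed small ball around $x$; a subsequence converges $C^0$ to a disk $\mathcal W^{cs}_{loc}(x)$, and since the cone can be taken arbitrarily thin and is strictly invariant, the limit is genuinely tangent to $E^s\oplus E^c$. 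This is the classical construction of a \emph{locally invariant} center--stable manifold; the mirror construction with $f^{-1}$ and a thin cone around $E^c\oplus E^u$ yields $\mathcal W^{cu}_{loc}(x)$. At this stage these local manifolds are not known to be unique, and distinct ones through nearby points might cross -- this is exactly what must be ruled out, and where the hypothesis is used.

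The heart of the proof is to use topological neutrality to forbid such crossings. Suppose $\mathcal W^{cs}_{loc}(x)$ and $\mathcal W^{cs}_{loc}(y)$, with $y$ close to $x$, intersect without coinciding near a point $z$. Inside each of these manifolds the strong stable foliation $\cF^s$ restricts to a genuine sub--foliation, and the center bundle is complementary to it; using the $\cF^s$--holonomy one straightens the picture and extracts a short $C^1$ center--path $\sigma$ realising the ``gap'' between the two manifolds, whose two endpoints lie on the \emph{same} strong stable leaf. Now iterate backward: along $f^{-n}$ the strong stable distance between the endpoints of $f^{-n}(\sigma)$ grows exponentially, so the transverse ``lens'' bounded by $f^{-n}(\mathcal W^{cs}_{loc}(x))$, $f^{-n}(\mathcal W^{cs}_{loc}(y))$ and the two strong stable arcs becomes arbitrarily long in the $E^s$ direction; but topological neutrality keeps every center--path one reads off inside this lens (joining the two sides) uniformly short. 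A geometric estimate -- a bounded center--path cannot span an arbitrarily long strong--stable segment while staying inside a region that is being collapsed in $E^s$ -- gives the contradiction. Hence the local center--stable manifolds are pairwise coherent, so they are the plaques of an $f$--invariant foliation $\cF^{cs}$; symmetrically one gets $\cF^{cu}$, whence dynamical coherence, and the center foliation $\cF^c:=\cF^{cs}\cap\cF^{cu}$.

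Finally, for unique integrability of $E^c$: any $C^1$ curve tangent to $E^c$ is in particular tangent to $T\cF^{cs}$ and to $T\cF^{cu}$; once $\cF^{cs}$ and $\cF^{cu}$ are genuine foliations with $C^1$ leaves, such a curve stays in a single leaf of each (the set of parameters at which it has not yet left the initial leaf is open and closed), hence it lies in $\cF^{cs}(x)\cap\cF^{cu}(x)=\cF^c(x)$ and is therefore a reparametrised sub--arc of that leaf. I expect the genuine obstacle to be the third paragraph: turning ``two locally invariant center--stable manifolds that cross'' into a precise contradiction with topological neutrality -- controlling the geometry of the lens under backward iteration and checking that the center--paths one extracts really are $C^1$ center--paths to which the neutral hypothesis applies -- is where all the work lies. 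The rest is either classical (the graph transform, existence of the strong foliations) or formal (assembling the local manifolds into foliations, and deducing unique integrability).
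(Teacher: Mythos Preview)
The paper does not prove this theorem: it is quoted verbatim as Theorem~7.5 of the survey~\cite{HHU2} and used as a black box, so there is no in-paper argument to compare your proposal against.

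On the substance of your sketch, the strategy is the standard one and is correct in outline, but the key third paragraph has the geometry reversed. If two locally invariant center--stable manifolds through a common point $z$ differ, both are tangent to $E^s\oplus E^c$ at $z$, so their separation near $z$ is in the $E^u$ direction; moreover both contain the entire strong stable leaf $\cF^{ss}(z)$ (by unique integrability of $E^s$), so there is no short center arc ``with endpoints on the same strong stable leaf'' that measures the gap between them. The contradiction is obtained by iterating \emph{forward}: any short $C^1$ arc tangent to $E^{cs}$ has all forward images of uniformly bounded length (the $E^s$ component contracts, the $E^c$ component is controlled precisely by topological neutrality), while the $E^u$--separation between its endpoint and the second $cs$--manifold grows exponentially. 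This is Brin's criterion for unique integrability of $E^{cs}$. Your backward iteration is the mirror argument for $E^{cu}$; as written it does not yield a contradiction for $\cF^{cs}$.
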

\begin{Remark}
	It is worth to notice that in Theorem 7.5~\cite{HHU2}, the \emph{plaque expansiveness} is also obtained (in this paper, we will not use this fact).
\end{Remark}

To end this subsection,   we  show that there exists  a transitive partially hyperbolic diffeomorphism whose center is  topologically  neutral but  not neutral. 

\begin{proposition}~\label{p.non-neutral-example}
	There exists a transitive partially hyperbolic diffeomorphism on $\TT^3$ with one dimensional topologically neutral center but not neutral.  
\end{proposition}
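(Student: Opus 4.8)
The plan is to start from a linear Anosov automorphism $A$ on $\TT^2$ and form the trivial skew product $F=A\times\Id$ on $\TT^3=\TT^2\times\SS^1$, which is partially hyperbolic with $1$-dimensional center $E^c$ tangent to the $\SS^1$-fibers, neutral (indeed isometric) along the center, and whose center foliation is by circles. This base dynamics is of course not transitive. I would then modify $F$ inside a small flow-box by composing with a diffeomorphism $g$ supported in a small ball $U=V\times J$ (with $V\subset\TT^2$, $J\subset\SS^1$ small intervals), chosen so that $g$ preserves each center circle $\{(p,\cdot)\}$ and acts on it as a circle diffeomorphism with exactly two fixed points, an attractor and a repeller, depending continuously (and trivially outside $V$) on $p$. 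The new map $f=g\circ F$ is a small $C^1$-perturbation of $F$, hence still partially hyperbolic with $1$-dimensional center tangent to the $\SS^1$-fibers; by Theorem~\ref{thm.dynamical-coherence} it is dynamically coherent, the center foliation still being the fibration by circles.

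The key points to verify are then \emph{topological neutrality}, \emph{failure of neutrality}, and \emph{transitivity}. For topological neutrality one uses that the center foliation is a fibration by compact circles of uniformly bounded length: for any center path $\sigma$ of small length $\delta$, contained in a single center circle $C_p$ of length $L(p)$, every iterate $f^n(\sigma)$ lies in the center circle $C_{A^n p}$ and is again a subarc of it, so $\mathrm{length}(f^n(\sigma))\le \sup_p L(p)<\infty$; a compactness/continuity argument (the center circles vary continuously, and $f$ acts on the circle bundle covering $A$) upgrades this to: $\delta$ small $\Rightarrow$ $\mathrm{length}(f^n(\sigma))$ uniformly small for all $n\in\ZZ$. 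For failure of neutrality, note that on the perturbed circle $C_p$ with $p$ in the support, the point pushed toward the attracting fixed point has $\|Df^n|_{E^c}\|\to 0$ as $n\to+\infty$, so $\inf_{x,n}\|Df^n|_{E^c(x)}\|=0$ and no constant $C$ as in the definition of ``neutral along center'' can exist. For transitivity I would invoke Proposition~\ref{p.omega}: it suffices to exhibit a point $y$ whose $\omega$-limit set has non-empty interior. Here one designs $g$ so that, on the center circle over a suitable periodic or dense base orbit, the dynamics sweeps an arc; combined with the transitivity of $A$ in the base and the strong-(un)stable foliations (whose leaves project onto the stable/unstable leaves of $A$ and hence onto dense sets in $\TT^2$), one shows the unstable saturation of a center arc is dense, so some orbit accumulates on an open set; then Proposition~\ref{p.omega} gives transitivity.

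Concretely, the cleanest way to get the open $\omega$-limit set is: pick $p_0\in\TT^2$ with dense forward $A$-orbit; the perturbation is arranged so that the center circle $C_{p_0}$ is \emph{not} in the support of any $g$ iterate except finitely often, yet some nearby orbit, after passing through the support, is displaced along the center and then spreads by the unstable holonomy. Alternatively, and perhaps more robustly, one chooses the support and $g$ so that $f$ has a partially hyperbolic ``blender''-like mechanism forcing accessibility on an open set; but since we only need one $\omega$-limit set with interior, the elementary argument above should suffice. I expect the \textbf{main obstacle} to be precisely this transitivity verification: ensuring that the local push along the center, which is what destroys neutrality, also genuinely spreads (via the strong foliations) to fill an open set, rather than getting trapped — one must choose the size and position of $U=V\times J$ and the profile of $g$ along the fibers carefully so that the attracting region of the center dynamics is not invariant under the whole diffeomorphism. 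The partial hyperbolicity, dynamical coherence, and the two neutrality claims are comparatively routine once the construction is pinned down.
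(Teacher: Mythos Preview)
Your construction has a genuine gap at the very point you flag as ``comparatively routine'': topological neutrality fails. Introducing, on the center circles over the support $V$, a diffeomorphism $g_p$ with an attracting and a repelling fixed point is precisely what destroys topological neutrality, not just neutrality. Indeed, take any periodic point $q$ of $A$ whose orbit meets $V$ (such points are dense). Then some iterate $f^{k}$ restricted to the center circle $C_q$ is a composition of the maps $g_{A^i q}$, at least one of which is a nontrivial North--South type map supported in $J$; the resulting circle map still has a repelling fixed point (an endpoint of $J$), and a short arc near that repeller is stretched, under iteration of $f^{k}$, to an arc whose length approaches $|J|$. Hence for any $\e<|J|$ there is no $\delta$ making all iterates of $\delta$-short center arcs $\e$-short. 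Your ``compactness/continuity'' sentence only yields the trivial uniform bound $\sup_p L(p)$ on center-arc lengths; it cannot upgrade to equicontinuity of $\{f^n|_{E^c}\}$, which is what topological neutrality requires and which your local hyperbolicity along the fibers explicitly prevents.

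The paper's proof avoids this by a quite different and much shorter route: it takes a \emph{global} product $F=A\times h$, where $h\in\diff^1(S^1)$ is $C^0$-conjugate to an irrational rotation $R_\alpha$ but has unbounded derivative cocycle, i.e.\ $\inf_x\{\|Dh^{\pm n}(x)\|\}\to\infty$ (such $h$ exists by Theorem~B of \cite{BCW}). The $C^0$-conjugacy to a rotation immediately gives equicontinuity of $\{h^n\}$, hence topological neutrality of $F$ along the center; the unbounded derivatives give non-neutrality; and transitivity is a one-line consequence of $A$ being topologically mixing and $h$ being minimal. No appeal to Proposition~\ref{p.omega}, accessibility, or blender-type mechanisms is needed. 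The moral is that to get topologically-neutral-but-not-neutral, one should look for fiber dynamics that are \emph{topologically} a rotation (so equicontinuous) yet metrically wild --- not for fiber dynamics with genuine attractors.
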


\proof[Proof of Proposition~\ref{p.non-neutral-example}]
Let  $R_{\alpha}$ be an irrational rotation on $S^1=\mathbb{R}/\mathbb{Z}$. As $R_\alpha$ has no periodic points, one can apply Theorem B in ~\cite{BCW} to get a $C^1$-diffeomorphism $h\in\diff^1(S^1)$ which is $C^1$-close enough to $R_\alpha$ such that
\begin{itemize}
	\item $$\lim_{n\rightarrow\infty}\inf_{x\in S^1}\big\{\|Dh^n(x)\|,\|Dh^{-n}(x)\| \big\}=\infty.$$
	\item $h$ is $C^0$-conjugate to $R_\alpha$.
\end{itemize} 
Let $A$ be a linear Anosov map on $\TT^2$,
then    $F:=A\times h$ is a partially hyperbolic diffeomorphism on $\TT^3$ with one-dimensional center. As $h$ is conjugate to a rotation, $F$ has topologically neutral but not neutral center bundle.  As $h$ is transitive and $A$ is topologically mixing (that is, for any open sets $U,V\subset\TT^2$, there exists  $N\in\mathbb{N}$ such that $A^n(U)\cap V\neq\emptyset$ for $n\geq N$), $F$ is transitive. 
\endproof

\subsection{Invariant foliations for partially hyperbolic diffeomorphisms with topologically neutral center}~\label{s.neutral center}

Let $f$ be a partially hyperbolic and dynamically coherent diffeomorphism, then one   has 
$$\cF^{ss}(\cF^c(x)):=\cup_{y\in\cF^c(x)}\cF^{ss}(y)\subset\cF^{cs}(x);$$ one says that the center stable foliation is \emph{complete} if  
$$\cF^{cs}(x)=\cF^{ss}(\cF^c(x)) \textrm{ for any $x\in M$}.$$

To our knowledge, it is still open if the center stable foliation is complete for all dynamically coherent partially hyperbolic diffeomorphisms. For the case where $f$ is partially hyperbolic diffeomorphism with one dimensional neutral center, it has been proved in ~\cite{Z} that  its invariant foliations are complete and the topology of the center stable leaves is described.
\begin{theorem}[Theorem A in \cite{Z}]~\label{thm.leaf-structure} Let $f$ be a partially hyperbolic diffeomorphism on a 3-manifold  with one dimensional  neutral center. Then one has that
	\begin{itemize}
		\item the center stable and center unstable foliations are complete;
		\item each leaf of center stable (resp. center unstable ) foliation is a plane, a cylinder or a M$\ddot{o}$bius band;
		\item a center stable (resp. center unstable) leaf is a cylinder or a M$\ddot{o}$bius band if and only if such center stable (resp. center unstable) leaf contains a compact center leaf.
	\end{itemize}
\end{theorem}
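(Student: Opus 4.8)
By Theorem~\ref{thm.dynamical-coherence}, $f$ is dynamically coherent with uniquely integrable center, so we have the foliations $\cF^{cs}$, $\cF^{cu}$, $\cF^c$, $\cF^{ss}$, $\cF^{uu}$; the center-unstable statements follow from the center-stable ones applied to $f^{-1}$, so I only discuss $\cF^{cs}$. Fix a leaf $L=\cF^{cs}(x)$: it is a surface carrying the two transverse one-dimensional foliations $\cF^c|_L$ and $\cF^{ss}|_L$. The plan is (1) to prove completeness, $L=\cF^{ss}(\cF^c(x))$; (2) to rule out compact center-stable leaves by an area-contraction argument; and (3) to pass to the universal cover of $L$ and, using the uniform contraction of $\cF^{ss}$ together with the neutrality of $\cF^c$, to determine $\pi_1(L)$ and its relation to the presence of a compact center leaf.

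For completeness, the set $U:=\cF^{ss}(\cF^c(x))$ is open in $L$ by the local product structure of the two transverse foliations, so the point is that $U$ is closed in $L$. Given $z_n\in U$ with $z_n\to z\in L$, write $z_n\in\cF^{ss}(y_n)$ with $y_n\in\cF^c(x)$; a priori $y_n$ could run off to infinity along $\cF^c(x)$, or the strong-stable arc joining $y_n$ to $z_n$ could degenerate. But since $z_n\to z$ inside the leaf $L$, these strong-stable arcs stay uniformly bounded in length; iterating forward by $f^N$ contracts their length uniformly to $0$, while by neutrality the center distance from $y_n$ to $x$ is multiplied only by a bounded factor, so for $N$ large the pair $(f^Ny_n,f^Nz_n)$ lies below the size of the local product structure, where the strong-stable holonomy is defined; reading this off at scale $f^N$ and pulling back by $f^{-N}$ gives $z\in U$. (Equivalently, one shows that the strong-stable holonomy between any two center leaves inside $L$ is a globally defined homeomorphism.) This is the first essential use of the neutral hypothesis: it is what prevents the strong-stable holonomy domain from failing to close up.

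Next, \emph{no center-stable leaf is compact}: if $L'$ were one, then $f^N(L')$ has area $\int_{L'}J_y\,dy$ where $J_y$ is the Jacobian of $Df^N$ on $T_yL'=E^s(y)\oplus E^c(y)$, and up to the uniformly bounded angle between the bundles $J_y\le\|Df^N|_{E^s(y)}\|\cdot\|Df^N|_{E^c(y)}\|$; since $E^s$ is uniformly contracted and $E^c$ is neutral, $J_y\to 0$ uniformly in $y$, so the area of $f^N(L')$ tends to $0$ --- contradicting the fact that any compact leaf of a codimension-one foliation contains a full plaque and so has area bounded below by a constant depending only on $f$. Hence every center-stable leaf is a non-compact surface. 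Using completeness, $L=\cF^{ss}(\cF^c(x))$, and every strong-stable leaf is homeomorphic to $\R$ (a compact one would be shortened by a factor tending to $\infty$). Pass to the universal cover $\tilde L$: it is simply connected, hence --- admitting a line foliation --- planar, and carries two transverse line foliations; any center line meets any strong-stable line at most once, since otherwise an innermost pair of arcs between two intersection points would bound a foliated bigon, impossible for a singularity-free foliation by a Poincar\'e--Bendixson index count. Combining this with the lift of completeness identifies $\tilde L$ with $\R^2$, the two foliations becoming the horizontal and vertical ones, so $\pi_1(L)$ acts on $\R^2$ freely, properly discontinuously, preserving the product structure.

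If this action of $\pi_1(L)$ fixes a center leaf, then $\pi_1(L)$ injects into $\homeo(\R)$ as a discrete free group, hence $\pi_1(L)=1$ --- $L$ is the plane, all center leaves non-compact --- or $\pi_1(L)\cong\Z$ --- $L$ is a cylinder or a M\"obius band, according to whether a generator preserves the transverse orientation, and the invariant center leaf is a circle. It remains to exclude the case in which $\pi_1(L)$ moves center leaves around with no common fixed point; here one uses the dynamics once more: with $f$ contracting $\cF^{ss}$ uniformly and distorting center lengths only by a bounded factor, such a configuration would force either a compact strong-stable leaf or a closed transversal to $\cF^c$ inside $L$ meeting every center leaf, and both are excluded by the same mechanism. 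For the converse in the last item, if $L$ is a cylinder or a M\"obius band then $\pi_1(L)\cong\Z$, and if all its center leaves were non-compact the generator would act on the center-leaf space without a fixed point, producing exactly such a forbidden closed transversal. I expect this last point --- the dynamical control of the deck group $\pi_1(L)$ --- to be the main obstacle: completeness together with general foliation theory only yields $\tilde L\cong\R^2$ with product foliations and leaves $\pi_1(L)$ a priori an arbitrary free group acting on it; it is precisely the interplay between the uniform contraction of the strong-stable foliation and the neutrality of the center that collapses $\pi_1(L)$ to $\{1\}$ or $\Z$ and ties the $\Z$ case to the existence of a compact center leaf.
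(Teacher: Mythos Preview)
This theorem is quoted from~\cite{Z} and is not proved in the present paper; the only argument given here is the sketch of the completeness statement (reproduced for the topologically neutral case as Proposition~\ref{p.complete}), together with the Remark that the second and third items are consequences of completeness. So your items (2) and (3), whatever their merits, cannot be compared to anything in this paper beyond that one-line remark; I will only address completeness.

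Your completeness argument has a genuine gap, and the paper's route is different. You set $U=\cF^{ss}(\cF^c(x))$, take $z_n\in U$ with $z_n\to z\in L$, write $z_n\in\cF^{ss}(y_n)$ with $y_n\in\cF^c(x)$, and then assert that ``since $z_n\to z$ inside the leaf $L$, these strong-stable arcs stay uniformly bounded in length''. This is exactly the point at issue and is not justified: convergence of the $z_n$ in the leaf says nothing about the length of the strong-stable arc joining $z_n$ back to $\cF^c(x)$; in the very scenario you are trying to exclude, $y_n$ escapes to infinity along $\cF^c(x)$ and those arcs get arbitrarily long. Your subsequent forward iteration then does not give a uniform contraction, and the argument does not close.

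The paper (following~\cite{Z}) argues by contradiction in a different way. Assuming $U\subsetneq L$, it invokes Proposition~1.3 of~\cite{BW} to produce a strong stable leaf $\cF^{ss}(y_0)\subset L$ disjoint from $U$ but joined to $U$ by arbitrarily short center paths $\sigma$. Iterating such a $\sigma$ \emph{forward}, one endpoint (the one in $\cF^{ss}(\cF^c(x_0))$) is driven onto $\cF^c(f^n(x_0))$, while the other stays on $\cF^{ss}(f^n(y_0))$, which remains uniformly far from $\cF^c(f^n(x_0))$; hence the center path $f^n(\sigma)$ becomes arbitrarily long, contradicting (topological) neutrality. The key input you are missing is precisely this Bonatti--Wilkinson ``boundary leaf'' lemma, which replaces your unjustified boundedness claim by a concrete short center segment whose forward iterates must stretch.
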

\begin{Remark}
	Indeed, the second and the third items are the consequences of completeness of center stable (resp. center unstable) foliations. 
\end{Remark}

According to  Theorem~\ref{thm.leaf-structure},  in the case where there is no compact center leaves, the center stable and center unstable leaves are planes, and in this case one can know which manifold supports such partially hyperbolic diffeomorphism by the following result:
\begin{theorem}[Theorem 3 in \cite{R}]~\label{thm.rosenberg}
	Let $M$ be a closed 3-manifold. Assume that there exists a $C^0$-foliation on $M$ whose leaves are all planes, then $M$ is $\mathbb{T}^3.$
\end{theorem}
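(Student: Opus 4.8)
Let me describe the route I would follow; the statement is a classical theorem of Rosenberg.

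\textbf{First reductions via Novikov.}
Since a plane is non\-compact, $\cF$ has no compact leaf, and in particular no Reeb component (the boundary leaf of a Reeb component is a torus). By Novikov's compact\-leaf theorem, the absence of a Reeb component forces $\pi_{2}(M)=0$, makes the inclusion of every leaf $\pi_{1}$\-injective (here vacuous), makes $\pi_{1}(M)$ infinite, and makes $\cF$ taut. Combined with the fact that a closed $3$\-manifold with $\pi_{1}$ infinite and $\pi_{2}=0$ has acyclic simply connected (hence contractible) universal cover, this already shows $M$ is aspherical.

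\textbf{Universal cover and Palmeira's theorem.}
Next I would pass to the universal cover $p\colon\widetilde M\to M$ with the lifted foliation $\widetilde\cF$. Because every leaf $L$ of $\cF$ is simply connected, $p$ restricts to a homeomorphism from each connected component of $p^{-1}(L)$ onto $L$, so every leaf of $\widetilde\cF$ is a plane. By Palmeira's theorem on simply connected manifolds foliated by hyperplanes, $\widetilde M$ is homeomorphic to $\RR^{3}$ and $\widetilde\cF$ is conjugate to a product foliation $\cG\times\RR$, where $\cG$ is a foliation of $\RR^{2}$ by lines; in particular the leaf space $\cL:=\widetilde M/\widetilde\cF$ is a (a priori non\-Hausdorff) simply connected $1$\-manifold on which $\Gamma:=\pi_{1}(M)$ acts.

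\textbf{The crux and the conclusion.}
The heart of the argument is to show that $\cL$ is Hausdorff, hence $\cL\cong\RR$. Here one uses that $\Gamma$ acts on $\widetilde M\cong\RR^{3}$ freely, properly discontinuously and cocompactly: a non\-separated pair of leaves yields, via the corresponding branching of $\cG$ in $\RR^{2}$, a $\Gamma$\-invariant configuration (one must control the set of branch points of $\cL$ and the induced dynamics of $\Gamma$ near it) that is incompatible with $M=\widetilde M/\Gamma$ being compact. I expect this to be the main obstacle, and the place where ``all leaves are planes'' is genuinely used beyond mere asphericity. Granting $\cL\cong\RR$: the action of $\Gamma$ on $\cL$ is free, since if $g\neq\Id$ fixed a leaf $\widetilde L$ it would restrict to a non\-trivial deck transformation of the trivial covering $p|_{\widetilde L}\colon\widetilde L\to L$ (trivial because $L$ is simply connected), contradicting freeness on $\widetilde M$. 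As an orientation\-reversing homeomorphism of $\RR$ always has a fixed point, the $\Gamma$\-action on $\cL\cong\RR$ is by orientation\-preserving homeomorphisms, so by H\"older's theorem $\Gamma$ is abelian and torsion\-free; being finitely generated it is isomorphic to $\ZZ^{k}$. A free, properly discontinuous, cocompact action of $\ZZ^{k}$ on $\RR^{3}$ forces $k=3$ by cohomological dimension. Thus $M$ is a closed aspherical (hence irreducible, hence Haken) $3$\-manifold with $\pi_{1}(M)\cong\ZZ^{3}$, and therefore $M\cong\TT^{3}$ (e.g.\ by Waldhausen's theorem that closed Haken $3$\-manifolds are determined up to homeomorphism by their homotopy type).

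\textbf{On regularity.}
Since the foliation is only $C^{0}$, one needs the $C^{0}$ versions of Novikov's theorem and of Palmeira's theorem; both are available, so the scheme above applies without change.
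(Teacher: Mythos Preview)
The paper does not prove this theorem: it is quoted as a known result, with the Remark immediately following it explaining that Rosenberg proved it for $C^2$ foliations and that the $C^0$ case follows from work of Imanishi, as observed by Gabai. So there is no ``paper's own proof'' to compare against; the relevant content of the paper is simply the pointer to \cite{R}, \cite{I}, \cite{G}.

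Your outline is a reasonable modern route to the statement, but two points deserve comment. First, the step you yourself flag as ``the crux'' --- that the leaf space $\cL$ of the lifted foliation is Hausdorff --- is not argued; you only say that a non-separated pair of leaves would be ``incompatible with compactness'', which is exactly the hard part and needs a genuine argument. Second, your scheme is anachronistic relative to the cited sources: Palmeira's theorem postdates Rosenberg, and the $C^0$ extension indicated by the paper goes through Imanishi's Denjoy--Sacksteder theorem for codimension-one foliations \emph{without holonomy}. The point there is that a foliation by planes automatically has trivial holonomy (simply connected leaves), and the structure theory of such foliations on compact manifolds then forces the conclusion. That route bypasses the leaf-space Hausdorffness issue and is what the paper is actually invoking.
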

\begin{Remark} Theorem~\ref{thm.rosenberg} is first proved by H. Rosenberg~\cite{R} assuming that the foliation is $C^2$. Then observed by D. Gabai, the result holds for $C^0$-foliation due to ~\cite[Theorem 3.1]{I}, and the proof can be found  in \cite[Section 3]{G}.
\end{Remark}

The completeness of center stable and center unstable foliations can also be obtained in the topologically neutral case.
\begin{proposition}~\label{p.complete}
	Let $f$ be a $C^1$ partially hyperbolic diffeomorphism with topologically neutral center, then the center stable and center unstable foliations are complete. 
\end{proposition}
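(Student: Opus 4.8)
\textbf{Proof plan for Proposition~\ref{p.complete}.}
The goal is to show that for a $C^1$ partially hyperbolic diffeomorphism $f$ with topologically neutral center, we have $\cF^{cs}(x)=\cF^{ss}(\cF^c(x))$ for every $x$ (and the symmetric statement for $\cF^{cu}$, which follows by applying the result to $f^{-1}$). Since the inclusion $\cF^{ss}(\cF^c(x))\subset\cF^{cs}(x)$ is automatic from dynamical coherence (Theorem~\ref{thm.dynamical-coherence}), the real content is the reverse inclusion. The plan is to show that $\cF^{ss}(\cF^c(x))$ is an open and closed subset of $\cF^{cs}(x)$ in the leaf topology; since $\cF^{cs}(x)$ is connected, this forces equality. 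Openness is the easy part: inside a small foliation box adapted to the splitting $E^s\oplus E^c$ inside $\cF^{cs}$, every point of $\cF^{cs}(x)$ near a point $y\in\cF^{ss}(\cF^c(x))$ lies on a strong-stable plaque through some nearby center plaque, so it again belongs to $\cF^{ss}(\cF^c(x))$; this uses only local product structure and costs nothing.

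The heart of the argument is \emph{closedness}. Suppose $z_n\in\cF^{ss}(\cF^c(x))$ converge (in the leaf of $\cF^{cs}(x)$, hence also in $M$) to some $z\in\cF^{cs}(x)$; write $z_n\in\cF^{ss}(w_n)$ with $w_n\in\cF^c(x)$. Local product structure in $\cF^{cs}$ near $z$ gives, for large $n$, a point $w'_n\in\cF^c(z)$ with $w_n\in\cF^{ss}(w'_n)$ and $d^{ss}(w_n,w'_n)$ small, hence $z_n\in\cF^{ss}(w'_n)$. So it suffices to prove $\cF^c(z)\subset\cF^{ss}(\cF^c(x))$, i.e.\ to ``slide'' the center leaf of $x$ along strong stable holonomy onto the center leaf of $z$ — equivalently, to show that the strong stable holonomy between two nearby center leaves inside a $cs$-leaf extends to the whole leaf. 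The obstruction to this is the classical one: the holonomy map along $\cF^{ss}$ from $\cF^c(x)$ to $\cF^c(z)$ is \emph{a priori} only defined on a subinterval, and it could fail to be surjective because the $ss$-distance between corresponding center points blows up before one reaches the end of the target center leaf. This is exactly where topological neutrality enters. The key estimate: if $p\in\cF^c(x)$ and $q\in\cF^c(z)$ satisfy $q\in\cF^{ss}(p)$, then iterating forward by $f^n$ contracts $d^{ss}(f^n(p),f^n(q))\to 0$ uniformly; meanwhile, for a short center subpath $\tau\subset\cF^c(z)$ emanating from $q$ of center-length $\le\delta$, topological neutrality bounds the center-length of $f^n(\tau)$ by a fixed $\e$ for \emph{all} $n$, and (applying it to $f^{-n}$ as well, plus a standard graph-transform / local-product argument in the $cs$-leaf) one shows that the $ss$-holonomy from $f^n(p)$ is defined on all of $f^n(\tau)$ with uniformly small image. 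Pulling back by $f^{-n}$, the $ss$-holonomy from $p$ is defined on all of $\tau$. Since $\delta$ is uniform, one covers $\cF^c(z)$ by finitely many such center subpaths (if the leaf is compact) or exhausts it (if non-compact), concluding that the holonomy is defined on all of $\cF^c(z)$ and $\cF^c(z)\subset\cF^{ss}(\cF^c(x))$.

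Let me record the structure more precisely. \emph{Step 1:} Reduce, via local product structure in $cs$-leaves and connectedness, to proving: for all $x,z$ with $z\in\cF^{cs}(x)$ and $d_{cs}(x,z)$ small, one has $\cF^c(z)\subset\cF^{ss}(\cF^c(x))$. \emph{Step 2:} Set $A=\{t\in\cF^c(z) : t\in\cF^{ss}(\cF^c(x))\}$; show $A$ is relatively open (local product structure) and non-empty ($z\in A$). \emph{Step 3:} Show $A$ is closed in $\cF^c(z)$: take $t_k\in A\to t\in\cF^c(z)$; there are $s_k\in\cF^c(x)$ with $t_k\in\cF^{ss}(s_k)$; the uniform contraction of $\cF^{ss}$ under forward iteration plus topological neutrality of the center (controlling the lengths of $f^n$ of a fixed-size center neighborhood of $t$ in $\cF^c(z)$ for all $n\in\ZZ$) gives a uniform bound on $d^{ss}(s_k,s_{k'})$ and lets one pass to the limit to produce $s\in\cF^c(x)$ with $t\in\cF^{ss}(s)$, so $t\in A$. \emph{Step 4:} Conclude $A=\cF^c(z)$ by connectedness, hence $\cF^c(z)\subset\cF^{ss}(\cF^c(x))$, which by Step 1 gives completeness of $\cF^{cs}$; apply to $f^{-1}$ for $\cF^{cu}$.

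The main obstacle, and the step that requires genuine care, is \textbf{Step 3} — obtaining the \emph{uniform} control needed to extend $ss$-holonomy past a point where center leaves come close together. Topological neutrality is essential precisely here: without it the $ss$-distance between corresponding points on two center leaves could be unbounded along the leaf, the holonomy could be non-surjective, and $\cF^{ss}(\cF^c(x))$ need not be closed in $\cF^{cs}(x)$ (indeed in general dynamically coherent examples completeness can fail). The quantitative mechanism is: choose $\e$ small enough that $\e$-center-segments lie in foliation charts with uniform local product structure; let $\delta=\delta(\e)$ from the definition of topological neutrality; then for a center subpath of length $\le\delta$ in $\cF^c(z)$, all its $f^n$-images have center-length $\le\e$, so stay in charts, so the $ss$-holonomy from the corresponding iterated subpath of $\cF^c(x)$ is everywhere defined on it; pulling back by $f^{-n}$ extends the original holonomy. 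Everything else — the local product structure, openness, the reduction step — is routine partial-hyperbolicity bookkeeping.
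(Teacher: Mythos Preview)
Your approach is correct in substance and shares the same core mechanism as the paper --- forward iteration contracts strong-stable distances, topological neutrality keeps center lengths bounded, and local product structure then forces the $ss$-holonomy to be defined --- but the two are organized quite differently. The paper argues by contradiction: invoking Proposition~1.3 of \cite{BW}, it locates a boundary strong stable leaf $\cF^{ss}(y_0)\subset\cF^{cs}(x_0)$ disjoint from $\cF^{ss}(\cF^c(x_0))$ yet approached by arbitrarily short center paths $\sigma$ with one endpoint on $\cF^{ss}(y_0)$ and the other on $\cF^{ss}(\cF^c(x_0))$; iterating forward, the latter endpoint approaches $\cF^c(f^n(x_0))$ while the former stays uniformly away (since its $ss$-leaf never meets $\cF^c$), so $f^n(\sigma)$ is bounded below in length while $\sigma$ was arbitrarily short --- contradicting neutrality. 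Your route is a direct open/closed argument, extending the $ss$-holonomy along $\cF^c(z)$ in $\delta$-steps without invoking \cite{BW}; this is more self-contained but longer. The detailed argument in your second paragraph (iterate until $f^n(q)$ is close to $f^n(p)$, then $f^n(\tau)$ lies in a foliation chart around $f^n(p)$, so local product structure puts each point of $f^n(\tau)$ on a local $ss$-leaf meeting $\cF^c(f^n(x))$, and pull back) is exactly right and is the real content.

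One caution: your restatement in Step~3 is looser than your paragraph~2 argument and contains a phrase that does not parse --- ``a uniform bound on $d^{ss}(s_k,s_{k'})$'' makes no sense, since $s_k,s_{k'}\in\cF^c(x)$ are not on a common strong stable leaf. What you actually need (and what your earlier paragraph delivers) is not a compactness/limit argument on the $s_k$ at all, but rather the direct observation that for each point $r$ on a short center arc near $t$, one can find $n$ with $f^n(r)$ in a product chart around a point of $\cF^c(f^n(x))$, hence $\cF^{ss}(r)\cap\cF^c(x)\neq\emptyset$. Phrase Step~3 that way (or, equivalently, as the contrapositive: if $t\notin A$ then $\cF^{ss}(f^n(t))\cap\cF^c(f^n(x))=\emptyset$ for all $n$, forcing $f^n(t)$ to stay $\delta_1$-away from $\cF^c(f^n(x))$, while nearby $f^n(t_k)$ approach it --- contradicting neutrality of the short arc from $t$ to $t_k$) and the proof is complete.
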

The proof follows as the one of Theorem A in ~\cite{Z}. Here, we sketch the proof. 
\proof[Sketch of the proof]
By Theorem~\ref{thm.dynamical-coherence},  $f$ is dynamically coherent and the center is uniquely integrable. Furthermore, there exist $\delta_1>0$ and $\delta_2>0$ such that for any $x$,  if  $y\in\cF_{\delta_1}^{cs}(x)$, then $\cF^{ss}_{\delta_2}(y)$ intersects $\cF^c_{\delta_2}(x)$ into a unique point.  

If the center stable leaf is not complete, then there exists a point $x_0$ such that $\cF^{ss}(\cF^c(x_0))\subsetneq  \cF^{cs}(x_0)$. In this case, by Proposition 1.3 in ~\cite{BW}, there exists a strong stable leaf $\cF^{ss}(y_0)\subset\cF^{cs}(x_0)$ such that 
\begin{itemize}
	\item $\cF^{ss}(y_0)$ is disjoint from  $\cF^{ss}(\cF^c(x_0))$;
	\item there exists an arbitrarily short  center path $\sigma$ whose two endpoints are in $\cF^{ss}(y_0)$ and $\cF^{ss}(\cF^c(x_0))$ respectively. 
\end{itemize}
By iterating $\sigma$ forwardly, for $n$ large enough $f^n(\sigma)$ has one endpoint close enough to a point in $\cF^c(f^n(x_0))$ and the other endpoint   in $\cF^{ss}(f^n(y_0))$ which is uniformly away from $\cF^c(f^n(x_0))$ in this case, the length of $f^n(\sigma)$ can be arbitrarily large contradicting to the topologically  neutral property. 
\endproof 
\subsection{Previous classification results on $3$-manifolds}
In this section, we recall some  classification results of partially hyperbolic diffeomorphisms on $3$-manifolds which are used in this paper (we refer the readers to a survey ~\cite{HaPo1}  and references therein for more results on classification).

In ~\cite{BW}, the authors  classified certain transitive partially hyperbolic diffeomorphisms on $3$-manifolds. As we are in the setting of dynamical coherence, for simplicity, we will present  a weaker version of  Theorems 1 and 2 in~\cite{BW}.

\begin{theorem}~\label{thm.bonatti-wilkinson}
	Let $f$ be a $C^1$ partially hyperbolic diffeomorphism on a closed $3$-manifold $M$. Assume that $f$ is transitive and  dynamically coherent.
	\begin{itemize}
		\item If there exists a compact and invariant center leaf $L$ such that $W_{\delta}^s(L)\cap W_{\delta}^u(L)\setminus\{L\}$ contains a compact center leaf for some   $\delta>0$, then up to finite lifts, $f$ is $C^0$-conjugate to a skew-product;
		\item If there exists a compact and periodic center leaf $L$ such that every center leaf in $W^s(L)$ is   periodic under $f$, then there exist $n\in \NN$ and $c>0$ such that \begin{itemize}
			\item every center leaf is $f^n$-invariant;
			\item   for any  $x\in M$, the distance of $x$ and $f^n(x)$ on the center leaf is bounded by $c$;
			\item  the center foliation carries  a continuous flow $C^0$-conjugate to an expansive transitive flow.
		\end{itemize}
	\end{itemize}    
\end{theorem}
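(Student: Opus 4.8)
The plan is to treat the two items in parallel: in each case a ``good'' structure known only near the distinguished leaf $L$ is shown to spread to a non-empty open $f$-invariant $\cF^c$-saturated subset of $M$, hence to all of $M$ by transitivity, after which one is reduced to a classification problem either on the leaf space $M/\cF^c$ (a surface) or along $\cF^c$ (a flow).

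For the first item, I would first upgrade the hypothesis to the assertion that $L$ has a tubular neighborhood $U$ which is a union of compact center circles carrying a local product structure (for $x,y$ in a common center-unstable plaque of $U$, the local strong-stable leaf of $x$ meets the local center leaf of $y$, and symmetrically); this is a Reeb-stability type argument combining dynamical coherence with the presence of a second compact center leaf inside $W^s_\delta(L)\cap W^u_\delta(L)$. Let $\mathcal G$ be the set of points lying on a compact center leaf that admits such a trivialising neighborhood. By construction $\mathcal G$ is open and $\cF^c$-saturated, it is $f$-invariant because $f$ permutes center leaves and preserves the strong foliations and hence the local product structure, and it is non-empty since $L\subset\mathcal G$; transitivity then forces $\mathcal G=M$ --- this is exactly where transitivity is essential, a non-transitive system being allowed to have non-compact center leaves coexisting with a compact one, as in \cite{BPP}. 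Thus $\cF^c$ is a foliation of $M$ by circles with controlled holonomy, i.e.\ a Seifert fibration, and after a finite cover I may assume it is an honest circle bundle $p\colon M\to\Sigma$ over a closed surface, with $f$ respecting the fibration and descending to some $\bar f\colon\Sigma\to\Sigma$. The images of $E^s$ and $E^u$ under $Dp$ form a $D\bar f$-invariant uniformly hyperbolic splitting of $T\Sigma$ (the center direction, being dynamically bounded, does not enter the estimates), so $\bar f$ is Anosov; by the classical facts that a closed surface carrying an Anosov diffeomorphism is $\TT^2$ and that every Anosov diffeomorphism of $\TT^2$ is topologically conjugate to a linear hyperbolic automorphism (Franks, Newhouse), $\Sigma=\TT^2$ and $\bar f$ is conjugate to such an automorphism $A$. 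Lifting this conjugacy along $p$ and passing to one further finite cover to trivialise the bundle identifies $M$ with $\TT^2\times S^1$ and $f$, up to $C^0$-conjugacy, with a skew product $(x,\theta)\mapsto(Ax,\psi_x(\theta))$.

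For the second item, I would start from the completeness of the center-stable foliation in this setting (the argument behind Theorem~\ref{thm.leaf-structure} and Proposition~1.3 of \cite{BW}), which shows that $W^s(L)$ is saturated by center leaves, so the hypothesis says precisely that every center leaf meeting $W^s(L)$ is periodic. Let $\mathcal P$ be the set of periodic center leaves; it is $\cF^c$-saturated and $f$-invariant and contains $W^s(L)$. A propagation argument analogous to the one above --- using transitivity together with a local analysis of the center-stable and center-unstable foliations near periodic leaves and a semicontinuity argument for the period function --- then shows that $\mathcal P=M$ with globally bounded period, so that some $f^n$ preserves every center leaf. From here $x\mapsto d_{\cF^c}(x,f^n(x))$ is a continuous $f^n$-invariant function on the compact manifold $M$, hence bounded by some $c$. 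Passing to a finite cover so that $E^c$ is orientable with $f^n$-invariant orientation, pick any continuous flow tangent to $E^c$; its orbits are the center leaves, and $f^n$ moves points along each orbit by center-distance at most $c$. Uniform hyperbolicity of $f$ along $E^s\oplus E^u$ forces this flow to be expansive (two orbits staying uniformly close for all time would separate transversally under forward or backward iteration of $f^n$), and a dense $f$-orbit sweeps out a dense union of center leaves, i.e.\ a dense flow orbit, so the flow is transitive.

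The main obstacle, in both items, is the propagation step: passing from a property verified only near the single leaf $L$ to a global statement requires genuine control of the holonomy of the center foliation in order to make the ``good'' set open, before transitivity can be invoked. Beyond that, in the first item the passage from a Seifert fibration to a genuine circle bundle over $\TT^2$ carrying Anosov quotient dynamics (choosing the finite covers, excluding other base surfaces) is delicate, and in the second item the subtle points are upgrading ``each center leaf is periodic'' to ``with uniformly bounded period'' and then extracting a bona fide expansive flow rather than merely a $\ZZ$-action with hyperbolic transverse behavior.
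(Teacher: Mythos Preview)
This theorem is not proven in the paper: it is quoted, in a form adapted to the dynamically coherent setting, as Theorems~1 and~2 of Bonatti--Wilkinson~\cite{BW}, and is used throughout as a black box. There is therefore no in-paper proof to compare your proposal against.

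Your outline is broadly in the spirit of the original~\cite{BW} argument, and you correctly identify the propagation step as the heart of the matter in both items. One concrete gap is worth flagging, since it is exactly where the real work lies. In the first item you assert that $\mathcal G$ is open, $f$-invariant, $\cF^c$-saturated and non-empty, and that ``transitivity then forces $\mathcal G=M$''. But transitivity only guarantees that a non-empty open $f$-invariant set is \emph{dense}; it need not equal $M$ (think of the complement of a periodic orbit of a transitive Anosov flow). To conclude $\mathcal G=M$ you must also show $\mathcal G$ is closed --- equivalently, that the set of points on non-compact center leaves, or on compact leaves without a product neighborhood, is open. In~\cite{BW} this is handled by a careful local analysis using the local product structure to trap accumulation points of compact leaves, and it is genuinely the delicate step you allude to. The same issue recurs in the second item when you pass from ``every center leaf in $W^s(L)$ is periodic'' to ``every center leaf in $M$ is periodic with uniformly bounded period'': density of $\mathcal P$ is cheap, but closedness and the period bound require a separate argument (the period function is only lower semicontinuous a priori). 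Your final paragraph acknowledges these as obstacles, which is fair, but they are not peripheral technicalities --- they are the substance of the theorem.
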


Given  two partially hyperbolic and dynamically coherent diffeomorphisms  $f$ and $g$ on $M$ , one says that $f$ is \emph{leaf conjugate} to $g$,   if    there exists a homeomorphism $h\in \homeo(M)$ such that
for any $x\in M$
\begin{itemize}
	\item $h(\cF_g^c(x))=\cF^{c}_f(h(x))$;
	\item   $h\circ g(\cF_g^c(x))=f\circ h(\cF_g^c(x))$.
\end{itemize}

Each  $f\in\diff^1(\mathbb{T}^3)$ induces an action on the fundamental group of $\TT^3$: $f_*:\pi_1(\TT^3)\mapsto\pi_1(\TT^3)$ which is called the \emph{linear part of $f$}. 
\begin{theorem}[Theorem 1.3 in \cite{HaPo0}]~\label{thm.hammerlindl-potrie}
	Let $f$ be a dynamically coherent partially hyperbolic diffeomorphism on $\TT^3$, then $f$ is leaf conjugate to its linear part $f_*$.
\end{theorem}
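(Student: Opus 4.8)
The plan is to lift everything to the universal cover and build the leaf conjugacy there, then push it down to $\mathbb{T}^3$. Write $A=f_*$ for the linear part, viewed as an element of $\mathrm{GL}(3,\mathbb{Z})$; since $f$ is homotopic to $A$, any lift $\tilde f$ of $f$ to $\mathbb{R}^3$ satisfies $\sup_{x}\|\tilde f(x)-Ax\|<\infty$. A first step is to show that $A$ is itself partially hyperbolic with one-dimensional center, i.e. has three real eigenvalues with $|\lambda^s|<|\lambda^c|<|\lambda^u|$ (after possibly passing to an iterate and orienting the bundles): this comes from the fact that on the torus no invariant sub-bundle can have a badly distributed coarse direction, which forces the averaged directions of $E^s,E^c,E^u$ to be eigendirections of $A$, via a Brin-Burago-Ivanov-type coarse-geometric argument.

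The heart of the proof, and the step I expect to be the main obstacle, is to establish that the lifted foliations $\tilde{\cF}^s,\tilde{\cF}^c,\tilde{\cF}^u$ (together with $\tilde{\cF}^{cs},\tilde{\cF}^{cu}$) are \emph{quasi-isometrically embedded} in $\mathbb{R}^3$: the intrinsic leaf metric is uniformly comparable to the restriction of the ambient metric. For the strong stable and strong unstable foliations this is Brin's classical argument. For the center foliation one combines dynamical coherence with the uniform contraction/expansion along $E^s,E^u$ and the bounded-distance property of $\tilde f$ to rule out a center leaf drifting sublinearly away from an affine line; each leaf of $\tilde{\cF}^c$ then lies within uniformly bounded Hausdorff distance of a unique affine line $\ell$ in the $\lambda^c$-eigendirection of $A$, and similarly for the remaining foliations. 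This quasi-isometry is precisely what can fail on general manifolds, and controlling the center leaves is the delicate point.

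Granting quasi-isometry, one deduces a \emph{Global Product Structure} in $\mathbb{R}^3$: each leaf of $\tilde{\cF}^{cs}$ meets each leaf of $\tilde{\cF}^u$ in exactly one point, symmetrically for $\tilde{\cF}^{cu}$ and $\tilde{\cF}^s$, and inside a center-stable leaf each center leaf meets each strong stable leaf exactly once; the analogous statements hold for the linear foliations of $A$. The conjugacy is then built leaf by leaf: send the center leaf $\tilde L$ of $\tilde{\cF}^c_f$ homeomorphically onto the affine line $\ell=\ell(\tilde L)$ it shadows, making the assignment $\tilde L\mapsto\ell$ coherent across leaves by transporting through the product structure with the strong stable and strong unstable holonomies. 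A point-set argument using the product structure shows the resulting map $\tilde h\colon\mathbb{R}^3\to\mathbb{R}^3$ is a homeomorphism carrying $\tilde{\cF}^c_f$ to the linear center foliation of $A$.

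Finally I would verify equivariance: because the construction used only objects canonically attached to $f$ — the invariant foliations, the shadowing lines, the strong holonomies — all of which commute with the $\mathbb{Z}^3$-action, $\tilde h$ descends to a homeomorphism $h$ of $\mathbb{T}^3$ with $h(\cF^c_f(x))=\cF^c_A(h(x))$. That $h\circ f$ and $A\circ h$ agree on each center leaf follows once more from the bounded-distance property, since both lifts send a center leaf $\tilde L$ to the leaf of $\tilde{\cF}^c_A$ through $A\ell(\tilde L)$; the orientation subtleties needed to remove the auxiliary iterate and orientation choices are then routine. This yields the desired leaf conjugacy.
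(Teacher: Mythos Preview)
The paper does not prove this theorem at all: it is quoted verbatim as ``Theorem~1.3 in~\cite{HaPo0}'' and used as a black box (immediately afterwards, in the proof of Proposition~\ref{p.existence-of-compact-center-leaf}). There is therefore nothing in the paper to compare your proposal against.

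For what it is worth, your outline is a reasonable high-level sketch of the actual Hammerlindl--Potrie argument (Brin--Burago--Ivanov type control of coarse directions, quasi-isometry of the lifted foliations, global product structure in the universal cover, and construction of the leaf conjugacy via shadowing lines). The point you flag as the main obstacle --- quasi-isometry of the lifted \emph{center} foliation --- is indeed the substantive step, and your one-sentence justification (``dynamical coherence with the uniform contraction/expansion along $E^s,E^u$ and the bounded-distance property of $\tilde f$'') is where the real work is hidden; Hammerlindl's original arguments for this are considerably more delicate than your sketch suggests, and in the pointwise partially hyperbolic setting of~\cite{HaPo0} require additional care. But since the present paper only invokes the result, there is no expectation that you reproduce that proof here.
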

As a consequence, one has the following result. 
\begin{proposition}~\label{p.existence-of-compact-center-leaf}
	Let $f$ be a partially hyperbolic diffeomorphism on a closed 3-manifold $M$. Assume that $f$ has $1$-dimensional topologically neutral center, then $f$  has compact center leaves. 
\end{proposition}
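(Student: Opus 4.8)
The plan is to argue by contradiction: suppose $f$ has no compact center leaf.

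First I would pin down the ambient manifold. Since $f$ is topologically neutral, Proposition~\ref{p.complete} gives that $\cF^{cs}$ and $\cF^{cu}$ are complete; the plane/cylinder/M$\ddot{o}$bius trichotomy of Theorem~\ref{thm.leaf-structure} uses only completeness (as its Remark records), and the cylinder and M$\ddot{o}$bius cases force a compact center leaf inside the leaf. Having none, every center stable leaf is a plane, so $M$ carries a $C^0$ foliation by planes, and Rosenberg's theorem (Theorem~\ref{thm.rosenberg}) gives $M=\TT^3$.

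Next I would reduce to the Anosov case. By Theorem~\ref{thm.hammerlindl-potrie}, $f$ is leaf conjugate to $A:=f_*$, a partially hyperbolic matrix with one-dimensional center, hence with three distinct real eigenvalues $\lambda^s,\lambda^c,\lambda^u$ and $|\lambda^s|<1<|\lambda^u|$. If $\lambda^c=\pm 1$, then $E^c_A$ is a rational line, every center leaf of $A$ is a circle, and the leaf conjugacy (a homeomorphism carrying center leaves homeomorphically onto center leaves) produces a compact center leaf of $f$, a contradiction. Hence $\lambda^c\neq\pm1$; then the characteristic polynomial of $A$ is irreducible over $\QQ$ (its only possible rational root $\pm1$ cannot be the stable or unstable eigenvalue), so $A$ is Anosov and $E^c_A$ is irrational. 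Since the hypotheses are symmetric in $n\in\ZZ$, I may replace $f$ by $f^{-1}$ if necessary and assume $|\lambda^c|>1$.

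The core of the proof lives in the universal cover. Fix a lift $\tilde f$ of $f$ to $\RR^3$; as $f\simeq A$, the map $x\mapsto\tilde f(x)-Ax$ is $\ZZ^3$-periodic, hence bounded, so $\pi^c\circ\tilde f=\lambda^c\,\pi^c+(\text{bounded})$, where $\pi^c\colon\RR^3\to E^c_A\cong\RR$ is the linear projection parallel to $E^s_A\oplus E^u_A$. A one-line recursion then gives a radius $R_0$ with $|\pi^c(\tilde f^n(x))|\to\infty$ whenever $|\pi^c(x)|>R_0$. On the other hand the leaf conjugacy (its lift stays within a bounded distance of the linear map it induces on $\pi_1(\TT^3)$, a map commuting with $A$ and hence preserving the line $E^c_A$) forces every lifted center leaf of $f$ to remain within bounded Hausdorff distance of an affine line parallel to $E^c_A$; such a leaf is a properly embedded line along which $\pi^c$ is proper and unbounded. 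Now choose a fixed point $p$ of $f$ — it exists since the Lefschetz number is $\det(\Id-A)\neq0$ — and a lift $\tilde p$ fixed by $\tilde f$; the lifted center leaf $\tilde L$ through $\tilde p$ is $\tilde f$-invariant, so taking $\tilde q\in\tilde L$ with $|\pi^c(\tilde q)|>R_0$ we get that $\tilde f^n(\tilde q)$ stays on $\tilde L$ but escapes every compact subset of it, whence the intrinsic length in $\tilde L$ of the arc from $\tilde p$ to $\tilde f^n(\tilde q)$ tends to infinity. Projecting down (the covering restricts to a homeomorphism on $\tilde L$, a center leaf being simply connected) and letting $\sigma\subset M$ be the center arc from $p$ to the image of $\tilde q$, one cuts $\sigma$ into finitely many subarcs of center-length below the $\delta$ that topological neutrality associates to $\e=1$ and adds up by additivity of center length: $f^n(\sigma)$ has center-length at most the number of pieces for every $n$, whereas $f^n(\sigma)$ is the embedded center arc from $p$ to the image of $\tilde f^n(\tilde q)$, whose length equals the intrinsic distance just shown to diverge. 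Contradiction.

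I expect the last step to be the main obstacle. It relies on the coarse-geometric input that center leaves of $f$ lift to quasi-lines pointing in the direction $E^c_A$ — information bundled with the Hammerlindl--Potrie leaf conjugacy of Theorem~\ref{thm.hammerlindl-potrie} — and it has to be matched against the fact that topological neutrality controls only the images of \emph{uniformly} short center arcs, which is precisely what forces the subdivide-then-recombine bookkeeping on $\sigma$. Everything else (the trichotomy, Rosenberg, the eigenvalue discussion) is routine around the cited results.
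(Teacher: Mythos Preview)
Your argument is correct, and the overall architecture matches the paper's: assume no compact center leaf, use completeness and the trichotomy to get a foliation by planes, invoke Rosenberg to land on $\TT^3$, use Hammerlindl--Potrie to identify the linear part $A=f_*$ as Anosov, and then derive a contradiction on an $f$-invariant center leaf through a fixed point.

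Where you diverge is in the endgame. The paper does not go to the universal cover; instead it invokes the Franks-type semi-conjugacy $H\colon\TT^3\to\TT^3$ to $A$ (cited as~\cite{P}) together with Ures's description~\cite{U} of how $H$ behaves along center leaves (monotone, collapsing only countably many arcs), and concludes that $f|_{\cF^c(p)}$ is semi-conjugate to the affine map $t\mapsto\lambda^c t$ on $\RR$, which is incompatible with topological neutrality. Your route replaces this black box by an explicit coarse-geometric estimate: lifted center leaves are quasi-lines in the $E^c_A$ direction (from the leaf conjugacy and the fact that its action on $\pi_1$ commutes with $A$), $\pi^c\circ\tilde f=\lambda^c\,\pi^c+O(1)$ forces $|\pi^c|$ to blow up outside a bounded strip, and the subdivide-then-recombine bookkeeping turns this into unbounded center length. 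Your approach is more self-contained, trading the citations to~\cite{P} and~\cite{U} for a few lines of elementary estimates; the paper's is shorter once those references are granted. Both produce the same contradiction, so nothing essential is missing from your proposal.
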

\proof
Theorem~\ref{thm.dynamical-coherence} gives the dynamical coherence of $f$. 
Assume, on the contrary, that $f$ does not admit any compact center leaves. Then by Theorem~\ref{thm.leaf-structure}, all the center stable leaves are planes. By Theorem~\ref{thm.rosenberg}, one has that $M=\mathbb{T}^3$. By Theorem~\ref{thm.hammerlindl-potrie},  $f$ is leaf conjugate to its linear part $f_*:\pi_1(\TT^3)\mapsto\pi_1(\TT^3)$. Since the center stable leaves are planes, $f$ is isotopic to a linear Anosov map $A=f_*$ on $\mathbb{T}^3$, therefore, $f$ is semi-conjugate to $A$ (for a proof see for instance ~\cite{P}). Moreover, the semi-conjugacy sends the center leaves of $f$ to the center leaves of $A$, and on each leaf the semi-conjugacy  maps at most  countably many center segments of $f$ into points (see ~\cite{U}). Let $p$ be a fixed point of $f$, then $f|_{\cF^c(p)}:\cF^c(p)\mapsto\cF^c(p)$ is semi-conjugate to a contracting or expanding affine map  on $\RR$, which  contradicts to the neutral property on the center. 
\endproof

\subsection{ H\"older Theorem}
In this part, we recall the H\"older Theorem for actions on one dimensional manifolds. The action given by a group $\Gamma$ acting on a manifold $M$ is  a  \emph{free action} if each  non-trivial element in $\Gamma$ has no fixed points.
\begin{theorem}~\label{thm.holder}
	Let $\Gamma$ be a   group of orientation preserving homeomorphism acting freely on $\RR$ (resp. $S^1$). Then $\Gamma$ is isomorphic to a subgroup of translations on $\RR$ (resp. a subgroup of $SO(2)$).
\end{theorem}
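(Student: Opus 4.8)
The plan is the classical argument of H\"older, treating the real line first and then deducing the circle case from a central extension.

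\emph{The line.} Fix a base point $x_0\in\RR$ and declare $g\prec h$ iff $g(x_0)<h(x_0)$. Freeness makes this a total order on $\Gamma$, since $g(x_0)=h(x_0)$ forces $h^{-1}g$ to fix $x_0$, hence $g=h$. The crucial observation, which is where freeness really enters, is that $g\succ\mathrm{id}$ if and only if $g(x)>x$ for \emph{every} $x\in\RR$: if $g(x_0)>x_0$ but $g(x_1)\le x_1$ for some $x_1$, the intermediate value theorem produces a fixed point of $g$. Consequently $g\prec h$ iff $g(x)<h(x)$ for all $x$, and from this the order is seen to be bi-invariant (e.g. $g\succ\mathrm{id}$ and $h\in\Gamma$ give $hgh^{-1}(x)=h(g(h^{-1}x))>h(h^{-1}x)=x$ because $h$ is increasing).

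Next I would show the order is \emph{Archimedean}: if $g\succ\mathrm{id}$ then for every $h\in\Gamma$ there is $n\in\NN$ with $g^n\succ h$. Indeed, otherwise the increasing sequence $\big(g^n(x_0)\big)_n$ is bounded above by $h(x_0)$, hence converges to some $L\in\RR$, and continuity of $g$ yields $g(L)=L$, contradicting freeness. A group with a bi-invariant Archimedean total order embeds, order-preservingly, into $(\RR,+)$: fixing $g_0\succ\mathrm{id}$, the assignment
$$\phi(g)=\sup\Big\{\tfrac{p}{q}\ :\ p\in\ZZ,\ q\in\NN,\ g_0^{\,p}\preceq g^{\,q}\Big\}$$
is finite by the Archimedean property, is additive, and is strictly increasing, hence injective. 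Since the group of translations of $\RR$ is exactly $(\RR,+)$ acting by $x\mapsto x+t$, this settles the first case.

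\emph{The circle.} Let $\Gamma$ act freely on $S^1=\RR/\ZZ$ by orientation-preserving homeomorphisms, and let $\widetilde\Gamma\subset\homeo_+(\RR)$ be the group of all lifts of all elements of $\Gamma$; it sits in a central extension $1\to\langle T\rangle\to\widetilde\Gamma\to\Gamma\to1$ with $T(x)=x+1$. The group $\widetilde\Gamma$ acts \emph{freely} on $\RR$: a fixed point of $\tilde g\in\widetilde\Gamma$ projects to a fixed point of $g\in\Gamma$, so $g=\mathrm{id}$, hence $\tilde g=T^n$, and $T^n$ is fixed-point free unless $n=0$. By the first case there is an injective homomorphism $\phi\colon\widetilde\Gamma\to(\RR,+)$; since $T\neq\mathrm{id}$ we have $\phi(T)\neq0$, so after rescaling we may assume $\phi(T)=1$ and thus $\phi(\langle T\rangle)=\ZZ$. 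Therefore $\phi$ descends to $\bar\phi\colon\Gamma\to\RR/\ZZ$, which is injective: $\bar\phi(g)=0$ means $\phi(\tilde g)\in\ZZ=\phi(\langle T\rangle)$, whence $\tilde g\in\langle T\rangle$ by injectivity of $\phi$, i.e. $g=\mathrm{id}$. As $SO(2)\cong\RR/\ZZ$, this finishes the proof.

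The only genuinely non-formal ingredient --- the main obstacle --- is the combination of the Archimedean property with the construction of $\phi$ from a bi-invariant Archimedean order; this is the heart of H\"older's theorem and requires a little care to check that the supremum defining $\phi(g)$ is finite and that $\phi$ is a homomorphism (both directions of the Archimedean estimate are used). Everything else --- bi-invariance of the order coming from freeness, and the central-extension bookkeeping in the circle case --- is routine.
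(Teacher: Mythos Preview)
Your argument is correct and is precisely the classical H\"older argument. The paper does not give its own proof of this theorem; it simply cites Navas~\cite{Na} (Propositions 2.2.28, 2.2.29 and Theorem 2.2.23), where essentially the same proof appears: one equips $\Gamma$ with a bi-invariant Archimedean total order via a base point and freeness, embeds into $(\RR,+)$ by the H\"older map, and handles $S^1$ by lifting to $\RR$. So there is no meaningful difference in approach to report.
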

The proof of Theorem~\ref{thm.holder}  can be founded in ~\cite{Na} (see Propositions 2.2.28 and 2.2.29,  and Theorem 2.2.23 therein).

\section{$\omega$-limit sets with non-empty interior}

The aim of this section is to prove  Proposition~\ref{p.omega}. 

\begin{lemma}~\label{l.strong-foliation-saturated} 
	Let 	$f$  be a $C^1$ partially hyperbolic diffeomorphism. Assume that there is a point $y\in M$  whose $\omega$-limit set $\omega(y)$   has non empty interior. 
	Then  $\omega(y)$ is saturated by 
	strong stable and strong unstable leaves. 
	
	Furthermore, if $f$  has  topologically neutral center bundle, then $\omega(y)$ is also saturated by center leaves.
\end{lemma}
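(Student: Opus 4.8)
\textbf{Proof plan for Lemma~\ref{l.strong-foliation-saturated}.}
The plan is to exploit the standard fact that the $\omega$-limit set of a point is always forward-invariant and closed, and then to combine invariance with the topological structure of the strong foliations and the topologically neutral hypothesis. First I would fix $z\in\omega(y)$ and note that there is a sequence $n_k\to\infty$ with $f^{n_k}(y)\to z$. Given a point $w\in\cF^{ss}(z)$, uniform contraction along $E^s$ implies $d(f^{n}(w'),f^{n}(z'))\to 0$ for nearby points on the same strong stable plaque; passing to a clever choice of iterates I would produce points $f^{m_k}(y)$ (built from the $n_k$ together with an extra time needed to contract the strong stable segment from $z$ to $w$ to size $o(1)$) converging to $w$. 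A symmetric argument using $f^{-n}$ and the uniform expansion of $E^u$ (equivalently, uniform contraction of $E^s$ for $f^{-1}$) along $\cF^{uu}$ handles the strong unstable saturation, using that $\omega(y)$ is forward-invariant so that once a point of $\omega(y)$ is reached its whole forward orbit stays in $\omega(y)$, and then re-approximating. This gives the first assertion; the care needed is purely in the bookkeeping of the two time-scales, so I would phrase it cleanly as: for any $\e>0$ and any $w$ on a local strong stable (or unstable) leaf of a point of $\omega(y)$, one can find an iterate of $y$ that is $\e$-close to $w$.

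For the ``furthermore'' clause I would use the hypothesis that $\omega(y)$ has non-empty interior, so fix an open set $U\subset\omega(y)$, together with the topologically neutral property. Let $z\in\omega(y)$ and let $\sigma$ be a short center arc through $z$; I want to show the whole of $\sigma$ lies in $\omega(y)$. The idea is: since $\omega(y)$ contains the open set $U$, and $f$ is transitive-like in the sense that forward orbits of $y$ are dense in $\omega(y)$, there is an iterate $f^{N}(y)$ landing in $U$; by forward invariance all $f^{n}(y)$ for $n\ge N$ that return near $z$ do so with nearby center plaques. Concretely, pick $\e>0$ and let $\delta>0$ be given by topological neutrality; choose a return time $n_k$ with $f^{n_k}(y)$ within $\delta$ of $z$ along with the local center plaque $\cF^c_\delta(f^{n_k}(y))$ being $\e$-close (in the $C^0$ sense on compact pieces) to $\cF^c_\delta(z)$, which is possible by continuity of the center foliation. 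Then pull back: the center arc $\sigma$ of length $\le\delta$ through $z$ is, up to $\e$, the image under $f^{n_k - m}$ of a center arc of length $\le\delta$ for a suitable earlier time; but by neutrality that earlier arc has length $\le\e$ too, hence sits in a small neighborhood, and chaining these approximations shows each point of $\sigma$ is a limit of iterates $f^{j}(y)$, $j\to\infty$, i.e. lies in $\omega(y)$.

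Alternatively — and this is probably the cleanest route — I would argue directly: the first part shows $\omega(y)$ is $\cF^{ss}$- and $\cF^{uu}$-saturated, and since $\omega(y)$ has non-empty interior, the accessibility-type argument (or rather, the local product structure $E^s\oplus E^c\oplus E^u$) shows that $\omega(y)$ being saturated by both strong foliations forces it to contain, for each of its points $z$, a whole neighborhood inside $\cF^{cs}(z)\cup\cF^{cu}(z)$ structures; then the only thing preventing $\omega(y)$ from being open is possible non-saturation along the center. To close that gap one uses neutrality exactly as in the sketch of Proposition~\ref{p.complete}: if a center arc $\sigma$ through $z\in\partial(\operatorname{Int}\omega(y))$ left $\omega(y)$, then combining with $\cF^{ss}$/$\cF^{uu}$-saturation one builds a short center path, some iterate of which connects the interior of $\omega(y)$ to its exterior while forced to have large length, contradicting topological neutrality.

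\textbf{Main obstacle.} The delicate point is the ``furthermore'' part: one must leverage that $\omega(y)$ has non-empty interior to get center-saturation, since strong-foliation saturation alone is automatic from hyperbolicity but does not by itself control the center direction. The technical heart is turning the topological neutrality into a statement that forbids a center arc through a point of $\omega(y)$ from exiting $\omega(y)$; I expect to mimic the completeness argument (iterate a short center segment forward, use that one endpoint stays controlled while the foliation geometry forces the length to blow up) and the bookkeeping of matching two independent time-scales — the approximation time $n_k$ and the contraction/neutrality time — will be where the real work lies.
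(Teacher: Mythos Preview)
Your argument for strong-foliation saturation has a genuine gap: you never use the non-empty interior hypothesis there, and you even say explicitly in the alternative route that ``strong-foliation saturation alone is automatic from hyperbolicity''. It is not. Take an Anosov diffeomorphism (or any partially hyperbolic $f$) with a hyperbolic fixed point $p$ and choose $y\in W^s(p)\setminus\{p\}$; then $\omega(y)=\{p\}$, which is certainly not $\cF^{ss}$- or $\cF^{uu}$-saturated. Your sketch ``contract the strong stable segment from $z$ to $w$ and combine with the $n_k$'' only produces iterates of $y$ near $f^N(w)$, not near $w$ itself, and no amount of bookkeeping fixes this without an extra ingredient. The symmetric $\cF^{uu}$ argument via $f^{-n}$ has the same defect.

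The paper's proof shows what that extra ingredient is: from $\operatorname{Int}(\omega(y))\neq\emptyset$ one first observes $y\in\operatorname{Int}(\omega(y))$, hence $f|_{\omega(y)}$ is a transitive homeomorphism, so there is $x\in\operatorname{Int}(\omega(y))$ with $\alpha(x)=\omega(x)=\omega(y)$. Now a fixed ball $B_{\delta_0}(x)\subset\omega(y)$ acts as a trap: for $z\in\cF^{ss}(x)$, forward iterates bring $f^n(z)$ within $\delta_0/2$ of $f^n(x)$, and recurrence of $x$ returns $f^N(x)$ into $B_{\delta_0/3}(x)$, so $f^N(z)\in B_{\delta_0}(x)\subset\omega(y)$ and invariance gives $z\in\omega(y)$. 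The same works for $\cF^{uu}(x)$ using backward recurrence. Once you have saturation along the leaves through the single bi-recurrent point $x$, density of $\operatorname{Orb}(x)$ in $\omega(y)$ and closedness of $\omega(y)$ give saturation everywhere. The center case is then almost immediate: $\cF^c_{\delta_0/2}(x)\subset\omega(y)$, and topological neutrality propagates this along the dense orbit to a uniform $\eta_0$-center-neighborhood at every point of $\omega(y)$. Your center argument is heading in a similar direction but is over-complicated; the real repair you need is upstream, in the strong-saturation step.
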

\begin{proof}
	Notice that the interior $\Int(\omega(y))$ of $\omega(y)$ is  $f$-invariant. As the positive orbit of $y$ meets the interior of $\omega(y)$, one has $y\in\Int(\omega(y))$. 
	Thus the restriction of $f$ to $\omega(y)$ is a transitive homeomorphism, and therefore there is $x\in \Int(\omega(y))$ so that $\alpha(x)=\omega(x)=\omega(y)$. Since the orbit of $x$ is dense  in $\omega(y)$, it suffices to show that $\cF^{ss}(x),\cF^{uu}(x)$ are contained $\omega(y)$.

	Since $x$ is in interior of $\omega(y)$, there exists $\delta_0>0$ such that the $\delta_0$-neighborhood of $x$ in $M$ is contained in $\Int(\omega(y))$. For any point $z\in\cF^{ss}(x)$, there exists  $n_z\in\NN$ such that $\ud(f^n(x), f^n(z))<\delta_0/2$ for any $n\geq n_z$. As $x$ is recurrent, there exists an integer $N>n_z$ such that $f^N(x)$ is in the $\delta_0/3$-neighborhood of $x$. Therefore $f^N(z)\in \Int(\omega(y))$. By the $f$-invariance of $\Int(\omega(y))$, one has $z\in\Int(\omega(y))$. 	By the arbitrariness of $z$, one has   $\cF^{ss}(x)\subset \omega(y)$. 
	Analogously, one can show that $\cF^{uu}(x)\subset \omega(y)$.

	Now, we prove the `furthermore' part. Since  the $\delta_0$-neighborhood of $x$ is contained in the interior of $ \omega(y)$, one has that  $\cF_{\delta_0/2}^c(x)\subset \omega(y)$. As the forward orbit of $x$ is dense in $ \omega(y)$, by the topologically neutral property, there exists $\eta_0>0$ such that for any point $z\in  \omega(y)$, one has that $\cF^c_{\eta_0}(z)$ is contained in $\omega(y)$. By the arbitrariness of $z$, one has that  $\cF^c(x)\subset  \omega(y)$ which by the density of the orbit of $x$ implies that $\omega(y)$ is saturated by center leaves.  
\end{proof}

\begin{proof}[Ending the proof of Proposition~\ref{p.omega}]
By 	Lemma~\ref{l.strong-foliation-saturated}, the set  $\omega(y)$ is saturated by strong foliations and center foliation.
	Any set  which is saturated by the $3$ foliations $\cF^{ss}$, $\cF^{uu}$ and $\cF^c$ is open. 
	As $\omega(y)$ is compact, one gets $\omega(y)=M$ as $M$ is assumed to be connected, concluding. 
\end{proof}

\section{Existence of invariant center metric : Proof of Theorem~\ref{thm.main-metric}}~\label{s.center-flow}

Throughout this section, we   assume that $f$ is a $C^1$ partially hyperbolic diffeomorphism on a closed connected manifold $M$ with one dimensional topologically neutral center. By Theorem~\ref{thm.dynamical-coherence}, $f$ is dynamically coherent and the center bundle is uniquely integrable.

The  aim of this section is to show that if $f$ is transitive, one can define a center metric which is invariant under $f$. In this section, for notational convenience, we use $L$ or $L_i$ to denote a center leaf.
\subsection{Limit center maps}
\begin{definition}~\label{def:limit-center-map} Let $L_1$ and $L_2$ be center leaves of $f$.  Consider a map $F\colon L_1\to L_2$.  We say that $F$ is a \emph{limit center map} if there is a sequence $n_i\in\NN$ with $|n_i|\to \infty$ so that the sequence $f^{n_i}|_{L_1}$ pointwise converges to $F$. 
\end{definition}
\begin{Remark}~\label{r.uniform-convergence-of-center-limit}
	By   continuity of the center foliation and the topologically neutral property, in Definition~\ref{def:limit-center-map}, for each compact center path $\sigma\subset L_1$, the convergence of $f^{n_i}|_{\sigma}$ is uniform.
\end{Remark}

The next result gives the existence of limit center maps between certain center leaves.
\begin{lemma}\label{l.exists-limit-maps}
	Let $f$ be a partially hyperbolic diffeomorphism on $M$ with $1$-dimensional, topologically neutral center bundle.
	For any $x,y\in M$, if   $y\in\omega(x)$, then $\cL(L_x, L_y)\neq\emptyset.$
	
	More precisely, if the sequence $f^{n_t}(x)$ converges to $y$ then one can extract a subsequence  $\{m_j\}$ of the sequence $\{n_t\}$ so that the restriction $f^{m_j}|_{L_x}$ converges to a limit center map $F: L_x\to L_y$ with $F(x)=y$. 
\end{lemma}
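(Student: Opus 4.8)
The plan is to extract the limit center map by a diagonal argument combined with the topological-neutrality estimate. First I would fix a point $p$ on a center leaf $L_x$, say $p=x$, and choose a sequence $n_t$ with $f^{n_t}(x)\to y$; this is possible precisely because $y\in\omega(x)$. The key structural input is that $f$ is dynamically coherent with uniquely integrable center (Theorem~\ref{thm.dynamical-coherence}), so each $f^{n}$ maps the center leaf $L_x$ homeomorphically onto the center leaf $L_{f^n(x)}$, and the restriction $f^n|_{L_x}$ is a homeomorphism of one-dimensional manifolds. Since $f^{n_t}(x)\to y$, for $t$ large the point $f^{n_t}(x)$ lies in a small center plaque of $y$, and hence $L_{f^{n_t}(x)}=L_y$ for all large $t$ (leaves through nearby points on the same local center plaque coincide). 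Thus for large $t$ we genuinely have maps $f^{n_t}|_{L_x}\colon L_x\to L_y$.

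Next I would establish pointwise precompactness of this sequence of maps and then pass to a convergent subsequence. The mechanism is the topologically neutral hypothesis: fix any compact center path $\sigma\subset L_x$ starting at $x$; by the definition of topological neutrality the lengths of $f^n(\sigma)$ are uniformly bounded (in $n$) by some constant depending only on the length of $\sigma$. Hence the images $f^{n_t}(\sigma)$ all lie in a \emph{bounded} portion of the center leaf $L_y$ (a bounded center path based at the accumulation points of $f^{n_t}(x)$, which converge to $y$). A bounded set in a one-dimensional leaf has compact closure, so for each point $z\in L_x$ the sequence $f^{n_t}(z)$ ranges in a compact subset of $L_y$. Exhausting $L_x$ by a countable increasing union of compact center paths $\sigma_1\subset\sigma_2\subset\cdots$ and applying a Cantor diagonal extraction, I obtain a subsequence $\{m_j\}\subset\{n_t\}$ such that $f^{m_j}(z)$ converges for every $z$ in a countable dense subset of $L_x$, and in fact — by the uniform convergence on compact center paths noted in Remark~\ref{r.uniform-convergence-of-center-limit}, which follows from continuity of the center foliation plus neutrality — the convergence upgrades to uniform convergence on each compact center path. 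Denote the pointwise limit by $F\colon L_x\to L_y$; by construction $F(x)=\lim f^{m_j}(x)=y$, and $F$ is a limit center map in the sense of Definition~\ref{def:limit-center-map} (with $|m_j|\to\infty$ since $|n_t|\to\infty$ and we took a subsequence). This shows $\cL(L_x,L_y)\neq\emptyset$, and the ``more precisely'' clause is exactly what the diagonal extraction produces.

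The main obstacle I anticipate is the justification that the limit $F$ is a well-defined map $L_x\to L_y$ with values staying on a \emph{single} leaf $L_y$ and with the right domain — i.e. controlling where the images of points of $L_x$ go. This is where neutrality does the essential work: without it, iterates of a center path could stretch unboundedly and the images could run off to ``infinity'' in $L_y$ (or, if $L_y$ were compact, wrap around densely), and no pointwise limit would exist. The uniform length bound converts this into a genuine compactness statement. A secondary, more routine point is checking that the leaf containing $f^{m_j}(x)$ stabilizes to $L_y$; this is immediate from local product structure of the center foliation once $f^{m_j}(x)$ is within a foliation chart of $y$. I would also remark that at this stage one does not yet claim $F$ is a homeomorphism or continuous in any strong sense beyond being a pointwise (hence, by Remark~\ref{r.uniform-convergence-of-center-limit}, locally uniform) limit of homeomorphisms — the finer properties of limit center maps, such as injectivity or monotonicity, are presumably taken up in the subsequent development.
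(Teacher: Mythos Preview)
Your overall strategy---a diagonal extraction using topological neutrality to bound the lengths of iterated center paths---is the same as the paper's. However, there is a genuine conceptual error in the middle of your argument.

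You claim that since $f^{n_t}(x)\to y$, for large $t$ the point $f^{n_t}(x)$ lies on the center plaque of $y$, and hence $L_{f^{n_t}(x)}=L_y$. This is false. Proximity in the ambient manifold $M$ does not imply lying on the same center leaf: the center foliation has positive codimension, and in a foliation chart around $y$ the nearby points $f^{n_t}(x)$ will generically lie on \emph{different} local plaques, hence on leaves $f^{n_t}(L_x)\neq L_y$. Your ``secondary, more routine point'' about the leaf stabilizing via local product structure repeats the same confusion---local product structure says precisely that distinct nearby plaques parametrize distinct (local) leaves. Consequently your assertion that ``for large $t$ we genuinely have maps $f^{n_t}|_{L_x}\colon L_x\to L_y$'' is unjustified; the maps $f^{n_t}|_{L_x}$ land in the moving leaves $f^{n_t}(L_x)$, not in $L_y$, and the subsequent sentence ``the images $f^{n_t}(\sigma)$ all lie in a bounded portion of the center leaf $L_y$'' inherits the same error.

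The fix is to argue convergence in $M$ rather than within $L_y$. For $z\in L_x$, neutrality bounds the center distance between $f^{n_t}(x)$ and $f^{n_t}(z)$ uniformly in $t$; since $f^{n_t}(x)\to y$, the sequence $f^{n_t}(z)$ stays in a compact region of $M$ and hence has convergent subsequences in $M$. The limit then lies on $L_y$ by continuity of the center foliation: if $p_t\to y$ and $q_t$ is joined to $p_t$ by a center path of length at most $C$, any accumulation point of $q_t$ lies on a center path of length at most $C$ from $y$, hence on $L_y$. This is exactly how the paper phrases it (``$f^{n_k^{j+1}}(x_{j+1})$ converges to a point on $L_y$''), and with this correction your diagonal extraction goes through.
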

\begin{proof}
	Let $\{x_i\}_{i\in\NN}$ be a dense subset of $L_x$. Assume that for an integer $j\in\NN$,  one has subsequences $\{n_k^j\}\subset \{n_k^{j-1}\}$ of $\{n_t\}$  such that $f^{n_k^j}(x_l)$ converges to a point on $L_y$ when $k$ tends to infinity for each $l\leq j$. Now, by the topologically  neutral property along the one-dimensional center bundle and $f^{n_t}(x)$ tending to $y$, there exists a  subsequence $\{n_k^{j+1}\}\subset\{n_k^{j} \}$ such that $f^{n_k^{j+1}}(x_{j+1})$ converges to a point on $L_y$. Then the diagonal argument provides a subsequence $\{m_j\}$ of $\{n_t\}$ such that for each $l$, $f^{m_j}(x_l)$ converges to a point on $L_y$ when $j$ tends to infinity. The topologically  neutral property and the continuity of  center foliation give that $f^{m_j}|_{L_x}$ pointwise converges to a limit center map.
\end{proof}

Now, we give some basic properties of limit center maps.
\begin{lemma}\label{l.limit-maps} Let $f$ be a partially hyperbolic diffeomorphism. Assume that the center bundle is  $1$-dimensional and  topologically neutral. 
	Then one has 
 \begin{enumerate}
  \item The set of   limit center maps are uniformly topologically neutral in the following sense: for any $\e>0$ small, there exist $\delta>0$ and $\eta>0$ such that for any limit center map $F: L_1\to L_2$, and two points $x,y\in L_1$, one has 
  \begin{itemize}
  	\item[--] If $\ud^c(x,y)<\delta$, then $\ud^c(F(x), F(y))<\e,$  where $\ud^c(\cdot,\cdot)$ denotes the distance on center leaves; in particular, $F$ is continuous;
  	\item[--] If $\e_0/4>\ud^c(x,y)>\e$, then $\ud^c(F(x), F(y))>\eta,$ where $\e_0$ is a lower bound for the length of center leaves. 
  \end{itemize}
\item Each   limit center map $F$ from $L_1$ to $L_2$  is a local homeomorphism and is surjective;
  \item If $F: L_1\to L_2$ and $G: L_2\to L_3$ are limit center maps, then $G\circ F$ is a limit center map from $L_1$ to $L_3$.
  \item  If  $F\colon L \to L$ is a limit center map having a fixed point $x\in L$, then  
  \begin{itemize}
   \item[--] $F$ is the identity map of $L$ provided that  $F$ is orientation preserving;
   \item[--] $F$ is an involution on $L$ (i.e.   $F^2=\Id_L$) provided that  $F$ is orientation reversing. 
  \end{itemize}
 
 \item If   $F\colon L\to L$ is a limit center map, then $F$ is a homeomorphism. 
 \end{enumerate}
\end{lemma}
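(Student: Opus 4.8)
The plan is to prove the five assertions in the order (1), (2), (3), (5), (4): item (5) is needed for the circle case of (4), and item (1) is the engine for everything else. For \textbf{(1)}, observe that topological neutrality is symmetric in $f$ and $f^{-1}$ (it quantifies over all $n\in\ZZ$), so it yields, for each $\e>0$, a $\delta(\e)>0$ such that any $C^1$ center path of length $\le\delta(\e)$ has all of its $\ZZ$-iterates of length $\le\e$. The first bullet is then just equicontinuity of $\{f^n\}_{n\in\ZZ}$ along center leaves, which passes to the pointwise limit $F$; it also shows $F$ is uniformly continuous with a modulus independent of $F$. For the second bullet I argue by contradiction: if $F=\lim f^{n_i}|_{L_1}$ and $x,y\in L_1$ satisfy $\e<\ud^c(x,y)<\e_0/4$ but $\ud^c(F(x),F(y))\le\eta:=\delta(\e)/2$, then (by continuity of the center foliation, $\ud^c(f^{n_i}(x),f^{n_i}(y))\to\ud^c(F(x),F(y))$) for $i$ large the minimizing center arc $\tau_i\subset f^{n_i}(L_1)$ from $f^{n_i}(x)$ to $f^{n_i}(y)$ has length $\le\delta(\e)$; hence $f^{-n_i}(\tau_i)$ is a center arc in $L_1$ from $x$ to $y$ of length $\le\e$, contradicting $\ud^c(x,y)>\e$.

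For \textbf{(2)}: $F$ is continuous by (1); if $x\neq y$ with $\ud^c(x,y)<\e_0/4$, the second bullet of (1) applied with $\e=\ud^c(x,y)/2$ gives $F(x)\neq F(y)$, so $F$ is locally injective, hence (a continuous locally injective map of a $1$-manifold) locally strictly monotone, an open map, and a local homeomorphism. Its image is open; if $L_2\cong S^1$ it is also compact, hence all of $L_2$; if $L_2\cong\RR$ and the image were bounded on one side, choosing $x_n\to\infty$ with consecutive $\ud^c$-distances equal to $\e_0/8$ would force $\ud^c(F(x_n),F(x_{n+1}))\to 0$, contradicting (1). For \textbf{(3)}: write $f^{m+n}|_{L_1}=(f^{m}|_{f^{n}(L_1)})\circ(f^{n}|_{L_1})$ and fix an exhaustion $\sigma_1\subset\sigma_2\subset\cdots$ of $L_1$ by compact center arcs; for each $s$ first pick $j_s$ so that $f^{m_{j_s}}|_{F(\sigma_s)}$ is close to $G|_{F(\sigma_s)}$ (Remark~\ref{r.uniform-convergence-of-center-limit}), then $i_s$ with $|n_{i_s}|>s+|m_{j_s}|$ and $f^{m_{j_s}+n_{i_s}}|_{\sigma_s}$ within $1/s$ of $G\circ F|_{\sigma_s}$, using uniform convergence of $f^{n_i}|_{\sigma_s}$, continuity of the foliation, and uniform continuity of $f^{m_{j_s}}$; then $|m_{j_s}+n_{i_s}|\to\infty$ and $f^{m_{j_s}+n_{i_s}}|_{L_1}\to G\circ F$ pointwise. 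The same diagonal scheme shows that a pointwise limit of limit center maps is again a limit center map, a fact used below.

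For \textbf{(5)}: if $L\cong\RR$, $F$ is continuous and locally injective, hence strictly monotone, hence injective, and with surjectivity from (2) it is a homeomorphism. If $L\cong S^1$, $F$ is a covering of some degree $d\neq 0$; by (3) every $F^k$ is a limit center map and by (1) the family $\{F^k\}$ is equicontinuous, so Arzel\`a--Ascoli extracts $F^{k_s}\to G$ with $G$ again a limit center map, hence a covering of some degree $e\neq 0$; but uniform convergence makes $F^{k_s}$ homotopic to $G$ for $s$ large, so $d^{k_s}=e$ eventually, forcing $|d|=1$, i.e.\ $F$ is a homeomorphism. For \textbf{(4)}, let $F(p)=p$. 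If $F$ is orientation preserving, then by (5) it is a homeomorphism, of degree $1$ in the circle case, so lifting to $\RR$ (keeping a fixed point) we may assume $L\cong\RR$, $F$ increasing, $F(0)=0$; if $F\neq\Id_L$ and $F(y)\neq y$, the orbit $\{F^k(y)\}$ is strictly monotone and, by the equicontinuity in (1), bounded, hence converges to some $c\neq y$ with $F(c)=c$; extracting $F^{k_s}\to G$ gives a limit center map with $G(y)=c=G(c)$ and $y\neq c$, contradicting injectivity of $G$ (by (2)). Hence $F=\Id_L$. If $F$ is orientation reversing, then $F^2$ is an orientation-preserving limit center map (by (3)) fixing $p$, so $F^2=\Id_L$ by the previous case.

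The heart of the matter --- and the only step that is genuinely more than bookkeeping --- is the two-sided estimate in (1): equicontinuity is immediate from neutrality, but the non-collapsing lower bound, obtained by pushing a would-be collapsed short arc \emph{backward} under $f^{-n_i}$ and invoking neutrality for $f^{-1}$, is exactly what makes limit center maps local homeomorphisms and ultimately forces the rigidity in (4). Once (1) is available, the remainder is diagonal extractions, elementary $1$-manifold topology, and the reduction $S^1\to\RR$.
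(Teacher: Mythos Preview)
Your proof is correct, and for items (1)--(3) it is essentially the same as the paper's (the paper uses a countable dense subset where you use an exhaustion by arcs, and its second bullet of (1) is phrased as a sequential contradiction, but these are cosmetic). The genuine divergence is in the handling of (4) and (5). The paper proves (4) first, by a purely local argument: near a fixed point $p$, the two bullets of item (1) force $F$ to be the identity on a short interval with endpoint $p$ (a non-trivial nearby orbit would have to converge to a fixed point, collapsing a definite-length arc and violating the lower bound in (1)); it then deduces (5) in the circle case by noting that a covering of degree $|d|\ge 2$ has periodic points, so some $F^{2n}=\Id_L$ by (4), contradicting $|d|^{2n}\neq 1$. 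You invert the order: you obtain (5) on the circle via Arzel\`a--Ascoli and stability of degree under homotopy (using that pointwise limits of limit center maps are again limit center maps), and then reduce (4) to the line case using (5), arguing by monotone convergence of $F^k(y)$ to a new fixed point and extracting a limit map that would collapse two distinct points, contradicting item (2). Both routes are valid; the paper's is a bit more direct and avoids the auxiliary fact about limits of limit center maps, while yours highlights the role of equicontinuity and gives an independent (degree/homotopy) reason for $|d|=1$ in (5). One minor imprecision: in your surjectivity argument for (2), the claim that the image is compact when $L_2\cong S^1$ tacitly assumes $L_1$ is compact; if $L_1\cong\RR$ you should instead note that a proper open arc of $S^1$ is homeomorphic to $\RR$ and reuse your ``bounded on one side'' contradiction.
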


\begin{proof}
By   topologically neutral property, for any $\e>0$, there exists $\delta>0$ such that any center path $\sigma$ of length bounded by $\delta$ has its images $f^i(\sigma)$ whose length is bounded by $\e$ for any $i\in\ZZ;$ by the continuity of center foliation, this in particular gives that for any limit center map $F:L_1\to L_2$ and any  two points $x,y\in L_1$ with $\ud^c(x,y)<\delta$,   one has $\ud^c(F(x),F(y))<\e.$ On the other hand, if there exists $\e_1>0$ such that for any $n\in\NN$, there exists a limit center map $F_n: L_1^n\to L_2^n$ and two points  $x_n,y_n\in L_1^n$ such that  $\ud^c(x_n,y_n)>\e_1$ and $\ud^c(F_n(x_n), F_n(y_n))<\frac 1n$, that is, there exists   center paths  whose length is uniformly bounded from below and some of whose images have length arbitrarily small,  which contradicts to the topologically neutral property.  This proves the first item. 

By the definition of limit center  maps and the continuity of center foliation, each limit center  map is surjective. 
Since the center bundle is non-degenerate everywhere, there exists $\e_0>0$ such that the length of each  compact center leaf is bounded from below by $\e_0.$  Then there exists $\delta_0\in(0,\delta_0/2)$ such that for any two points $x,y$ in a same center leaf with $\ud^c(x,y)<\delta_0$, one has $\ud^c(f^i(x),f^i(y))<\e_0/4$ for $i\in\ZZ.$  Thus, for any limit center map $F:L_1\to L_2$ and  any   center path $\sigma\subset L_1$ of length $\delta_0$, the length of $F(\sigma)$ is bounded by $\e_0/4$;  by  the topologically neutral property of $F$, one has that $F:\sigma\to F(\sigma)$ is  injective and therefore is a homeomorphism. This proves the second item.  

Given two limit center  maps $F:L_1\to L_2$ and $G: L_2\to L_3$, by the second item, the map $G\circ F: L_1\to L_3$ is a local homeomorphism. Let $\{n_i\}$ and $\{m_i\}$ be the sequence of integers such that $f^{n_i}|_{L_1}$ and $f^{m_j}|_{L_2}$ converge to $F$ and $G$  respectively. Let $\{x_i\}_{i\in\NN} $ be a dense subset of $L_1$. 
Let $\{\e_n\}_{n\in\NN}$ be a sequence of positive numbers such that $\e_n$ tends to $0$. For $\e_1$ and $x_1$, by the choices of $\{n_i\}$ and $\{m_i\}$, there exist $l_1\in\{n_i\}$ and $k_1\in\{m_i\}$ such that $f^{k_1+l_1}(x_1)$ is $\e_1$ close to $G\circ F(x_1)$. Assume that one already has $|l_1|<|l_2|<\cdots<|l_{i}|$ which are in $\{n_j\}$ and $|k_1|<|k_2|<\cdots <|k_i|$ which are in $\{m_j\}$ such that   $f^{l_t+k_t}(x_j)$ is $\e_t$ close to $F\circ G(x_j)$ for any $j\leq t\leq i$. Once again, by the choice of $\{n_j\}$ and $\{m_j\}$, for $\e_{i+1}$ and $x_1,\cdots,x_{i+1}$, there exist $|l_{i+1}|>|l_i|$ and $|k_{i+1}|>|k_i|$ such that $f^{k_{i+1}+l_{i+1}}(x_j)$ is $\e_{i+1}$ close to $G\circ F(x_j)$ for $j\leq i+1$. Then one gets a sequence of integers $\tau_i=k_i+l_i$ such that for any $j$,  $f^{\tau_i}(x_j)$ tends to $G\circ F(x_j)$ when $i$ tends to infinity. The topologically  neutral property and continuity of center foliation gives that  $f^{\tau_i}|_{L_1}$ pointwise converges to $G\circ F.$ This gives the third item.

Let $F: L\to L$ be a limit center map  with   fixed points. If $F$ preserves the orientation, let $p$ be a fixed point and  $I\subset L$ be a small center segment  such that $F|_I: I\to F(I)$ is a homeomorphism and  $p$ is an endpoint of $I$. As $F$ is topologically neutral,   all the points on $I$ are fixed points of $F$. By the arbitrariness of $p$ and $I$, one has that $F: L\to L$ is $\Id_L.$ If $F$ reverses the orientation, then by the second item,  $F^2: L\to L$ is a limit center map with fixed points and preserving the orientation, therefore $F^2$ is $\Id_L.$

Let $F: L\to L$ be a limit center map. If $L$ is homeomorphic to $\RR$, as $F$ is a local homeomorphism and is surjective, $F$ is a homeomorphism. If $L$ is homeomorphic to $\mathbb{S}^1$, since the limit center map $F: L\to L$ is a local homeomorphism on $L,$ $F$ is an endomorphism on $L$ of degree $d\in\ZZ.$ If $|d|\neq 1,$ $F$ is a covering map and therefore $F$ has periodic points, thus there exists $n\in \NN$ such that  $F^n: L\to L$ has fixed points; by the forth item, $F^{2n} :L\to L$ is $\Id_L$ which gives the contradiction.
\end{proof}

Lemma~\ref{l.limit-maps} motivates the following notions : 

\begin{definition}Consider center leaves $L, L_1,L_2$.  We denote by 
	$\cL(L)$ (resp. $\cL(L_1,L_2)$)  the set of all limit center maps  from $L$ to $L$ (resp. from $L_1$ to $L_2$). 
	
	We denote by $\cL^+(L)$ the subset of orientation preserving limit center maps from $L$ to $L$. 
	
	We denote by $\cL^s(L_1,L_2)$ (resp. $\cL^u(L_1,L_2)$)  the subset of limit center maps from $L_1$ to $L_2$ obtained as limit of sequences $f^{n_i}|_{L_1}$ with $n_i\to +\infty$ (resp. $n_i\to -\infty$). 
	
	We define in the same way $\cL^s(L)$ and $\cL^u(L)$. 
\end{definition}

Now, we   give the proof of Proposition~\ref{p.recurrent-center-saturated}.
\proof[Proof of Proposition~\ref{p.recurrent-center-saturated}]
Let $x$ be a recurrent point. Then by Lemma~\ref{l.exists-limit-maps}, there exists a limit center map $F:L_x\to L_x$ having $x$ as a fixed point. By the third  and forth items in Lemma~\ref{l.limit-maps}, $\Id_{L_x}$ is a limit center map, therefore, every point on $L_x$ is recurrent. 
\endproof 

\begin{corollary}\label{c.strong-intersection}Let $L$ be a center leaf containing $x, y$ with $y\in\omega(x)$.  Then every 
strong stable leaf cuts $L$ in at most $1$ point. 
\end{corollary}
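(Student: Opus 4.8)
The plan is to argue by contradiction, exploiting the limit center map produced by the forward accumulation of $x$ onto $y$, together with the injectivity of such maps.

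Suppose, for contradiction, that some strong stable leaf meets $L$ in two distinct points $p\neq q$, and let $\sigma\subset L$ be a compact center arc joining $p$ to $q$. Since $y\in\omega(x)$ and $x,y\in L$, we may choose a sequence $n_t\to+\infty$ with $f^{n_t}(x)\to y$. By Lemma~\ref{l.exists-limit-maps}, after extracting a subsequence $\{m_j\}\subset\{n_t\}$, the restrictions $f^{m_j}|_{L}$ converge to a limit center map $F\in\cL^s(L)$ (note that $m_j\to+\infty$). By Remark~\ref{r.uniform-convergence-of-center-limit}, the convergence $f^{m_j}|_{\sigma}\to F|_{\sigma}$ is uniform; in particular $f^{m_j}(p)\to F(p)$ and $f^{m_j}(q)\to F(q)$.

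Now comes the key point. Since $p$ and $q$ lie on a common strong stable leaf and $m_j\to+\infty$, the forward contraction of $\cF^{ss}$ gives $\ud\big(f^{m_j}(p),f^{m_j}(q)\big)\to 0$, hence $F(p)=F(q)$. On the other hand, by item (5) of Lemma~\ref{l.limit-maps}, the map $F\colon L\to L$ is a homeomorphism, so it is injective and $F(p)=F(q)$ forces $p=q$, a contradiction. Therefore every strong stable leaf cuts $L$ in at most one point.

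I do not expect any serious obstacle here, as all the needed machinery is already in place; the only two points that require some care are that the convergence $f^{m_j}\to F$ is \emph{uniform} on the compact arc $\sigma$ — so that the images of $p$ and $q$ genuinely converge to $F(p)$ and $F(q)$ — and that $F$ is \emph{globally} injective on $L$. Both are supplied, respectively, by Remark~\ref{r.uniform-convergence-of-center-limit} and by Lemma~\ref{l.limit-maps}. (The symmetric statement for strong unstable leaves would follow by running the same argument with $m_j\to-\infty$ when $y\in\alpha(x)$, but that is not claimed here.)
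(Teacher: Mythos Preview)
Your proof is correct and follows essentially the same approach as the paper: obtain a limit center map $F\colon L\to L$ via Lemma~\ref{l.exists-limit-maps} along a sequence $m_j\to+\infty$, observe that two points on a common strong stable leaf are sent to the same image by $F$, and contradict the fact that $F$ is a homeomorphism (item~(5) of Lemma~\ref{l.limit-maps}). Your extra care with the compact arc $\sigma$ and the uniform convergence from Remark~\ref{r.uniform-convergence-of-center-limit} is fine but not strictly needed---pointwise convergence at $p$ and $q$ already suffices.
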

\begin{proof}
	Consider a sequence $m_j\to+\infty$, given by Lemma~\ref{l.exists-limit-maps},   so that $f^{m_j}$ converges to a limit center map $F:L\to L$.
	Let us argue by contradiction. Assume that $z_1 \neq  z_2$ are points in $L$ with  $z_2\in \cF^{ss}(z_1)$, thus  $F(z_1)=F(z_2)$ and  $F$ is not a homeomorphism, 
contradicting to the fifth  item of  Lemma~\ref{l.limit-maps}. 
\end{proof}

\subsection{Limit center maps for transitive diffeomorphisms}~\label{s.limit-center-map-for-transitive-diffeo}
Let $f$ be  a transitive partially hyperbolic diffeomorphism with $1$-dimensional topologically neutral center bundle.  
We denote $\cN=\{x\in M: \alpha(x)=\omega(x)=M \}$,  then $\cN$ is $f$-invariant. As $f$ is transitive, then $\cN$ is a residual subset of $M$. 

We will build metrics along  center leaves in the residual subset $\cN$ of $M$ and we will show that the metric we built is $f$-invariant, continuous and invariant under holonomies of strong stable and strong unstable foliations. 

 In our setting, we show that $\cN$ is saturated by center leaves and we give some description of the sets of limit center maps.

\begin{proposition}~\label{p.limit-group}
	Let $f$ be a $C^1$ partially hyperbolic diffeomorphism. Assume that $f$ is transitive and  has $1$-dimensional topologically neutral center.
	
	Then for any center leaf $L$ containing a point in $\cN$, one has 
	\begin{itemize}
		\item $L\subset\cN$; 
		\item If $L$ is not compact, then  there is a homeomorphism $\psi_L\colon L\to \RR$ so that $$\cL^+(L)=\big\{\psi_L^{-1}\circ T_t\circ \psi_L, t\in\RR\big\}$$ where $T_t$ is the translation
		$T_t\colon \RR\to \RR,  s\mapsto s+t$. 
	In this case,  either $\cL(L)=\cL^+(L)$ or $\cL(L)$ is the group of homeomorphisms generated by $\cL^+(L)$ and $\psi_L^{-1}\circ (-\Id_{\RR})\circ \psi_L$.
		
		Furthermore $\psi_L$ is unique up to composition by  an affine map of $\RR$.
		\item If $L$ is compact,  then there is a homeomorphisms $\psi_L\colon L\to S^1=\RR/\ZZ$ so that  $$\cL^+(L)=\big\{\psi_L^{-1}\circ R_t\circ \psi_L, t\in S^1\big\}$$ where  $R_t$ is the rotation
		$T_t\colon S^1\to S^1,  s\mapsto s+t ~(\textrm{mod $\ZZ$})$. 
	In this case,  either $\cL(L)=\cL^+(L)$ or $\cL(L)$ is the group of homeomorphisms generated by $\cL^+(L)$ and $\psi_L^{-1}\circ (-\Id_{S^1})\circ \psi_L$.
		
		Furthermore $\psi_L$ is unique up to composition by  a rotation of $S^1$.
	\end{itemize} 	
 
\end{proposition}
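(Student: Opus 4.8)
The plan is to first establish that $L\subset\cN$ whenever $L$ meets $\cN$: if $x\in\cN\cap L$, then Lemma~\ref{l.exists-limit-maps} produces a limit center map $F\colon L\to L$ fixing $x$, and by items (3)--(5) of Lemma~\ref{l.limit-maps} the identity $\Id_L$ is a limit center map (taking $F\circ F$ if $F$ reverses orientation). Pulling back through Lemma~\ref{l.exists-limit-maps}, for any $z\in L$ one can realize $z$ as a limit of $f^{m_j}(z)$ along a sequence with $\Id_L$ as the limit center map, so $z$ has $\omega(z)\supset\overline{\{f^{m_j}(z)\}}$; combined with $x\in\cN$, density of the orbit of $x$ and the fact that $z$ is within a limit map of $x$'s iterates, one deduces $\alpha(z)=\omega(z)=M$, i.e.\ $z\in\cN$. (The argument must be done carefully, using that $\cN$ is $f$-invariant and that limit maps compose.)

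Next I would analyze the group structure. By Lemma~\ref{l.limit-maps}(5), every element of $\cL(L)$ is a homeomorphism of $L$; by item (3) the set $\cL(L)$ is closed under composition, and since $\Id_L\in\cL(L)$ and any $F\in\cL(L)$ has $F^{-1}\in\cL(L)$ (realize the inverse via a subsequence of the $-n_i$ or argue that $F^{-1}$ is again a limit of some $f^{m_j}$ using item (4) applied to a suitable iterate), $\cL(L)$ is a group of homeomorphisms of $L$. Then $\cL^+(L)$ is an index-at-most-$2$ subgroup. The crucial point is that $\cL^+(L)$ \emph{acts freely}: if $F\in\cL^+(L)$ fixes a point, then by Lemma~\ref{l.limit-maps}(4) $F=\Id_L$. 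Freeness lets me invoke H\"older's Theorem (Theorem~\ref{thm.holder}): $\cL^+(L)$ is topologically conjugate, via a homeomorphism $\psi_L\colon L\to\RR$ (resp.\ $L\to S^1$ when $L$ is compact, using that a compact $1$-manifold is a circle and item (5) rules out degree $\neq\pm1$ endomorphisms), to a group of translations of $\RR$ (resp.\ rotations of $S^1$).

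The remaining work is to show that $\cL^+(L)$ is conjugate to the \emph{full} translation/rotation group, not a proper subgroup, and to pin down $\psi_L$ up to affine maps. For this I would use transitivity together with the uniform-neutrality estimates of Lemma~\ref{l.limit-maps}(1): given $x\in L\cap\cN$, every point $y\in L$ is of the form $\lim f^{m_j}(x)$, and the associated limit center map $F$ sends $x$ to $y$; since the $\cL^+(L)$-action (or its composition with an orientation-reversing element) realizes every such $F$, the orbit of $x$ under $\cL^+(L)$ is all of $L$, so the translation/rotation subgroup acts transitively on $\RR$ (resp.\ $S^1$) — hence is the whole group. The uniform continuity/expansion bounds in item (1) guarantee that $\psi_L$ conjugating to the standard translations is genuinely a homeomorphism and that the parametrization is essentially canonical: two such conjugacies differ by a homeomorphism of $\RR$ (resp.\ $S^1$) commuting with all translations (resp.\ rotations), which forces an affine map (resp.\ a rotation). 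The dichotomy $\cL(L)=\cL^+(L)$ or $\cL(L)=\langle\cL^+(L),\,\psi_L^{-1}\circ(-\Id)\circ\psi_L\rangle$ follows because an orientation-reversing element $G\in\cL(L)$ satisfies $G^2=\Id_L$ by item (4), so $G$ is an involution; conjugating by $\psi_L$, an involution commuting appropriately with the translations (resp.\ rotations) must be $s\mapsto c-s$, and after adjusting $\psi_L$ by the affine (resp.\ rotational) freedom we may take $c=0$.

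The main obstacle I anticipate is the transitivity step: upgrading "$\cL^+(L)$ acts freely" to "$\cL^+(L)$ acts transitively on $L$" requires carefully chaining Lemma~\ref{l.exists-limit-maps} (existence of limit maps realizing prescribed orbit limits) with the fact that along a leaf through $\cN$, \emph{every} point is such a limit — and checking that the limit map obtained actually lies in $\cL^+(L)$ after composing with orientation-reversing elements or with $\Id_L$. Making the "unique up to affine/rotation" statement precise (rather than just "some conjugacy exists") is the other delicate point, and it hinges on the uniform estimates of Lemma~\ref{l.limit-maps}(1) rather than on soft H\"older-type arguments alone.
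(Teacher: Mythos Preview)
Your architecture---freeness of $\cL^+(L)$ via item~(4) of Lemma~\ref{l.limit-maps}, then H\"older's theorem, then transitivity---matches the paper, but two steps have genuine gaps, and they are linked by a reversal of logical order. Your argument for $L\subset\cN$ is incomplete: knowing $\Id_L\in\cL(L)$ shows only that every $z\in L$ is recurrent, not that $\omega(z)=M$, and the parenthetical about ``density of the orbit of $x$'' does not close this. The paper proceeds in the opposite order: it first proves that $\cL^+(L)$ acts transitively on $L$, using only $\omega(x)=M$ for the single point $x$, and \emph{then} uses that transitivity to obtain $L\subset\cN$. Concretely, the set $\cB^+=\{z\in L:\omega(z)=M\}$ is shown to be closed; at a boundary point $w\in\cB^+$ adjacent to a gap one applies the already-established transitivity (based at $w$, since $\omega(w)=M$) to produce an orientation-preserving limit map sending $w$ very close to $x$, which forces an interior point of the gap to have $x$ in its $\omega$-limit set, a contradiction. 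Your suggestion for inverses (``subsequence of the $-n_i$'') also does not work; the paper obtains inverses only \emph{after} $L\subset\cN$, by choosing $G\in\cL^+(L)$ with $G(F(x))=x$ and applying item~(4).

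The second gap is the one you correctly flag but do not resolve: producing \emph{orientation-preserving} limit maps sending $x$ to a prescribed nearby point. The map $F$ supplied by Lemma~\ref{l.exists-limit-maps} may reverse orientation, and ``composing with an orientation-reversing element'' sends $x$ to $G(y)$, not to $y$, so the suggestion is circular. The paper's device (its Claim~\ref{c.small-translation}) is a two-step case analysis producing arbitrarily small orientation-preserving displacements: if the limit map $\hat F$ with $\hat F(x)=x_\e^+$ reverses orientation, then $\hat F^2=\Id_L$ forces a fixed point $z_\e\in(x,x_\e^+)$; take a second limit map $\hat H$ with $\hat H(x)=z_\e$; if $\hat H$ preserves orientation one is done, and if $\hat H$ also reverses orientation then $\hat F\circ\hat H$ preserves orientation and still satisfies $\hat F\circ\hat H(x)=\hat F(z_\e)=z_\e$. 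With these small translations in hand, the $\cL^+(L)$-orbit of $x$ is shown to be closed and to have no boundary in $L$, hence equals $L$; this is the missing idea your plan needs.
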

\begin{proof}
Let $\e_0>0$ be the infimum of the lengths of compact center leaves	if compact center leaves exist, otherwise one takes $\e_0=1.$

Fix a point $x\in L\cap \cN$. Since $L$ is one dimensional, one gives an orientation to it. 
For any $\e\in(0,\e_0/4)$, 
let $I^+_\e=[x,x^+_\e]^+$ (resp. $I_\e^-=[x^-_\e,x]^+$) be a center segment whose length is   $\e$ and the direction pointing from $x$ to $x_\e^+$ (resp. $x_\e^-$) through $I_\e^+$ (resp. $I^-_\e$) coincides with the positive (resp. negative)  direction of $L$.  
\begin{claim}~\label{c.small-translation}
	For  $\e\in(0,\e_0/4)$, there exists a limit center map $F\in\cL^+(L)$ (resp. $G\in\cL^+(L)$) sending $x$ to a point in $I^+_\e\setminus\{x\}$ (resp.  $I^-_\e\setminus\{x\}$). Moreover, such limit center maps can be obtained by  the forward and backward  iterates of $f$ respectively.
\end{claim}
\proof
We only deal with the case for $I^+_\e$and prove the claim only using the fact $\omega(x)=M$ (the other cases   follow analogously).

As $\omega(x)=M$, by Lemma~\ref{l.exists-limit-maps}, there exists a limit map $\hat{F}: L\to L$ sending $x$ to $x^+_\e$.
If $\hat F$ preserves the orientation, one can conclude. If $\hat F$ reserves the orientation, by the forth item of Lemma~\ref{l.limit-maps}, one has that $\hat{F}^2=\Id_L$. In this case $\hat F(x^+_\e)=x$, therefore there exists a $\hat F$-fixed  point $z_\e\in\Int(I^+_\e)$.  Now, consider a limit center map $\hat H: L\to L$ sending $x$ to $z_\e$. If $\hat H$ preserves the orientation, one can also conclude. 
If $\hat H$ reverses the orientation, we consider the map $F=\hat F\circ\hat{H}$ which is a limit center map from $L\to L$ by the third item of Lemma ~\ref{l.limit-maps} and preserves the orientation of $L$. Since $\hat H(x)=z_\e$ and $\hat{F}(z_\e)=z_\e$, one has $F(x)=z_\e$.
\endproof

 Now, we   show that there exist limit center maps preserving the orientation and sending $x$ to any point in $L.$ To be precise:
\begin{claim}~\label{c.all-translation}
For any point $y\in L$,   there exists a limit center map $F\in\cL^+(L)$ which sends $x$ to $y.$ Moreover, one can obtain such limit center maps by the forward  as well as the backward iterates of $f$.
\end{claim} 
\proof 

Consider the set 
$$\cA:=\big\{y\in L: \textrm{ there exists $F\in\cL^+(L)$ such that $F(x)=y$ } \big\}.$$ The claim is reformulated as $\cA=L.$ It suffices to show that one can obtain  limit center maps which preserve the orientation  send $x$ to any point in $L$ by the forward iterates of $f$. The other case would follow analogously.

By Claim~\ref{c.small-translation}, the set $\cA$ is non-empty. We will show that $\cA$ is a   closed subset of $L$.  Let $\{y_n\}_{n\in\NN}$ be a sequence of points in $\cA$ which tends to $y_0$ according  to the distance on $L$. Now, one fixes a small neighborhood of  $y_0$. Then one gives an orientation to  those center plaques in this neighborhood of $y_0$  according to the orientation of the  local center plaque $L_{loc}(y_0)$ of $y_0$. For any $l\in\NN$,  take $y_{n_l}$ which is $1/l$ close to $y_0$ on $L$. As $y_{n_l}\in \cA$,   one can choose $m_l\in\NN$ large enough such that $f^{m_l}(x)$ is $1/l$ close to $y_{n_l}$ and $f^{m_l}: {L_{loc}(x)}\to {L_{loc}(f^{m_l}(x))}$ preserves the orientation of local plaques. 
Now, one gets a sequence of positive integers $\{m_l\}$ tending to infinity such that $f^{m_l}(x)$ tends to $y_0$ and  $f^{m_l}: {L_{loc}(x)}\to {L_{loc}(f^{m_l}(x))}$ preserves the orientation of local plaques. By Lemma~\ref{l.exists-limit-maps}, there exists a limit center map $F\in\cL^+(L)$ with $F(x)=y_0$.

If $\cA\neq L$, then there exists an open center path $(a, b)\cap \cA=\emptyset$ and one endpoint is in $\cA$. Without loss of generality, one can assume $a\in \cA$. Let $F_a\in\cL^+(L)$ be a map such that $F_a(x)=a$. For $\e>0$,   consider the center path  $I^+_\e=[x, x_\e^+]^+$ as before. By Claim~\ref{c.small-translation}, there exists $F_\e\in\cL^+(L)$ sending $x$ to a point in the interior of $I^+_\e$. As the limit center maps are uniformly topological neutral (due to the first item of Lemma~\ref{l.limit-maps}), for $\e>0$, the limit center map $F_\e\circ F_a$ sends $x$ to a point in $(a,b)$ which gives the contradiction. 
\endproof

Now, consider the sets $$\cB^+=\big\{y\in L: \omega(y)=M \big\} \textrm{ and  }\cB^-=\big\{y\in L: \alpha(y)=M \big\},$$  which are non-empty since  $x\in\cB^+\cap \cB^-$. The following claim gives that $\cN$ is saturated by center leaves. 
\begin{claim}
	$\cB^+=\cB^-=L.$
\end{claim}
\proof 
One only needs to deal with  $\cB^+$ and the case for $\cB^-$ would follow analogously.

We will first show that $\cB^+$ is a closed subset of $L$. Let $z_n$ be a sequence of points in $L$ such that $z_n$ tends to a point $z\in L$ according to the distance on $L$. For $\e>0$, let $z_n$ be a point close enough to $z$ such that for shortest center path $\sigma_n$ connecting $z_n$ and $z$,   the length of $f^i(\sigma_n)$ is bounded by $\e/2$ for $i\in\ZZ$ due to the topologically neutral property on the center bundle.  As $z_n\in\cB$, one can take  $m_n\in\NN$ large such that $\ud(f^{m_n}(z_n),x)<\e/2,$
which implies that $\ud(f^{m_n}(z),x)<\e$. The arbitrariness of $\e$  gives $x\in\omega(z)$ which implies $z\in\cB^+.$

Assume, on the contrary, that $\cB^+$  is not the whole center leaf $L$.   Since $\cB^+$ is closed in $L$, there exists a center path  $\sigma=[z,w]\subset L$ such that
\begin{itemize}
	\item its interior is disjoint from $\cB^+$;
	\item one of its endpoint is in $\cB^+;$
		\item the orientation pointing from $z$ to $w$ in $\sigma$ coincides with the positive orientation of $L$.
\end{itemize}  
 Without loss of generality, one can assume that    $w\in \cB^+$. 
 By topologically neutral property on the center bundle, there exists $\delta_0>0$ such that  the length of $f^i(\sigma)$ is bounded from below by $\delta_0$ for any $i\in\ZZ$. Consider     a short center path  $[x,p]$ in $L$ such that its length is much smaller $\delta_0$ and the orientation of $[x,p]$ pointing from $x$ to $p$ coincides with the positive orientation of $L$.
As $w\in\cB^+$, one can apply Claim ~\ref{c.all-translation} to $w$ with respect to the forward iterates of $f$, and one gets a limit center map  $F:L\to L$ which is orientation preserving and maps $w$ to $p$. This implies that there exists a point $w_0$ in the interior of $[z,w]$ whose $\omega$-limit set contains $x$. As $x\in\cN$, one has $w\in\cB^+$ and one obtains the contradiction.  
\endproof 

In the following, we will show that $\cL^+(L)$ is a group; in particular, this implies  that the limit center map in $\cL^+(L)$ sending one specific point to another one  is unique. By  Claim~\ref{c.all-translation},  the third and forth items of Lemma~\ref{l.limit-maps}, one has 
\begin{itemize}
	\item $\Id_L\in\cL^+(L)$;
	\item for any $F,G\in\cL^+(L)$, $F\circ G\in\cL^+(L)$.
\end{itemize}
  To prove that $\cL^+(L)$ is a group, one needs to check that for any $F\in\cL^+(L)$, there exists $G\in\cL^+(L)$ such that $F\circ G=G\circ F=\Id_L.$ Let $F\in\cL^+(L)$ such that $F(x)=y$ for some $y\in L$. As $L\subset \cN$, there exists $G\in\cL^+(L)$ such that $G(y)=x$. Then the limit center map $G\circ F$ has a fixed point, by the forth item of Lemma~\ref{l.limit-maps}, $G\circ F=\Id_L$. By the fifth item of Lemma~\ref{l.limit-maps}, $F$ and $G$ are homeomorphisms on $L$, therefore $F\circ G=\Id_L.$ 
  
  To summarize, one obtains that the group $\cL^+(L)$ acts freely on $L$ and the action is faithful. By H\"older theorem (i.e. Theorem~\ref{thm.holder}),  the group $\cL^+(L)$ is isomorphic to the group of translations (resp. rotations) on $\RR$ (resp. $S^1$) if $L$ is homeomorphic to $\RR$ (resp. $S^1$). 
  As each orientation reversing limit center map from $L$ to $L$   is an involution,    $\cL(L)$ is a group; moreover,  $\cL(L)$ either  coincides  with $\cL^+(L)$ or is the group generated by $\cL^+(L)$ and $-\Id_L$.
\end{proof}

Next remark explains why these properties are key points for the proof of Theorem~\ref{thm.main-metric}:

\begin{Remark} The Euclidean metric on $\RR$ (resp. on $\RR/\ZZ$) is invariant under the action of  the group generated by the  translations and $-\Id_\RR$  (resp. by the rotations and $-\Id_{S^1}$)  and any invariant metric by the set of translations (resp.   rotations) is obtained by multiplying the Euclidean metric by a scalar. 
\end{Remark}

Lemma~\ref{l.exists-limit-maps} gives that for any $x,y\in M$,  if $y\in\omega(x)$, then there exists a limit center map from $L_x$ to $L_y;$ this allows us to build  the connections between  the limit center maps on different center leaves.
\begin{lemma}~\label{l.limit-map-between-different-center}
		Let $f$ be a $C^1$ partially hyperbolic diffeomorphism. Assume that $f$ is transitive and  has $1$-dimensional topologically neutral center.

	Then for any  two center leaves $L_1$ $L_2$ each of which contains a point in $\cN$, one has 
	 \begin{itemize}
	\item each limit center map from $L_1$ to $L_2$ is a homeomorphism; 
	\item for any limit center maps $F, G\in\cL(L_1,L_2)$,  there are $F_1\in \cL(L_1)$ and $F_2\in \cL(L_2)$ so that
	$$G=F\circ F_1= F_2\circ F.$$
\end{itemize}

\end{lemma}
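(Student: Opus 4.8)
The plan is to exploit transitivity to produce, for any two center leaves $L_1,L_2$ meeting $\cN$, a limit center map between them in both directions, and then to use Proposition~\ref{p.limit-group} together with the algebraic closure properties of Lemma~\ref{l.limit-maps} (composition stays a limit center map, item 3; fixed points force the identity or an involution, item 4; every limit center self-map is a homeomorphism, item 5). First I would fix $x_1\in L_1\cap\cN$ and $x_2\in L_2\cap\cN$. Since $\alpha(x_1)=\omega(x_1)=M$, the forward orbit of $x_1$ accumulates on $x_2$, so by Lemma~\ref{l.exists-limit-maps} there is a limit center map $F\colon L_1\to L_2$; symmetrically, since $\omega(x_2)=M$, there is a limit center map $H\colon L_2\to L_1$. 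Now $H\circ F\in\cL(L_1)$ and $F\circ H\in\cL(L_2)$ by item 3 of Lemma~\ref{l.limit-maps}. Both $\cL(L_1)$ and $\cL(L_2)$ are groups of homeomorphisms acting freely on the respective leaves (Proposition~\ref{p.limit-group}), so $H\circ F$ and $F\circ H$ are homeomorphisms; since $F$ is a local homeomorphism and is surjective (item 2 of Lemma~\ref{l.limit-maps}), the relation $H\circ F=$ (homeomorphism) forces $F$ to be injective, hence a homeomorphism, and likewise $H$. This proves the first bullet. Note one must be a little careful: $\cL(L_1)$ as described in Proposition~\ref{p.limit-group} consists of maps obtained from sequences $f^{n_i}$ with $|n_i|\to\infty$, so a composition of a forward limit map with a backward limit map lands genuinely in $\cL(L_1)$ — this is exactly the content of the proof of item 3, where the exponents $\tau_i=k_i+l_i$ can be taken of either sign by choosing along the appropriate subsequences.

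For the second bullet, let $F,G\in\cL(L_1,L_2)$ be arbitrary. Since $F$ is now known to be a homeomorphism, set $F_1:=F^{-1}\circ G\colon L_1\to L_1$ and $F_2:=G\circ F^{-1}\colon L_2\to L_2$; these are homeomorphisms satisfying $G=F\circ F_1=F_2\circ F$ by construction, so the only thing to check is that $F_1\in\cL(L_1)$ and $F_2\in\cL(L_2)$. I would do this by the same diagonal-extraction trick as in item 3 of Lemma~\ref{l.limit-maps}: writing $F=\lim f^{n_i}|_{L_1}$, $G=\lim f^{m_j}|_{L_1}$, and choosing a backward limit map $\bar F\colon L_2\to L_1$ with $\bar F=\lim f^{-p_k}|_{L_2}$, one checks that $\bar F\circ F=:\Phi\in\cL^{}(L_1)$ acts freely and has, in fact, to be accounted for; more directly, since $\cL^+(L_1)$ acts simply transitively on $L_1$ (Proposition~\ref{p.limit-group}) and the orientation-reversing part is a single coset, it suffices to produce \emph{some} limit center self-map of $L_1$ sending a chosen point $x_1$ to the point $F^{-1}(G(x_1))$ with the correct orientation behaviour. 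Such a map exists because $G(x_1)\in L_2\cap\cN$ — indeed every point of $L_2$ is in $\cN$ by Proposition~\ref{p.limit-group} — so $\omega(x_1)\ni G(x_1)$ forces, via Lemma~\ref{l.exists-limit-maps} applied along a subsequence that also realizes $F$ on a cofinite part of a dense set, a limit center map; composing it with $\bar F$ realizes $F_1$ as an element of $\cL(L_1)$. The matching orientation, hence the identification with $F^{-1}\circ G$ rather than merely agreeing at one point, follows from freeness of the action.

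The main obstacle I expect is bookkeeping the sequences: one wants $F_1$ to be genuinely a limit center map, not just a homeomorphism equal to $F^{-1}\circ G$, and the clean way is to run the same simultaneous diagonal argument used in item 3 of Lemma~\ref{l.limit-maps} — extract a single sequence of exponents realizing $\bar F$ on $L_2$ and, after composing, realizing $\bar F\circ G$ on $L_1$ — and then recognize $\bar F\circ F$ inside $\cL(L_1)$ and cancel. Everything else is formal: the homeomorphism property of the cross-leaf maps comes from the group structure on each leaf, and the factorization $G=F\circ F_1=F_2\circ F$ is automatic once $F$ is invertible. I would present the argument in the order: (1) existence of limit maps in both directions via transitivity and Lemma~\ref{l.exists-limit-maps}; (2) cross-leaf maps are homeomorphisms, using that compositions land in the leafwise groups of Proposition~\ref{p.limit-group}; (3) the factorization, defining $F_1,F_2$ as the obvious conjugates and verifying membership in $\cL(L_i)$ by the diagonal construction.
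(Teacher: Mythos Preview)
Your first bullet is correct and matches the paper's argument: pick any $\bar F\in\cL(L_2,L_1)$ (it exists since $L_2\subset\cN$), note $\bar F\circ F\in\cL(L_1)$ is a homeomorphism (item~5 of Lemma~\ref{l.limit-maps}, or Proposition~\ref{p.limit-group}), hence $F$ is injective and therefore a homeomorphism.

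For the second bullet your ``more directly'' passage has a gap. You want to identify your constructed $\Psi\in\cL(L_1)$ with $F^{-1}\circ G$ by ``freeness of the action'', but freeness of the $\cL(L_1)$-action on $L_1$ only identifies two elements \emph{of} $\cL(L_1)$ agreeing at a point, and $F^{-1}\circ G$ is not yet known to lie there---that is precisely what you are trying to prove. Equivalently, you have not yet established that agreeing at one point with matching orientation determines an element of $\cL(L_1,L_2)$ uniquely. The fix is exactly what you sketch in your final paragraph, and it is essentially the paper's argument: compose both $F$ and $G$ on the left with a fixed $\bar F\in\cL(L_2,L_1)$ so that $\bar F\circ F$ and $\bar F\circ G$ land in the group $\cL(L_1)$; then $F_1:=(\bar F\circ F)^{-1}\circ(\bar F\circ G)\in\cL(L_1)$ since $\cL(L_1)$ is a group, and $F\circ F_1=G$ follows because $\bar F$ is a bijection. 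The paper phrases this slightly differently---it first picks a candidate $F_1\in\cL^+(L_1)$ (adjusted by an involution $R$ if $F$ and $G$ have opposite orientation behaviour) so that $F\circ F_1$ and $G$ agree at one point, then composes with a return map $H\in\cL(L_2,L_1)$ and applies item~4 of Lemma~\ref{l.limit-maps} to the two resulting elements of $\cL(L_1)$ sharing a fixed point---but the content is the same. Present the $\bar F$-and-cancel argument directly and drop the freeness detour.
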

\proof
By Proposition~\ref{p.limit-group}  and the assumption, $L_1\cup L_2$ is contained in $\cN.$ 

Let $H\in\cL(L_1,L_2)$ be a limit center map. By Lemma~\ref{l.exists-limit-maps} and the fact that $L_2\subset \cN$, for a point  $x\in L_1$, there exists a limit center map $\Phi: L_2\to L_1$ with $\Phi(H(x))=x$. By the third item of Lemma~\ref{l.limit-maps}, $\Phi\circ H$ is a limit center map from $L_1$ to $L_1$ which is a homeomorphism due to the forth item of Lemma~\ref{l.limit-maps}. Therefore $H$ is injective. As $H$ is surjective, one has that $H$ is a homeomorphism with $H^{-1}=\Phi.$

As the center bundle is one dimensional, one can give an orientation to $L_1$ and $L_2$ respectively such that $F$ preserves the orientation. 
As $F$ and $G$ are surjective, there exist $x_1,x_2\in L_1$ such that $F(x_1)=G(x_2)$. By Proposition~\ref{p.limit-group}, there exists a limit center map $F_1\in\cL^+(L_1)$ such that $F_1(x_2)=x_1.$ Therefore $F\circ F_1(x_2)=G(x_2)$. Let $H: L_2\to L_1$ be a limit center map with $H\circ G(x_2)=x_2$.  If $G$ also preserves the orientation,    then  $H\circ (F\circ F_1)=H\circ G$ since they have a common fixed point and simultaneously preserve or reverse the orientation , which implies that $F\circ F_1=G.$
If $G$ reverses the orientation, then one of the maps $H\circ G$ and $H\circ F\circ F_1$ reverses the orientation, thus   $\cL(L_1)$ contains an involution. By Proposition~\ref{p.limit-group}, there exists an involution $R\in\cL(L_1)$ having $x_2$ as a fixed point, then  the map $F\circ (F_1\circ R)$   reverses the orientation. An analogous argument as above gives that $F\circ (F_1\circ R)=G. $

Similarly, one can show that there exists $F_2\in\cL(L_2)$ with $F_2\circ F=G.$
\endproof 


The first item of Lemma~\ref{l.limit-map-between-different-center}  allows us to consider the image $F_*(\ell)$ of a
$\cL(L)$-invariant  metric $\ell$ on a center leaf $L\subset \cN$ by a limit center map $F\in \cL(L,L_1)$ for $L_1\subset \cN$, as $F$ is a homeomorphism. The second item gives that the metric $F_*(\ell)$ on $L_1$ is independent of the choice of $F$ and is  $\cL(L_1)$-invariant.

\begin{corollary}
	Consider a center leaf $L$ containing a point in $\cN$ (equivalently, included in $\cN$), and fix a  $\cL(L)$-invariant metric $\ell_L$ on $L$.  

For any center leaf $L_1\subset \cN$ and any two limit center maps $F_1,F_2\in \cL(L,L_1)$,  the image metrics $(F_1)_*(\ell_L)$ and $(F_2)_*(\ell_L)$ are equal:

$$(F_1)_*(\ell_L) = (F_2)_*(\ell_L).$$
Let us denote $\ell_{L_1}$ this metrics.  Then $\ell_{L_1}$ is invariant under the action of $\cL(L_1)$. 
\end{corollary}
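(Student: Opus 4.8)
The plan is to obtain the corollary as a formal consequence of Lemma~\ref{l.limit-map-between-different-center}, the $\cL(L)$-invariance of $\ell_L$, and the composition property of limit center maps (third item of Lemma~\ref{l.limit-maps}). First I would record that the pushforward operation is meaningful: by the first item of Lemma~\ref{l.limit-map-between-different-center} every $F\in\cL(L,L_1)$ is a homeomorphism carrying center paths of $L$ onto center paths of $L_1$, so for a center metric $\ell$ on $L$ the formula $F_*(\ell)(\sigma):=\ell(F^{-1}(\sigma))$ defines a center metric on $L_1$ (positivity, additivity and continuity are immediately inherited from those of $\ell$), and moreover $(G\circ F)_*=G_*\circ F_*$ whenever the composition is defined. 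Also $F_*(\ell)=\ell$ for $F$ a homeomorphism of $L$ exactly means $\ell$ is invariant under $F$.

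For the well-definedness of $\ell_{L_1}$, I would take $F_1,F_2\in\cL(L,L_1)$ and apply the second item of Lemma~\ref{l.limit-map-between-different-center}, with the roles of $L_1,L_2$ there played by $L,L_1$ and the roles of $F,G$ played by $F_1,F_2$: there is $G_0\in\cL(L)$ with $F_2=F_1\circ G_0$. Since $\cL(L)$ is a group (Proposition~\ref{p.limit-group}) and $\ell_L$ is $\cL(L)$-invariant, $(G_0)_*(\ell_L)=\ell_L$, hence
$$(F_2)_*(\ell_L)=(F_1\circ G_0)_*(\ell_L)=(F_1)_*\big((G_0)_*(\ell_L)\big)=(F_1)_*(\ell_L).$$
Thus $\ell_{L_1}:=(F_1)_*(\ell_L)$ does not depend on the chosen $F_1\in\cL(L,L_1)$; this set is nonempty because $L\cup L_1\subset\cN$, so Lemma~\ref{l.exists-limit-maps} produces limit center maps between the two leaves.

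Finally, for the $\cL(L_1)$-invariance, I would fix $H\in\cL(L_1)$ and any $F_1\in\cL(L,L_1)$. By the third item of Lemma~\ref{l.limit-maps}, $H\circ F_1\in\cL(L,L_1)$, so the well-definedness just established gives
$$\ell_{L_1}=(H\circ F_1)_*(\ell_L)=H_*\big((F_1)_*(\ell_L)\big)=H_*(\ell_{L_1}),$$
which is precisely the asserted invariance of $\ell_{L_1}$ under $\cL(L_1)$. I do not expect a genuine obstacle here: the one point that must be used carefully is that the second item of Lemma~\ref{l.limit-map-between-different-center} really does exhibit any two elements of $\cL(L,L_1)$ as differing by a left factor lying in $\cL(L)$, and that all the maps involved (both the limit center maps between the leaves and the factors in $\cL(L)$ and $\cL(L_1)$) are homeomorphisms — but both facts are already contained in the lemmas invoked, so the corollary is essentially bookkeeping.
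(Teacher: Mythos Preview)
Your proof is correct and follows exactly the approach the paper indicates: the paper does not give a separate proof of this corollary, but immediately before stating it explains that the first item of Lemma~\ref{l.limit-map-between-different-center} makes the pushforward meaningful and the second item makes it independent of the choice of $F$ and $\cL(L_1)$-invariant. Your write-up simply unpacks this sentence, including the clean observation that $\cL(L_1)$-invariance follows by noting $H\circ F_1\in\cL(L,L_1)$ via the composition property and then invoking the well-definedness just proved.
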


\begin{Remark}~\label{r.f-invriance-of-metric}
  Let $L\subset \cN$ be a center leaf  and $\ell_L$ be a $\cL(L)$-invariant metric on $L$. Then $f(L)\subset \cN$. Furthermore, for any $F\in \cL(L,f(L))$, notice that $f^{-1}\circ F\in\cL(L)$, thus
$$F_*(\ell_L)=f_*(\ell_L).$$
\end{Remark}

To summarize, we get the  next proposition: 

\begin{proposition}\label{p.family-metrics}
		Let $f$ be a $C^1$ partially hyperbolic diffeomorphism. Assume that $f$ is transitive and  has $1$-dimensional topologically neutral center.

	Then there is a family $\{\ell_L\}_{ L \mbox{ center leaf in } \cN}$ of metrics in the center leaves contained in $\cN$ so that 
\begin{itemize}
 \item for any center leaves $L_1,L_2$ contained in $\cN$ and any $F\in\cL(L_1,L_2)$ one has 
 $$F_*(\ell_{L_1})=\ell_{L_2}$$
 \item for any center leaf $L\subset \cN$ one has 
 $$f_*(\ell_L)=\ell_{f(L)}.$$
\end{itemize}
Furthermore, if  $\{\tilde\ell_L\}_{L \mbox{ center leaf  in } \cN}$ is another family of metric satisfying the properties above, then there is $\lambda>0$ so that for any $L\subset \cN$ one has 
$$\tilde\ell_L=\lambda\cdot\ell_L.$$
 
\end{proposition}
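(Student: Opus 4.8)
The plan is to assemble the statement out of Proposition~\ref{p.limit-group}, Lemma~\ref{l.limit-map-between-different-center} and the corollary just stated; essentially all of the analytic content is already in place, so what remains is bookkeeping together with one elementary classification of invariant length functions.

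\emph{Building the family.} Since $f$ is transitive, $\cN$ is residual, hence non-empty; by Proposition~\ref{p.limit-group} every center leaf meeting $\cN$ is contained in $\cN$, so there is a center leaf $L_\ast\subset\cN$, which we fix. By Proposition~\ref{p.limit-group} there is a homeomorphism $\psi_{L_\ast}$ from $L_\ast$ onto $\RR$ (or onto $S^1$, if $L_\ast$ is compact) conjugating $\cL^+(L_\ast)$ onto the group of translations of $\RR$ (resp. rotations of $S^1$), and then $\cL(L_\ast)$ is conjugated into the group generated by these translations (resp. rotations) and $-\Id$. Since the Euclidean length is invariant under that larger group, its pull-back by $\psi_{L_\ast}$ is a $\cL(L_\ast)$-invariant center metric $\ell_{L_\ast}$ on $L_\ast$. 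For an arbitrary center leaf $L\subset\cN$, pick $x\in L_\ast\cap\cN$; then $L\subset M=\omega(x)$, so Lemma~\ref{l.exists-limit-maps} gives $\cL(L_\ast,L)\neq\emptyset$, and by the preceding corollary the metric $F_*(\ell_{L_\ast})$ does not depend on the choice of $F\in\cL(L_\ast,L)$ and is $\cL(L)$-invariant. Set $\ell_L:=F_*(\ell_{L_\ast})$; taking $F=\Id_{L_\ast}$ when $L=L_\ast$ shows this is consistent.

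\emph{The two equivariance properties.} If $L_1,L_2\subset\cN$ and $G\in\cL(L_1,L_2)$, choose $F_1\in\cL(L_\ast,L_1)$; by the third item of Lemma~\ref{l.limit-maps} we have $G\circ F_1\in\cL(L_\ast,L_2)$, hence $\ell_{L_2}=(G\circ F_1)_*(\ell_{L_\ast})=G_*\big((F_1)_*(\ell_{L_\ast})\big)=G_*(\ell_{L_1})$, which is the first property. For the second, let $L\subset\cN$; then $f(L)\subset\cN$ since $\cN$ is $f$-invariant. Choose $F\in\cL(L_\ast,L)$ arising from a sequence $f^{n_i}|_{L_\ast}\to F$ with $|n_i|\to\infty$. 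Then $f^{n_i+1}|_{L_\ast}\to f\circ F$ pointwise with $|n_i+1|\to\infty$, so $f\circ F\in\cL(L_\ast,f(L))$, and therefore $\ell_{f(L)}=(f\circ F)_*(\ell_{L_\ast})=f_*\big(F_*(\ell_{L_\ast})\big)=f_*(\ell_L)$; this is in the spirit of Remark~\ref{r.f-invriance-of-metric}.

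\emph{Uniqueness and the main obstacle.} Let $\{\tilde\ell_L\}$ be a second family with these two properties. The first property with $L_1=L_2=L_\ast$ says $\tilde\ell_{L_\ast}$ is $\cL(L_\ast)$-invariant, in particular $\cL^+(L_\ast)$-invariant; transporting it by $\psi_{L_\ast}$ to $\RR$ (resp. $S^1$) produces an additive, continuous, positive length function $m$ on arcs that is invariant under every translation (resp. rotation). Then $t\mapsto m([0,t])$ satisfies Cauchy's functional equation, and continuity together with monotonicity (from positivity) makes it linear; hence $m$, and likewise the length function obtained from $\ell_{L_\ast}$, is a positive multiple of the Euclidean one, so $\tilde\ell_{L_\ast}=\lambda\,\ell_{L_\ast}$ for some $\lambda>0$. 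Finally, for any $L\subset\cN$ and $F\in\cL(L_\ast,L)$, the first property for $\{\tilde\ell\}$ together with the fact that pushforward commutes with multiplication by a scalar gives $\tilde\ell_L=F_*(\tilde\ell_{L_\ast})=\lambda\,F_*(\ell_{L_\ast})=\lambda\,\ell_L$. There is no genuine obstacle here; the only steps requiring a little care are the index shift showing $f\circ F$ remains a limit center map, and the classification of translation- and rotation-invariant continuous length functions used in the uniqueness part.
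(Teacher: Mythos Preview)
Your proof is correct and follows essentially the same approach as the paper, which presents Proposition~\ref{p.family-metrics} as a summary of the preceding corollary and Remark~\ref{r.f-invriance-of-metric} without writing out a separate argument. The only cosmetic difference is in the $f$-equivariance step: the paper's Remark~\ref{r.f-invriance-of-metric} observes that $f^{-1}\circ F\in\cL(L)$ for $F\in\cL(L,f(L))$, whereas you instead verify directly that $f\circ F\in\cL(L_\ast,f(L))$ via the shifted sequence $f^{n_i+1}$; both routes are equivalent and equally short.
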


Thus to prove  Theorem~\ref{thm.main-metric},  it remains to show that the family of metrics 
$\{\ell_L\}_{ L \mbox{ center leaf  in } \cN}$ extends in a continuous way  as a center metric on all $M$. The main tool for proving that is to check that the family  $\{\ell_L\}$ is invariant by the holonomies of the strong stable and strong unstable foliations, which is the aim of next section.

\subsection{Holonomy invariance and continuity: Ending the proof of Theorem~\ref{thm.main-metric}}

In this section, we  keep the notations from Section~\ref{s.limit-center-map-for-transitive-diffeo}. 
The following lemma tells us that the strong stable holonomy is well defined restricted to $\cN$.
\begin{lemma}\label{l.injective-holonomy} 
		Let $f$ be a $C^1$ partially hyperbolic diffeomorphism. Assume that $f$ is transitive and  has $1$-dimensional topologically neutral center. Let $L_1$, $L_2$ be two center leaves contained in $\cN$ and  in the same center-stable leaf $L^{cs}$.  
		
		Then the holonomy of the strong stable foliation induces a homeomorphism from $L_1$ to $L_2$. 
\end{lemma}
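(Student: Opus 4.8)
The plan is to show that the strong stable holonomy $\pi^{ss}\colon L_1\to L_2$ (well defined as a continuous map locally, and extendable along $L_1$ because $\cF^{cs}$ is complete by Proposition~\ref{p.complete}) is both injective and surjective, hence a homeomorphism. Continuity being standard for foliation holonomies, the two real issues are global definition (no gaps) and injectivity.

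For \textbf{global definition} I would first note that since $L_1\subset\cN$, by Lemma~\ref{l.exists-limit-maps} there is a limit center map $F\colon L_1\to L_1$ with some fixed point; by Proposition~\ref{p.limit-group}, $\cL^+(L_1)$ acts transitively on $L_1$, and by Lemma~\ref{l.limit-maps} every such map is a homeomorphism. Completeness of $\cF^{cs}$ (Proposition~\ref{p.complete}) gives $\cF^{cs}(x)=\cF^{ss}(\cF^c(x))$ for every $x$, so for any $z\in L_1$ the strong stable leaf $\cF^{ss}(z)$ meets $L_2$: indeed $L_2\subset L^{cs}=\cF^{ss}(\cF^c(w))$ for any $w\in L_2$, and by completeness applied at $z$, $\cF^{ss}(z)$ must intersect $L_2$. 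This defines $\pi^{ss}$ on all of $L_1$; continuity follows from the continuous variation of strong stable plaques together with the continuity of the center foliation.

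For \textbf{injectivity}, suppose $z_1\neq z_2$ in $L_1$ have $\pi^{ss}(z_1)=\pi^{ss}(z_2)=:w\in L_2$, so $z_1,z_2\in\cF^{ss}(w)$, hence $z_2\in\cF^{ss}(z_1)$. This is exactly the configuration forbidden by Corollary~\ref{c.strong-intersection}: since $L_1$ contains a point of $\cN$, it contains $x$ with $\omega(x)=L_1\ni$ (any point), so in particular $L_1$ contains points $p,q$ with $q\in\omega(p)$, and Corollary~\ref{c.strong-intersection} says a strong stable leaf cuts $L_1$ in at most one point. Contradiction. The same argument with $L_1,L_2$ exchanged — using that $L_2\subset\cN$ and the strong stable leaf through a point of $L_2$ cuts $L_2$ at most once — shows $\pi^{ss}$ is also surjective (the inverse map is the holonomy from $L_2$ to $L_1$, which is well defined by the same completeness argument and single-valued by the same Corollary applied to $L_2$). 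Hence $\pi^{ss}$ is a continuous bijection with continuous inverse.

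The \textbf{main obstacle} I anticipate is the global definedness step: one must rule out that strong stable leaves through points of $L_1$ ``escape'' $L^{cs}$ without meeting $L_2$. This is precisely where completeness of the center-stable foliation (Proposition~\ref{p.complete}, itself a consequence of the topologically neutral hypothesis) does the work; without it the holonomy would only be a local homeomorphism defined on a proper open subset. Injectivity, by contrast, is cheap once Corollary~\ref{c.strong-intersection} is in hand, since membership of a point of $\cN$ in $L_1$ immediately provides the recurrence configuration that Corollary needs.
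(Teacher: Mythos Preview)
Your proposal is correct and follows essentially the same route as the paper: completeness of $\cF^{cs}$ (Proposition~\ref{p.complete}) guarantees that every strong stable leaf in $L^{cs}$ meets both $L_1$ and $L_2$, while Corollary~\ref{c.strong-intersection} (applicable because $L_i\subset\cN$ gives $x\in L_i$ with $\omega(x)=M\supset L_i$) forces each such intersection to be a single point, yielding the bijection. The only cosmetic remark is that your digression on $\cL^+(L_1)$ and limit center maps in the ``global definition'' paragraph is unnecessary---completeness alone does the job---and the phrase ``$\omega(x)=L_1$'' should read $\omega(x)=M$.
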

\begin{proof} Recall that the center stable foliation has the completeness property (due to Proposition~\ref{p.complete}).  Thus our assumption says that $L_2$ is contained in the union of the strong stable leaves through $L_1$  which coincides with $L^{cs}$ and conversely.  According to Corollary~\ref{c.strong-intersection},  each strong stable leaf cuts $L_1$ in at most $1$ point, and the same for $L_2$.  Then each strong stable leaf in $L^{cs}$ cuts $L_1$ and $L_2$ in exactly $1$ point respectively  inducing a $1$ to $1$ correspondence, and proving the lemma. 
\end{proof}

\begin{lemma}~\label{l.holonomy-invariance}
	Let $L_1,L_2$ be two center leaves contained in $\cN$ and  in a same center-stable leaf $L^{cs}$.  Let $H^{ss}\colon L_2\to L_1$ be the holonomy of the strong stable foliation given by Lemma~\ref{l.injective-holonomy},  and  $\{\ell_L\}_{ L \mbox{ center leaf in } \cN}$ be a family of metrics in the center leaves, given by Proposition~\ref{p.family-metrics}.  

Then $\ell_{L_1}= (H^{ss})_*(\ell_{L_2})$. 
\end{lemma}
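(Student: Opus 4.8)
The plan is to exhibit the strong stable holonomy $H^{ss}\colon L_2\to L_1$ as a limit center map, and then invoke the holonomy-invariance of the family $\{\ell_L\}$ with respect to limit center maps that is built into Proposition~\ref{p.family-metrics}. More precisely, first I would fix a point $x_2\in L_2\cap\cN$ and let $x_1=H^{ss}(x_2)\in L_1$, so that $x_1\in\cF^{ss}(x_2)$. Since $x_2\in\cN$ we have $\omega(x_2)=M$, so there is a sequence $n_i\to+\infty$ with $f^{n_i}(x_2)\to x_1$; by Lemma~\ref{l.exists-limit-maps} (applied along the center leaf $L_2$), after passing to a subsequence the restrictions $f^{n_i}|_{L_2}$ converge to a limit center map $F\colon L_2\to L_1$ with $F(x_2)=x_1$. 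The key point is that $F$ \emph{is} the strong stable holonomy $H^{ss}$: for any $y_2\in L_2$, the strong stable distance $\ud^{ss}(f^{n_i}(x_2),f^{n_i}(x_1))$ contracts to $0$ because $x_1\in\cF^{ss}(x_2)$ and $n_i\to+\infty$; thus $f^{n_i}(x_1)$ and $f^{n_i}(x_2)$ stay on the same strong stable leaf at vanishing distance. Since $H^{ss}(y_2)$ is the unique intersection of $\cF^{ss}(y_2)$ with $L_1$ (Corollary~\ref{c.strong-intersection} and Lemma~\ref{l.injective-holonomy}), and $f$ preserves the strong stable foliation, one gets that $f^{n_i}(H^{ss}(y_2))$ is the point of $\cF^{ss}(f^{n_i}(y_2))\cap f^{n_i}(L_1)$; the neutral property keeps $f^{n_i}(y_2)$ at bounded center distance from $f^{n_i}(x_2)$, forcing $f^{n_i}(H^{ss}(y_2))$ and $f^{n_i}(y_2)$ together, hence in the limit $F(y_2)=H^{ss}(y_2)$.

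Once $H^{ss}=F\in\cL(L_2,L_1)$ is established, the first bullet of Proposition~\ref{p.family-metrics} applied to $F$ gives directly $F_*(\ell_{L_2})=\ell_{L_1}$, i.e. $\ell_{L_1}=(H^{ss})_*(\ell_{L_2})$, which is exactly the assertion. (Here I use implicitly the first item of Lemma~\ref{l.limit-map-between-different-center} that $F$ is a homeomorphism, so that the pushforward metric $F_*(\ell_{L_2})$ makes sense, and this is consistent with $H^{ss}$ being a homeomorphism from Lemma~\ref{l.injective-holonomy}.)

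I expect the main obstacle to be the identification $F=H^{ss}$, i.e. controlling that the pointwise limit of $f^{n_i}|_{L_2}$ along a sequence chosen only to push $x_2$ toward $x_1$ in fact realizes the strong stable holonomy at every point of $L_2$, not merely at $x_2$. The argument rests on two facts pulling in the same direction: strong stable contraction for forward iterates, which collapses $\cF^{ss}$-distances to $0$, and the topologically neutral property, which keeps center distances along $L_2$ from blowing up, so that the whole leaf $L_2$ (after iteration) lies in a region where the local product structure $\cF^{ss}\times\cF^c$ is uniform and the holonomy is uniformly close to a genuine center displacement. One has to be a little careful that $f^{n_i}(L_1)$ and $f^{n_i}(L_2)$ remain in the same center stable plaque and that the strong stable segments connecting corresponding points stay short; this follows because $\ud^{ss}(f^{n_i}(x_1),f^{n_i}(x_2))\to0$ and, by Proposition~\ref{p.complete}, the center stable leaf containing both is saturated by strong stable leaves through $L_1$. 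Modulo this bookkeeping the proof is short.
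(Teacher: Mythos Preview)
Your strategy---realise the strong stable holonomy as a limit center map and then invoke Proposition~\ref{p.family-metrics}---is sound, and in fact $H^{ss}$ \emph{is} an element of $\cL(L_2,L_1)$. However, your argument for the identification $F=H^{ss}$ has a genuine gap. From stable contraction you correctly deduce that, for every $y_2\in L_2$, the iterates $f^{n_i}(y_2)$ and $f^{n_i}(H^{ss}(y_2))$ have the \emph{same} limit; this common limit is $F(y_2)$. But nothing in what you wrote forces that common limit to equal the \emph{fixed} point $H^{ss}(y_2)\in L_1$. What you have actually shown is that $f^{n_i}|_{L_1}$ converges to a limit center map $G\colon L_1\to L_1$ with $F=G\circ H^{ss}$. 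Your choice $x_1=H^{ss}(x_2)$ together with stable contraction does give $f^{n_i}(x_1)\to x_1$, so $G(x_1)=x_1$; by Lemma~\ref{l.limit-maps}(4) this yields $G=\Id_{L_1}$ \emph{only if $G$ preserves orientation}. If $G$ happens to be orientation reversing (which Proposition~\ref{p.limit-group} allows), then $G$ is a non-trivial involution and $F\neq H^{ss}$ for your sequence.

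The paper's proof avoids this difficulty by not attempting to identify $F$ with $H^{ss}$. It picks any $F\in\cL^s(L_2,L_1)$ (no constraint on the target of $x_2$), observes as above that the same sequence produces a limit center map $\tilde F:=F\circ (H^{ss})^{-1}\in\cL(L_1)$ on $L_1$, and then simply uses $\tilde F_*(\ell_{L_1})=\ell_{L_1}$ together with $F_*(\ell_{L_2})=\ell_{L_1}$ to cancel $F_*$ and obtain $(H^{ss})^{-1}_*(\ell_{L_1})=\ell_{L_2}$. This is the computation you are implicitly aiming at, but stated without the unnecessary (and, as written, unjustified) claim $F=H^{ss}$. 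Your proof can be repaired either by inserting this argument, or by noting that $H^{ss}=G^{-1}\circ F$ with $G\in\cL(L_1)$ and invoking Lemma~\ref{l.limit-maps}(3) to conclude $H^{ss}\in\cL(L_2,L_1)$ directly.
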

\begin{proof} 
	Let us fix a sequence $n_i\to +\infty$ so that the restriction $f^{n_i}|_{L_2}$ converges to a limit center map $F\in\cL^s(L_2,L_1)$. 

According to Proposition~\ref{p.family-metrics} one has 
$$F_*(\ell_{L_2})= \ell_{L_1}.$$

On the other hand, as we are iterating positively, points in the same strong stable leaf have the same limit, therefore the restriction $f^{n_i}|_{L_1}$ converges to 
$F\circ (H^{ss})^{-1}= \tilde F\in\cL^s(L_1,L_1)$. Thus

$$(F\circ (H^{ss})^{-1})_*(\ell_{L_1})= \tilde F_*(\ell_{L_1})= \ell_{L_1}.$$

One deduces 
$$ F_*( (H^{ss})^{-1}_*(\ell_{L_1}))= F_*(\ell_{L_2})$$
that is 
$$\ell_{L_2}= (H^{ss})^{-1}_*(\ell_{L_1})$$
which concludes the proof. 
\end{proof}
 Remark~\ref{r.f-invriance-of-metric} gives the $f$-invariance of the center metric   defined on $\cN$. The next proposition gives a continuous family of metric on all the center leaves, therefore  ends the proof of Theorem~\ref{thm.main-metric}.
\begin{proposition} 
		Let $f$ be a $C^1$ partially hyperbolic diffeomorphism. Assume that $f$ is transitive and  has $1$-dimensional topologically neutral center.
	Let $\{\ell_L\}_{ L \mbox{ center leaf in } \cN}$ be a family of metrics in the center leaves, given by Proposition~\ref{p.family-metrics}. 
	
	Then this family of metrics in the center leaves in $\cN$ can be  extended in a unique way, by continuity, to all center leaves, defining a center metric on $M$. 
\end{proposition}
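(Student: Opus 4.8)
The plan is to define the extension by a limiting procedure using the density of $\cN$, and to reduce both its well-definedness and its continuity to one uniform estimate on the family $\{\ell_L\}_{L\subset\cN}$ of Proposition~\ref{p.family-metrics}.

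\medskip
\noindent\emph{Construction.} By continuity of the center foliation, every $z\in M$ has a neighbourhood $B$ which is a joint foliation box for $\cF^c,\cF^{cs},\cF^{cu},\cF^{ss},\cF^{uu}$, so that any two local center plaques inside $B$ are canonically identified by sliding along the transverse coordinates. In particular a short center arc $\sigma$ through $z$ has a canonical ``parallel copy'' $\sigma_L$ in every center plaque $L\cap B$. Since $\cN$ is residual, hence dense, and is saturated by center leaves (Proposition~\ref{p.limit-group}), one may choose $z_n\to z$ with $L_n:=\cF^c(z_n)\subset\cN$ and put $\ell^c(\sigma):=\lim_n\ell_{L_n}(\sigma_{L_n})$; one then extends to arbitrary (long, possibly wrapping) center arcs by subdividing into short pieces and summing, the consistency of this being automatic. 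Thus everything reduces to showing that the limit above exists and is independent of $(z_n)$.

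\medskip
\noindent\emph{The key estimate.} It suffices to prove: there are a modulus $\omega(\delta)\to 0$ and a constant $C>1$ such that for all $L,L'\subset\cN$ and all short center arcs $\gamma\subset L$, $\gamma'\subset L'$ whose endpoints are $\delta$-close in $M$ and correspond through a common foliation box, $|\ell_L(\gamma)-\ell_{L'}(\gamma')|<\omega(\delta)$ and $\tfrac1C\le\ell_L(\gamma)/\mathrm{len}(\gamma)\le C$. For the case $L=L'$ I would use Proposition~\ref{p.limit-group}: on each $L\subset\cN$ the metric $\ell_L$ is, up to a global scalar, the pull-back of the Euclidean metric (on $\RR$ or $\RR/\ZZ$) by a homeomorphism $\psi_L$ conjugating $\cL^+(L)$ to the translations (resp.\ rotations); one then shows, using the two-sided uniform topological neutrality of limit center maps given by the first item of Lemma~\ref{l.limit-maps}, that $\psi_L$ is comparable to arclength with constants and a modulus independent of $L\subset\cN$. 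For $L\neq L'$ I would exploit that, inside the foliation box, $L$ and $L'$ are joined by a strong-stable holonomy within one center-stable leaf — complete by Proposition~\ref{p.complete} — followed by a strong-unstable holonomy within one center-unstable leaf, both of these being $\delta$-close to the identity; combining the holonomy invariance of the family (Lemma~\ref{l.holonomy-invariance} and its strong-unstable analogue) with the $L=L'$ estimate and with the density of $\cN$ to route the intermediate center leaf through $\cN$, one transports $\ell_L$ onto $\ell_{L'}$ up to an error governed by $\omega$ and $\delta$.

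\medskip
\noindent\emph{Conclusion.} Given the estimate: the $\ell_{L_n}(\sigma_{L_n})$ form a Cauchy sequence whose limit does not depend on $(z_n)$, so $\ell^c$ is well defined; the lower bound yields $\ell^c(\sigma)\ge\tfrac1C\mathrm{len}(\sigma)>0$ on non-degenerate arcs; additivity passes to the limit; and applying the estimate once more, now allowing one of the two leaves to lie outside $\cN$ and approximating it in the same way, shows that $\ell^c$ varies continuously along every $C^0$-continuous family of center arcs — hence $\ell^c$ is a center metric on $M$. Its $f$-invariance and its invariance under the strong holonomies pass from $\cN$ to $M$ by density and continuity, and uniqueness of the extension is immediate since a center metric is continuous while the center leaves contained in $\cN$ are dense. \emph{The main obstacle} is the distinct-leaf case of the key estimate: two nearby center leaves in $\cN$ generically lie neither in a single center-stable nor in a single center-unstable leaf, so relating $\ell_L$ and $\ell_{L'}$ forces one to chain a strong-stable and a strong-unstable holonomy while keeping the intermediate center leaf under control, which is precisely where completeness, the uniform bounds on limit center maps, and the density of $\cN$ all enter together.
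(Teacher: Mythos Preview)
Your overall strategy matches the paper's: work in foliation boxes, approximate by center leaves in $\cN$, and use the holonomy invariance of Lemma~\ref{l.holonomy-invariance} (and its unstable analogue) to show the limit is well defined. But there is a genuine gap at exactly the point you flag as ``the main obstacle''.

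To compare $\ell_L$ with $\ell_{L'}$ for nearby $L,L'\subset\cN$ you pass through the intermediate plaque $L''=W^s_{loc}(L)\cap W^u_{loc}(L')$, and you propose to ``route the intermediate center leaf through $\cN$'' by density. That is circular: replacing $L''$ by some nearby $L'''\subset\cN$ puts you back in the distinct-leaf situation you are trying to control, and there is no reason the approximant can be chosen in the same center-stable (resp.\ center-unstable) leaf as $L$ (resp.\ $L'$). The paper's resolution is a one-line observation you are missing: the intermediate plaque $L''$ lies in $\cN$ automatically. Indeed, any $z\in L''$ is on $\cF^{ss}(w)$ for some $w\in L\subset\cN$, so $\omega(z)=\omega(w)=M$; and $z\in\cF^{uu}(w')$ for some $w'\in L'\subset\cN$, so $\alpha(z)=\alpha(w')=M$; hence $z\in\cN$. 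With this in hand, Lemma~\ref{l.holonomy-invariance} applies directly on both legs, and the uniform continuity of the local $ss$/$uu$ holonomies in the compact foliated chart gives the Cauchy property with no further estimate needed.

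A secondary point: the bi-Lipschitz claim $C^{-1}\le \ell_L(\gamma)/\mathrm{len}(\gamma)\le C$ is stronger than what item~1 of Lemma~\ref{l.limit-maps} yields. That lemma gives a uniform two-sided modulus for limit center maps, hence a uniform two-sided modulus (not a Lipschitz constant) relating $\ell_L$ and arclength; this weaker statement is what you can prove, and it already suffices for positivity of the extension and for the continuity argument. Once you replace the density workaround by the observation above and downgrade bi-Lipschitz to a uniform modulus, your argument becomes the paper's.
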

\begin{proof} 
	We denote $s=\dim(E^{s})$ and $u=\dim(E^{u})$. 
We consider a finite open  cover $\{V_i\} $ of $M$ given by   compact $C^0$-foliated boxes 
$\varphi_i\colon [-2,2]^s\times [-2,2]^u\times [-2,2]\to M$ so that : 
\begin{itemize}
	 \item $V_i=\varphi_i( (-1,1)^s\times (-1,1)^u\times (-1,1))$;
 \item each square $[-2,2]^s\times\{y\}\times [-2,2]$ is contained in a center stable leaf, 
 \item each square $\{x\}\times [-2,2]^u\times [-2,2]$ is contained in a center unstable stable leaf. 

 \item for any two points $(x_1,y_1), (x_2,y_2)\in [-1,1]^s\times [-1,1]^u$,
 as both $W^{ss}_{loc}((x_1,y_1)\times[-2,2])\cap W^{uu}_{loc}((x_2,y_2)\times[-2,2])$ and $W^{uu}_{loc}((x_1,y_1)\times[-2,2])\cap W^{ss}_{loc}((x_2,y_2)\times[-2,2])$ consist of a unique center path $L_1$ and $L_2$ respectively, then the local strong stable (resp. strong unstable ) holonomy map sends $(x_1,y_1)\times[-1,1]$ into $L_1$ (resp. $L_2$) and its image is sent by the local strong unstable (resp. strong stable) holonomy map into the interior of $(x_2,y_2)\times[-2,2]$.
\end{itemize}

Let us denote $U_i=\varphi_i([-2,2]^s\times [-2,2]^u\times [-2,2]).$ 
 For each $p\in U_i$ (resp. $V_i$), we  denote by $L_{p}|_{U_i}$ (resp. $L_{p}|_{V_i}$)   the connected component of $L_p\cap U_i$ (resp. $L_p\cap V_i$) containing the point $p$, where $L_p$ denotes the center leaf through $p$.  

We define a metric $\ell_i$ on center segments contained in $V_i$ as follows. 
As $\cN$ is a dense subset of $M$, for each point $p\in V_i$, there exists a sequence of points $\{q_n\}_{n\in\NN}\subset \cN$ with $q_n$ tends to $p$.  For $n_1$ and $n_2$ large, the intersection $W^s_{loc}(L_{q_{n-1}}|_{U_i})\cap W^u_{loc}(L_{q_{n_2}}|_{U_i})$ is non-empty and is  contained in $\cN$. As the center metric is invariant under strong stable and unstable holonomies, by the uniform continuity of the local strong stable and unstable holonomies in $U_i$, one deduces that the center metric on $L_{q_n}|_{U_i}$ uniquely induces a center metric on the $L_p|_{V_i}$, hence one gets a  metric on each center plaque in $V_i$,  moreover the uniqueness gives the continuity of the center metric in $V_i$.   Notice  that the center metric on each center path is independent of the choice of $V_i$ which allows us to define the center metric on the whole center leaf.  
Since the center metric on $\cN$ is invariant under the dynamics $f$ and invariant under the strong stable and unstable holonomies, by the continuity of the center metric and the strong stable and unstable holonomies, the center metric is invariant everywhere under the dynamics and the strong stable and unstable holonomies.

\end{proof}

\section{Existence of periodic compact center leaves }

In this section, we first work in any dimension and  show that for partially hyperbolic diffeomorphisms with $1$-dimensional topologically  neutral center, if there exist  compact center leaves, then there exist  periodic compact center leaves. Then we give some consequences in  dimension three.

The following general result is needed in this part.
\begin{lemma}~\label{l.intersects into compact center leaf}
Let $f$ be a dynamically coherent partially hyperbolic diffeomorphism with one-dimensional center and $L$ be a compact center leaf. For  $\e>0$ small, there exists $\delta>0$ such that for any compact center leaf $L^\prime$  in the $\delta$-tubular neighborhood of $L$,
	 the intersection $W^u_\e(L)\cap W^s_\e(L^\prime)$ consists of finitely many  compact center leaves.  
\end{lemma}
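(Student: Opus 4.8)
The plan is to localise the whole statement near the compact leaf $L$: reduce it to a local product picture in a tubular neighborhood of $L$, and then control the intersection by a compactness argument rather than by perturbative stability of transverse intersections (which is unavailable, since the leaves of $\cF^{cs}$ and $\cF^{cu}$ vary only continuously with the base point).

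First I would fix the scale. Since $L$ is compact and the strong foliations $\cF^{ss},\cF^{uu}$ are continuous and transverse to the center foliation, for all sufficiently small $\e>0$ the sets $W^u_\e(L)=\bigcup_{y\in L}W^{uu}_\e(y)$ and $W^s_\e(L)=\bigcup_{y\in L}W^{ss}_\e(y)$ are embedded compact disk bundles over $L$, their boundaries (the ``$\e$-sphere bundles'') are disjoint from $L$, and $W^{ss}_\e(y)\cap L=\{y\}$ for every $y\in L$; these are routine consequences of compactness of $L$. The heart of the matter is that, using in addition the transversality of $E^c\oplus E^u$ and $E^c\oplus E^s$ along the compact leaf $L$, the coordinate map around $L$ obtained by following a short strong-unstable leaf from a point of $L$ and then a short strong-stable leaf is a homeomorphism onto a neighborhood of $L$ when $\e$ is small, and consequently
$$W^u_\e(L)\cap W^s_\e(L)=L .$$
I expect this identity --- equivalently, this local product chart around the \emph{compact} (not merely periodic) center leaf $L$ --- to be the main obstacle; it is exactly what forbids the local stable and unstable manifolds of $L$ from producing any center leaf other than $L$ itself. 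Fix such an $\e$; then $W^u_\e(L)$ is contained in the $\e'$-neighborhood of $L$ for some $\e'=\e'(\e)$ with $\e'\to0$ as $\e\to0$.

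Next I would rule out the boundary. For a compact center leaf $L'$ set $Z(L')=W^u_\e(L)\cap W^s_\e(L')$; this is compact and, for \emph{any} $L'$, contained in $W^u_\e(L)$ and hence in the $\e'$-neighborhood of $L$. I claim that for $\delta$ small enough (depending on $L$ and $\e$), whenever $L'$ lies in the $\delta$-tubular neighborhood of $L$ the set $Z(L')$ is disjoint from $\partial W^u_\e(L)\cup\partial W^s_\e(L')$. I would argue by contradiction: were this false, there would be compact center leaves $L'_n$ in the $(1/n)$-tubular neighborhood of $L$ and points $q_n\in Z(L'_n)\cap\big(\partial W^u_\e(L)\cup\partial W^s_\e(L'_n)\big)$. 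Writing $q_n\in W^{ss}_\e(y'_n)$ with $y'_n\in L'_n$, one has $\ud(q_n,y'_n)\le\e'$; since $q_n$ lies in the $\e'$-neighborhood of $L$ and $L'_n$ shrinks onto $L$, along a subsequence $q_n\to q_*$ and $y'_n\to y'_*\in L$. Continuity of $\cF^{ss}$ gives $q_*\in W^{ss}_\e(y'_*)\subset W^s_\e(L)$, and $q_*\in W^u_\e(L)$ because this set is closed, so $q_*\in W^u_\e(L)\cap W^s_\e(L)=L$. If $q_n\in\partial W^u_\e(L)$ for infinitely many $n$, this contradicts $\partial W^u_\e(L)\cap L=\emptyset$; otherwise $q_n\in\partial W^{ss}_\e(y'_n)$ for infinitely many $n$, hence $q_*\in\partial W^{ss}_\e(y'_*)$, which together with $q_*\in L$ contradicts $W^{ss}_\e(y'_*)\cap L=\{y'_*\}$. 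Fix such a $\delta$.

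Finally, for $L'$ in the $\delta$-tubular neighborhood of $L$ the set $Z(L')$ avoids both boundary sets, so each of its points $p$ is interior to both $W^u_\e(L)$ and $W^s_\e(L')$; near such a $p$, $W^u_\e(L)$ coincides with a center-unstable plaque through $p$ and $W^s_\e(L')$ with a center-stable plaque through $p$, and since $\cF^c=\cF^{cs}\cap\cF^{cu}$ it follows that $Z(L')$ coincides, near $p$, with the local center plaque through $p$. Hence $Z(L')$ is a compact boundaryless $1$-manifold tangent to $E^c$, thus a finite disjoint union of circles, each of which --- being connected and tangent to the center foliation --- is a compact center leaf; this is the assertion. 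As stressed above, the one genuinely technical ingredient is the identity $W^u_\e(L)\cap W^s_\e(L)=L$ for small $\e$; granting it, together with the routine small-$\e$ facts recorded in the second paragraph, everything else is the soft compactness argument just described.
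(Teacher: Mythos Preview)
Your argument is correct and reaches the same conclusion as the paper, but the route is different. The paper proceeds directly and quantitatively: it first fixes a tubular neighborhood of $L$ with transverse fibers, then uses the uniform transversality of $E^u$ and $E^s\oplus E^c$ to choose $\delta\in(0,\e/4)$ so that for any $x,y$ with $\ud(x,y)<\delta$ one has $\cF^{uu}_\e(x)\cap\cF^{cs}_\e(y)=\cF^{uu}_{\e/2}(x)\cap\cF^{cs}_{\e/2}(y)$ and this intersection is a single point. From this, for each $x\in L$ the set $\cF^{uu}_\e(x)\cap W^s_\e(L')$ is immediately finite and $\e/2$ away from both boundaries, and the conclusion follows. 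Your approach instead isolates the identity $W^u_\e(L)\cap W^s_\e(L)=L$ as the key geometric input and then runs a soft limiting argument: if the boundary were hit for arbitrarily small $\delta$, passing to the limit $L'_n\to L$ would produce a point of $W^u_\e(L)\cap W^s_\e(L)$ either on $\partial W^u_\e(L)$ or on $\partial W^{ss}_\e(y'_*)$ with $y'_*\in L$, contradicting that identity together with $W^{ss}_\e(y)\cap L=\{y\}$.

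What each buys: the paper's argument is shorter, gives an explicit $\delta$ in terms of the uniform local product structure, and avoids any auxiliary statement about $W^u_\e(L)\cap W^s_\e(L)$. Your argument is more conceptual---it makes transparent that the only obstruction is a possible self-intersection of the local stable and unstable manifolds of $L$---and the final step (the intersection is a compact boundaryless $1$-manifold tangent to $E^c$, hence a finite union of compact center circles) is spelled out more explicitly than in the paper. The identity $W^u_\e(L)\cap W^s_\e(L)=L$ that you flag as the technical point is indeed the crux; it follows from the same uniform transversality the paper invokes (for $y_1,y_2\in L$, any point of $\cF^{uu}_\e(y_1)\cap\cF^{ss}_\e(y_2)$ lies in $\cF^{uu}_\e(y_1)\cap\cF^{cs}_{loc}(y_1)=\{y_1\}$, and then $y_1\in\cF^{ss}_\e(y_2)\cap L=\{y_2\}$), so no circularity arises.
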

\proof
Let $\e_0>0$ be  small  enough such that one can defined a $\e_0$-tubular-neighborhood $V$ of $L$ together with a $C^1$ projection $\pi: V\to L$ such that each fiber $\pi^{-1}(x)$ (for $x\in L$) is transverse to the center foliation.

For $\e\in(0,\e_0)$, by the uniform transversality between  $E^{s}\oplus E^c$ and $E^u$,  there exists $\delta\in(0,\e/4)$  such that for any two points $x,y \in M$ with  $\ud(x,y)<\delta$, one has  
\begin{itemize}
	\item the intersection $\cF_\e^{uu}(x)\cap\cF^{cs}_\e(y)$ consists of exactly one point;
	\item $\cF_{\e/2}^{uu}(x)\cap\cF^{cs}_{\e/2}(y)=\cF_\e^{uu}(x)\cap\cF^{cs}_\e(y)$.
\end{itemize} 
 Let $L^\prime$ be a compact center leaf in the $\delta$-tubular neighborhood of $L$.
 Then for any $x\in L$, by the choice of $\delta$, one has that $\cF^{uu}_\e(x)\cap W^s_\e(L^\prime)$ consists of finitely many points and is $\e/2$ away from the boundaries  of $\cF^{uu}_\e(x)$ and $W^s_\e(L^\prime)$.  This gives that 
 $W^u_\e(L)\cap W^s_\e(L^\prime)$  consists of finitely many compact center leaves.
\endproof

In the following, we consider the case that there exists a compact center leaf $\gamma$ for a partially hyperbolic diffeomorphism with topologically neutral center. We will show that one can always find a compact and periodic center leaf. 
The proof uses the notion of bad sets for a compact lamination introduced  in ~\cite{E} and a  Bowen-type shadowing lemma given in appendix (see also ~\cite{BB, Ca}).
\begin{proposition}~\label{p.existence-of-peiodic-compact-center-leaf}
Let $f$ be a partially hyperbolic diffeomorphism. Assume that $f$ has  $1$-dimensional topologically neutral center and  admits a compact center leaf  $\gamma$.
Then $f$ has a compact periodic   center leaf. 

Moreover, if $\gamma$ is not periodic,   then there exists a compact periodic center leaf whose center stable manifold contains another different compact center leaf.
\end{proposition}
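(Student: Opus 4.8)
The plan is to use the compactness of the center leaves together with the topologically neutral property to build a compact, $f$-invariant family of compact center leaves on which $f$ acts, and then exploit the structure of this family to extract a periodic leaf. First I would observe that since $\gamma$ is compact, the set of compact center leaves in a uniform tubular neighborhood forms a compact lamination (by Lemma~\ref{l.intersects into compact center leaf} and continuity of the center foliation together with topological neutrality, iterates of $\gamma$ stay of uniformly bounded size and uniformly far from being non-compact). Let $\Lambda$ be the closure of the union of all iterates $f^n(\gamma)$, $n\in\ZZ$; by topological neutrality this is a compact set saturated by compact center leaves, hence a compact center lamination, and it is $f$-invariant.

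The next step is to pass to the quotient: consider the quotient space $\Lambda/{\sim}$ where each compact center leaf is collapsed to a point. This is a compact metric space (the holonomy/transversal structure of the lamination makes it Hausdorff), and $f$ descends to a homeomorphism $\bar f$ of $\Lambda/{\sim}$. Any homeomorphism of a nonempty compact metric space has a minimal set, and in fact here I want a \emph{recurrent} point in the quotient, which pulls back to a recurrent compact center leaf $L_0$, i.e. there is $n_i\to\infty$ with $f^{n_i}(L_0)\to L_0$ in the Hausdorff topology. Now the Bowen-type shadowing lemma alluded to in the statement (and proved in the appendix) applied to this lamination — or a direct argument using Lemma~\ref{l.intersects into compact center leaf} to control how $f^{n_i}(L_0)$ sits relative to $L_0$ via local strong-stable/strong-unstable coordinates — should produce a genuine periodic compact center leaf near $L_0$. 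The mechanism is: $f^{n_i}(L_0)$ lies in a small tubular neighborhood of $L_0$, and the map induced on a small transversal by $f^{n_i}$, followed by the center-lamination holonomy, has a fixed point by a contraction/shadowing argument in the stable$\times$unstable directions; that fixed point lies on a compact center leaf which is then periodic.

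For the "moreover" part, suppose $\gamma$ is not periodic. Then its orbit closure $\Lambda$ is not a single periodic orbit of leaves, so $\Lambda$ contains infinitely many compact center leaves accumulating somewhere; in particular, near the periodic compact center leaf $L_p$ produced above (which can be taken inside $\Lambda$, since the shadowing argument produces it near a recurrent leaf in $\Lambda$) there are other compact center leaves of $\Lambda$ arbitrarily close. Pick such a leaf $L'$ in the $\delta$-tubular neighborhood of $L_p$ with $\delta$ as in Lemma~\ref{l.intersects into compact center leaf}; then $W^u_\e(L_p)\cap W^s_\e(L')$ consists of finitely many compact center leaves, so $W^s_\e(L_p)$ (equivalently the center-stable manifold of $L_p$, using completeness from Proposition~\ref{p.complete} when in dimension $3$, or the local product structure in general) meets a compact center leaf distinct from $L_p$. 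One has to be slightly careful that this second leaf is genuinely \emph{different} from $L_p$: this is where non-periodicity of $\gamma$ is used, guaranteeing $\Lambda$ is not reduced to the single leaf $L_p$, so accumulation forces an honest nearby leaf, and the intersection $W^u_\e(L_p)\cap W^s_\e(L')$ witnesses it inside $W^s(L_p)$.

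\textbf{Main obstacle.} The delicate point is the passage from a \emph{recurrent} compact center leaf $L_0$ (returning close to itself in Hausdorff distance along $n_i\to\infty$) to an \emph{honestly periodic} one. In dimension one for the center this amounts to showing the induced return map on a small stable$\times$unstable transversal, composed with center-holonomy of the lamination, has a genuine fixed point; the hyperbolic contraction in $E^s$ (for forward iterates) is what one wants to invoke, but one must handle that $L_0$ itself need not be in the "good set" of the lamination in Epstein's sense, so the shadowing/contraction argument must be run relative to the bad set — this is exactly why the statement invokes the bad-set machinery of~\cite{E} and the appendix shadowing lemma, and getting those to interlock cleanly is the technical heart of the argument.
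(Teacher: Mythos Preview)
Your outline is essentially the paper's approach, and you correctly locate the crux. Two points deserve sharpening. First, there is an internal tension in your plan: you assert that the leaf quotient $\Lambda/{\sim}$ is a compact metric space, yet this is precisely what may fail and is what you flag as the main obstacle. The paper does not attempt a global quotient; instead it runs Epstein's bad-set filtration directly on the length function $\ell\colon\Lambda\to\RR^+$ (which is lower semi-continuous), setting $\Lambda_{i+1}$ to be the discontinuity locus of $\ell|_{\Lambda_i}$, until it reaches a nonempty compact $f$-invariant $\Lambda_{i_0}$ on which $\ell$ is continuous, and only then applies the appendix shadowing lemma (Proposition~\ref{p.shadow-periodic}) to that sublamination. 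Second, the paper obtains the ``moreover'' clause more directly than you do, by a case split: if $\Lambda$ already contains a periodic compact leaf $L$, bring some $f^n(\gamma)$ into a small tubular neighborhood of $L$ and invoke Lemma~\ref{l.intersects into compact center leaf}; if not, the periodic leaf produced by Proposition~\ref{p.shadow-periodic} lies outside $\Lambda_{i_0}$, and the ``Furthermore'' clause of that proposition already delivers a second compact center leaf in its center-stable manifold. Your alternative argument for the ``moreover'' part is viable, but note you should look at $W^s_\e(L_p)\cap W^u_\e(L')$, not $W^u_\e(L_p)\cap W^s_\e(L')$, to land inside the center-stable manifold of $L_p$.
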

\proof
If $\gamma$ is periodic, we are done.

Now, we assume that $\gamma$ is non-periodic. Let $x\in\gamma$, then we consider the $\omega$-limit set $\omega(x)$ of $x$.
By  topologically neutral property,  there exists a compact $f$-invariant set $\La$ saturated by compact center leaves whose length are uniformly bounded. If $\La$ contains a compact periodic center leaf $L$, then for an arbitrarily small tubular neighborhood of $L$, by topologically neutral property, there exists $n\in\NN$ such that $f^n(\gamma)$ is entirely contained in the tubular neighborhood of $L$, thus, one can apply Lemma~\ref{l.intersects into compact center leaf} to conclude.

Now, we only need to deal with the case that $\La$ does not contain periodic center leaves.
We define a function $\ell:\La\mapsto\RR^+$ by associating $x\in\Lambda$ to the length of the center leaf through $x$.
By   continuity of the center foliation, the function $\ell$ varies lower semi-continuously. Now we define the bads set for $\ell$.
Let us denote $\La_0=\La$. For $i\in\NN$, one defines the $(i+1)$-th bad set by $$\La_{i+1}=\big\{x\in\La_i: \textrm{$\ell|_{\La_i}$ is not continuous at $x$}\big\}.$$ 
The $f$-invariance of the center foliation implies that $\La_i$ is  $f$-invariant.  Notice that $\ell|_{\La_i}$ is   continuous at $x\in\La_i$ if and only if the center holonomy group restricted to $\La_i$ for $\cF^c(x)$ is  trivial, hence the continuous points of $\ell_{\La_i}$ form  an open set which implies that $\La_{i+1}$ is compact.
Since the length of center leaves in $\La$ are uniformly bounded from above, there exists $i_0\in\La$ such that $\ell|_{\La_{i_0}}$ is a continuous map.  By Proposition~\ref{p.shadow-periodic}, arbitrarily close to $\La_{i_0}$, there exists a compact and periodic center leaf  $L$  
 whose stable manifold  contains another compact center leaf. 
\endproof

As an application, we obtain the following consequence on $3$-manifolds.
\begin{proposition}~\label{p.all-compact}
	Let $f$ be a transitive partially hyperbolic diffeomorphism on a closed 3-manifold $M$.
	Assume that 
	\begin{itemize}
		\item $f$ has one dimensional topologically neutral center;
		\item  there exist two different compact center leaves which are in the same center stable leaf.
	\end{itemize} 
Then up to finite lifts and iterates, $f$ is $C^0$-conjugate to a skew-product.
\end{proposition}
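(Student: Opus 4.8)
The plan is to reduce to the first item of Theorem~\ref{thm.bonatti-wilkinson} (Bonatti--Wilkinson), which asserts that a transitive dynamically coherent partially hyperbolic diffeomorphism on a $3$-manifold possessing a \emph{compact invariant} center leaf $L$ such that $W^s_\delta(L)\cap W^u_\delta(L)\setminus\{L\}$ contains a compact center leaf is $C^0$-conjugate, up to finite lifts, to a skew-product. Our hypothesis only gives two distinct compact center leaves $\gamma_1,\gamma_2$ in a common center-stable leaf; neither need be periodic. So the first task is to manufacture a \emph{periodic} compact center leaf with the right intersection property, and then to arrange, after passing to an iterate, that it is actually invariant.

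First I would note that the existence of one compact center leaf plus Proposition~\ref{p.existence-of-peiodic-compact-center-leaf} already yields a compact \emph{periodic} center leaf $L_0$; moreover, since $\gamma_1$ (say) is either periodic or not, the ``moreover'' clause of that proposition produces, in either case, a compact periodic center leaf $L_0$ of period $k$ whose \emph{stable} manifold $W^s(L_0)$ contains another compact center leaf $L_0'\neq L_0$. Passing to $g=f^k$, the leaf $L_0$ is $g$-invariant and $L_0'\subset W^s(L_0,g)$. Next I would upgrade ``$L_0'$ in the stable manifold'' to ``$L_0'$ in $W^s_\delta(L_0)\cap W^u_\delta(L_0)$'': iterate $L_0'$ forward under $g$ so that $g^n(L_0')$ lies in an arbitrarily small tubular neighborhood of $L_0$ (here I use both that $L_0'\subset W^s(L_0)$ and the topologically neutral property to keep lengths controlled), and then apply Lemma~\ref{l.intersects into compact center leaf} to conclude that $W^u_\e(L_0)\cap W^s_\e(g^n(L_0'))$ consists of finitely many compact center leaves; choosing one of them that is distinct from $L_0$ (nonemptiness and distinctness need a short argument using that $g^n(L_0')\neq L_0$) gives a compact center leaf in $W^s_\e(L_0)\cap W^u_\e(L_0)\setminus\{L_0\}$. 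At that point the hypotheses of the first item of Theorem~\ref{thm.bonatti-wilkinson} are met for $g$, so $g=f^k$ is, up to a further finite lift, $C^0$-conjugate to a skew-product.

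The delicate point — and the one I expect to be the main obstacle — is the bookkeeping needed to guarantee that the compact center leaf produced in $W^u_\e(L_0)\cap W^s_\e(g^n(L_0'))$ is genuinely different from $L_0$, i.e. that the intersection is not just $\{L_0\}$ itself. This requires knowing that $g^n(L_0')$, while close to $L_0$, is not equal to $L_0$ and sits ``transversally'' enough that $W^u_\e(L_0)$ meets $W^s_\e(g^n(L_0'))$ in a leaf other than $L_0$; here I would use that the strong unstable foliation is genuinely expanding and that $L_0'$ and $L_0$, being distinct leaves in the same center-stable leaf, are separated along strong stable holonomy, so their forward images stay distinct. A second, smaller subtlety is the role of finite lifts/iterates in the statement: one should observe that the conclusion ``$C^0$-conjugate to a skew-product'' is stable under taking finite lifts and iterates (a skew-product over a linear Anosov on $\TT^2$ with circle rotations remains, up to finite lift and iterate, of the same type), so replacing $f$ by $f^k$ and then by a finite lift to apply Theorem~\ref{thm.bonatti-wilkinson} is harmless. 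The transitivity hypothesis on $f$ is used to ensure $g=f^k$ is still transitive — this is exactly Proposition~\ref{p.lift} (which does not require the center to be one-dimensional) — so that Theorem~\ref{thm.bonatti-wilkinson} applies to $g$.
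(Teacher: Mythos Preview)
Your overall strategy---reduce to a periodic compact leaf $L_0$ with a second compact leaf $L_0'\subset W^s(L_0)$, then feed a compact homoclinic leaf into the first item of Theorem~\ref{thm.bonatti-wilkinson}---is the same as the paper's. The gap is exactly where you flag it: your proposed mechanism for producing a compact center leaf in $W^s_\delta(L_0)\cap W^u_\delta(L_0)\setminus\{L_0\}$ does not work.

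Iterating $L_0'$ forward keeps $g^n(L_0')$ inside the \emph{same} center-stable leaf $W^{cs}(L_0)$, so $W^s_\e(g^n(L_0'))\subset W^{cs}(L_0)$. Therefore
\[
W^u_\e(L_0)\cap W^s_\e\big(g^n(L_0')\big)\ \subset\ W^u_\e(L_0)\cap W^{cs}(L_0),
\]
and if $W^u_\e(L_0)\cap W^{cs}(L_0)=L_0$ (which is precisely the situation you must rule out), then Lemma~\ref{l.intersects into compact center leaf} only returns $\{L_0\}$. The distinctness of $g^n(L_0')$ from $L_0$ is irrelevant here: it gives you a second compact leaf in the stable manifold, not a compact leaf in the intersection of the stable and unstable manifolds. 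In other words, Lemma~\ref{l.intersects into compact center leaf} is a local product-structure statement; it cannot by itself create a homoclinic intersection that was not already there at scale $\e$.

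The paper closes this gap by a genuinely different step that uses both transitivity and Poincar\'e--Bendixson. The two compact leaves $\gamma,\gamma'$ bound a compact annular region $C$ in $W^s(\gamma)$; Poincar\'e--Bendixson applied to the center foliation on $C$ (and on its iterates $f^{-n}(C)$) forces every non-compact center leaf there to accumulate on compact center leaves. Transitivity then supplies a point $x_0\in C$ whose backward iterate $f^{-n}(x_0)$ lands in $W^u_{\mathrm{loc}}(\gamma)$; the center leaf through $f^{-n}(x_0)$ lies in $f^{-n}(C)\subset W^s(\gamma)$ and simultaneously in $W^{cu}(\gamma)$, and its closure contains a compact center leaf, which therefore sits in $W^s(\gamma)\cap W^u(\gamma)\setminus\gamma$. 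So transitivity is used not only to keep $g=f^k$ transitive (your use of Proposition~\ref{p.lift}) but, crucially, to produce the homoclinic intersection itself.
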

\proof
Let $\gamma$ and $\gamma^\prime$ be two compact center leaves of $f$ which are in the same center stable leaf. 
By Proposition~\ref{p.existence-of-peiodic-compact-center-leaf}, without loss of generality, one can assume that $\gamma$ is a periodic center leaf.  Thanks to Proposition~\ref{p.lift},  one can  assume that $f(\gamma)=\gamma$ for simplicity.

The compact leaves $\gamma^\prime$ and $\gamma$ bounds a region $C$ in $W^s(\gamma)$ which is an annulus or a M\"obius band. By Poincar\'e-Bendixson theorem, for each point $x\in C$, either $\cF^c(x)$ is compact or $\overline{\cF^c(x)}$ consists of $\cF^c(x)$ and two   compact center leaves in $C$. Since $f$ is transitive, there exist a point $x_0\in C$ and $n\in\NN$ such that   $f^{-n}(x_0)$ is in $W_{loc}^u(\gamma)$. One again,  by Poincar\'e-Bendixson, there exists a compact center leaf in $\overline{\cF^c(f^{-n}(x_0))}\cap W^u_{loc}(\gamma)$.  Since $\overline{\cF^c(f^{-n}(x_0))}\subset f^{-n}(C)\subset W^s(\gamma)$,  
 the intersection of  stable manifold and unstable manifolds of  $\gamma$    contains an entire compact center leaf. By the first item in Theorem~\ref{thm.bonatti-wilkinson},  modulo finite lifts and iterates, $f$ is $C^0$-conjugate to a  skew-product. 
\endproof
As a corollary of Propositions~\ref{p.existence-of-peiodic-compact-center-leaf} and~\ref{p.all-compact}, one has the following consequence.
\begin{corollary}
	Let $f$ be a   partially hyperbolic diffeomorphism   on a closed $3$-manifold.
	Assume that 
	\begin{itemize}
		\item $f$ is transitive and has $1$-dimensional topologically neutral center;
		\item $f$ simultaneously  has   compact and non-compact  center leaves, 
	\end{itemize} 
Then all the compact center leaves are periodic under $f$.
\end{corollary}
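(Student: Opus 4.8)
The plan is to argue by contradiction: I suppose that $f$ has a compact center leaf $\gamma$ which is \emph{not} periodic, and I will contradict the hypothesis that $f$ also possesses a non-compact center leaf.

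First I would feed $\gamma$ into Proposition~\ref{p.existence-of-peiodic-compact-center-leaf}. Since $\gamma$ is a compact non-periodic center leaf, that proposition produces a compact \emph{periodic} center leaf $L_0$ whose center stable manifold $W^s(L_0)=\bigcup_{y\in L_0}\cF^{ss}(y)$ contains another compact center leaf $L_1\neq L_0$. As $W^s(L_0)$ is contained in the center stable leaf $\cF^{cs}(L_0)$, we thus obtain two distinct compact center leaves lying in one center stable leaf.

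Next I would invoke Proposition~\ref{p.all-compact}: since $f$ is transitive on a closed $3$-manifold, has $1$-dimensional topologically neutral center, and has two distinct compact center leaves in a common center stable leaf, there exist a finite cover $\pi\colon\widehat M\to M$, a lift $\hat f$ of $f$ and an integer $k\geq 1$ such that $\hat f^{\,k}$ is $C^0$-conjugate to a skew-product over a linear Anosov diffeomorphism of $\TT^2$ with rotations of the circle. Now I derive the contradiction. The center foliation of $\hat f^{\,k}$ coincides with that of $\hat f$, and the $\pi$-preimage of the non-compact center leaf of $f$ — a line, hence simply connected — is a disjoint union of leaves of the lifted center foliation, each homeomorphic to $\RR$; hence $\hat f^{\,k}$ still has a non-compact center leaf. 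On the other hand the center foliation of a skew-product is its circle fibration, every leaf of which is compact, and the conjugacy supplied here (as in~\cite{BW}) is a leaf conjugacy, carrying center leaves of $\hat f^{\,k}$ to center fibers of the skew-product. Therefore every center leaf of $\hat f^{\,k}$ would be compact, a contradiction. This shows that every compact center leaf of $f$ is periodic.

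The hard part is this last step: one has to be sure that ``possessing a non-compact center leaf'' is stable both under iteration (immediate, since $f^{k}$ is partially hyperbolic with the same bundles, is still topologically neutral, and so by Theorem~\ref{thm.dynamical-coherence} has the same center foliation as $f$) and under finite covering (easy once one remarks that a simply connected leaf lifts to a disjoint union of homeomorphic copies of itself), and one must use that the Bonatti--Wilkinson conjugacy with a skew-product respects the center foliations. Everything else is a direct concatenation of Propositions~\ref{p.existence-of-peiodic-compact-center-leaf} and~\ref{p.all-compact}.
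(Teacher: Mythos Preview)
Your argument is correct and is exactly the proof the paper intends: it states the corollary as an immediate consequence of Propositions~\ref{p.existence-of-peiodic-compact-center-leaf} and~\ref{p.all-compact} without spelling out the details, and you have supplied precisely those details (contradiction via a non-periodic compact leaf, then the skew-product conclusion forcing all center leaves to be circles, against the assumed non-compact leaf). Your care about the lift and the fact that the Bonatti--Wilkinson conjugacy carries center leaves to circle fibers is appropriate; in fact in~\cite{BW} the skew-product case is established by first proving that every center leaf is a circle, so the compactness of all center leaves of $\hat f^{\,k}$ is already built into that conclusion.
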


\section{Classification of transitive partially hyperbolic diffeomorphisms with neutral center: Proof of Theorem~\ref{thm.main}}
In this section, we first recall the notion of $N$-th intersection of a hyperbolic saddle   for surface diffeomorphisms (introduced in~\cite{BL}) and some properties of $N$-th intersection sets. 
Then we extend this notion to partially hyperbolic setting for a compact periodic center leaf provided that the system is transitive and has topologically neutral center. 
At last, we give the proof of Theorem~\ref{thm.main}.
\subsection{$N$-th intersection of a hyperbolic saddle}
Now, we   introduce the notion of $N$-th intersection for a hyperbolic saddle of a surface diffeomorphism.

Let $f$ be a $C^1$-diffeomorphism on a surface $S$ and $p$ be a hyperbolic saddle. Assume that  the stable and unstable manifolds of $p$ are homeomorphic to $\mathbb{R}$, then for each $x\in W^s(p)$, we denote by $I^s_x$ the compact segment in $W^s(p)$ bounded by $p$ and $x$. Analogously, one   defines $I_x^u$ for $x\in W^u(p)$.

\begin{definition}
	Let $f$ be a $C^1$-diffeomorphism on a surface $S$ and  $p$ be a hyperbolic saddle. Assume that there is no homoclinic tangency between  the stable and unstable manifolds of $p$. A point $x\in W^s(p)\cap W^u(p)\setminus\{p\}$ is called the \emph{$N$-th intersection of the invariant manifolds of $p$}, if 
	$$\#\big(I_x^s\cap I_x^u\setminus\{p\}\big)=N.$$
We define a function $x\in W^s(p)\cap W^u(p)\setminus\{p\}\mapsto n(x)\in\mathbb{N}$ provided that $x$ is the $n(x)$-th intersection of the invariant manifolds of  $p$. See Figure 1.
\end{definition}
\begin{Remark}
	\begin{itemize}
		\item  Notice that the invariant manifolds of $p$ under $f$ coincide  with the invariant manifolds of $f^k$ for any $k\in \NN^+$, thus the $N$-th intersections of invariant manifolds under $f$ coincide with the ones under  $f^k$ for any $k\in\NN^+$;
		\item  For any $x\in W^s(p)\cap W^u(p)\setminus\{p\}$, one has $f((p,x))^s=(p,f(x))^s$ and $f((p,x)^u)=(p,f(x))^u$, which implies that $n(x)=n(f(x))$.
	\end{itemize}

\end{Remark}

  \begin{figure}[h]
	\begin{center}
		\def\svgwidth{0.5\columnwidth}
		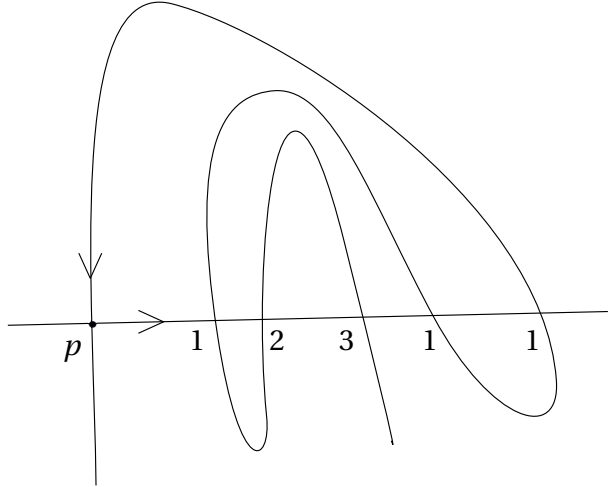
		\caption{Intersection for hyperbolic fixed points}
	\end{center}
\end{figure}

 In the following, we will show that for each $N\in\NN$ there are finitely many homoclinic orbits which are $j$-th intersection for $j\leq N.$
 \begin{proposition}~\label{p.finite-intersection}
 	Let $p$ be a hyperbolic saddle of a surface diffeomorphism $f$, and assume that $p$ has no homoclinic tangencies. For any $N\in\NN$, one has  
 	$$\#\big\{\Orb(x): x\in W^s(p)\cap W^u(p)\setminus\{p\}\textrm{ and }n(x)\leq N \big\}< +\infty.$$
 \end{proposition}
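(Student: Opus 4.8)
The plan is to prove a sharper, quantitative fact: that $\{x : n(x)\le N\}$ meets a fixed compact fundamental domain of $W^s(p)$ in only finitely many points; since $n(\cdot)$ is constant along orbits, this gives the statement. After replacing $f$ by $f^2$ (which by the remarks above alters neither the invariant manifolds of $p$ nor the function $n(\cdot)$, and for which finiteness of $f^2$-orbits yields finiteness of $f$-orbits), I may assume $f$ fixes each of the two branches of $W^s(p)$ and of $W^u(p)$, acting on each as an orientation-preserving contraction (resp. expansion) toward (resp. away from) $p$. Every orbit of a point with $n\le N$ then meets a fixed compact fundamental domain $\overline{D^s}$ of $W^s(p)$ (a union of at most two compact arcs, one per branch); so it suffices to prove $\{x:n(x)\le N\}\cap\overline{D^s}$ is finite. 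I would also fix a small local stable manifold $W^s_{loc}(p)$ so that $\overline{D^s}$ lies \emph{beyond} it, meaning that for every $x\in\overline{D^s}$ on a branch $b$ one has $W^s_{loc}(p)\cap b\subset I^s_x$; this inclusion is what will allow witnesses to be detected as intersection points on $I^s_x$.

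Suppose there were infinitely many distinct $x_k\in\overline{D^s}$ with $n(x_k)\le N$. Extracting, $x_k\to x_\infty\in\overline{D^s}\subset W^s(p)\setminus\{p\}$; by compactness of $\overline{D^s}$ the arc-length positions of the $x_k$ along a fixed branch $b$ of $W^s(p)$ also converge, necessarily to that of $x_\infty$, and after a further extraction all $x_k$ lie on a fixed branch $b^u$ of $W^u(p)$. Let $t_k$ be the arc-length position of $x_k$ along $b^u$. The key dichotomy is whether the $t_k$ are bounded. If they are, a subsequence of the $x_k$ lies on a single compact embedded sub-arc of $W^u(p)$ and converges to $x_\infty\in W^u(p)$; since there is no homoclinic tangency, $W^s(p)$ meets this arc transversally at each $x_k$, and transverse intersection points of two $C^1$ arcs clustering at a point lying on both force tangency there (apply Rolle's theorem to the "vertical coordinate" of one arc over the other), i.e.\ a homoclinic tangency — a contradiction. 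Hence, after extraction, $t_k\to\infty$.

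Now I push forward by a large power $f^{n_0}$. The points $\xi_k:=f^{n_0}(x_k)\in W^u(p)\cap W^s(p)$ still lie on branch $b^u$ of $W^u(p)$ and on branch $b$ of $W^s(p)$; choosing $n_0$ large (so $f^{n_0}(x_\infty)$ lies well inside $W^s_{loc}(p)\cap b$) and then $k$ large, one gets $\xi_k\in W^s_{loc}(p)\cap b$, the $\xi_k$ are pairwise distinct, and their arc-positions $t'_k$ along $b^u$ tend to $\infty$ (because $f^{n_0}$ restricts to an increasing self-homeomorphism of $b^u\cong[0,\infty)$ fixing $p$). Fix $N+1$ of them, $\xi_{k_1},\dots,\xi_{k_{N+1}}$, and let $T$ be the largest of their $W^u$-arc-positions, so all of them lie in the initial sub-arc $\Gamma_T\subset b^u$ of arc-length $T$. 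Finally pick $k^{\ast}$ with $t_{k^{\ast}}>T$: then $I^u_{x_{k^{\ast}}}\supset\Gamma_T\ni\xi_{k_1},\dots,\xi_{k_{N+1}}$, while also $\xi_{k_1},\dots,\xi_{k_{N+1}}\in W^s_{loc}(p)\cap b\subset I^s_{x_{k^{\ast}}}$. Thus $I^s_{x_{k^{\ast}}}\cap I^u_{x_{k^{\ast}}}$ contains $N+1$ points different from $p$, i.e.\ $n(x_{k^{\ast}})\ge N+1$, contradicting $n(x_{k^{\ast}})\le N$. This proves $\{x:n(x)\le N\}\cap\overline{D^s}$ is finite, hence finitely many orbits.

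The main obstacle is the step producing $t_k\to\infty$, i.e.\ converting "infinitely many homoclinic points accumulate (inside a compact fundamental domain of $W^s$)" into "$W^u(p)$ meets a fixed piece of $W^s(p)$ in intersection points of unbounded $W^u$-arc-length"; this is exactly where the hypotheses "no homoclinic tangency" and compactness of the surface enter, via the elementary $C^1$ clustering fact used to exclude the bounded case. A secondary, purely bookkeeping difficulty is arranging that the $N+1$ witness intersection points genuinely lie on $I^s_{x_{k^{\ast}}}$; this is handled by placing the fundamental domain outside $W^s_{loc}(p)$ and pushing the witnesses into $W^s_{loc}(p)$ before counting, so that the required inclusions $\Gamma_T\subset I^u_{x_{k^{\ast}}}$ and $W^s_{loc}(p)\cap b\subset I^s_{x_{k^{\ast}}}$ both hold.
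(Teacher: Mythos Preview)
Your proof is correct, but it takes a genuinely different route from the paper's. The paper's argument is direct rather than by contradiction: after reducing to a pair of separatrices $J^{s,+},J^{u,+}$, it picks a homoclinic point $x$ realizing the maximum of $n(\cdot)$ among values $\le N$, and shows that any $z$ with $n(z)\le N$, once slid into the stable fundamental domain $I^s_x\setminus I^s_{f(x)}$, must also lie in the \emph{compact} unstable segment $I^u_{f(x)}$ (otherwise $I^s_{f(x)}\cap I^u_{f(x)}\subset I^s_z\cap I^u_z$ together with $z\notin I^s_{f(x)}$ would force $n(z)\ge n(x)+1$). Transversality then makes $(\overline{I^s_x\setminus I^s_{f(x)}})\cap I^u_{f(x)}$ finite. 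Your argument instead extracts a convergent subsequence in a stable fundamental domain, uses the Rolle/tangency dichotomy to force the unstable arc-positions $t_k\to\infty$, and then manufactures $N+1$ witnesses inside $I^s_{x_{k^\ast}}\cap I^u_{x_{k^\ast}}$ by pushing points into $W^s_{loc}(p)$. The paper's ``maximal $n$'' trick is slicker and immediately localizes all low-$n$ orbits to a concrete box; your compactness argument is longer but more generic, and its dichotomy (bounded vs.\ unbounded $t_k$) cleanly isolates exactly where the no-tangency hypothesis is consumed.
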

\proof
Denote by $J^{s,+}$ and $J^{s,-}$ the two separatrices of the stable manifold of $p$ and by $J^{u,+}$ and $J^{u,-}$ the two separatrices of the unstable manifold of $p$. 
Up to replacing $f$ by $f^2$, we may assume that $f$ preserves  these separatrices.
One only needs to prove the proposition for the intersections between   $J^{s,+}$ and $J^{u,+}$, and the rest case would follow analogously.

Let $x\in J^{s,+}(p)\cap J^{u,+}(p)$ be a homoclinic intersection of $p$ such that 
$$n(x)=\sup\big\{n(y): y\in J^{s,+} \cap J^{u,+}\setminus\{p\}\textrm{ and }n(y)\leq N\big\} .$$
Since for any  $z\in  J^{s,+}\cap J^{u,+}\setminus\{p\}$  the number $n(z)$ is invariant under $f$, without loss of generality, one can assume that $z\in I_x^s\setminus I^s_{f(x)}$. If $z\notin I_{f(x)}^u$, then $f(x)\in I_z^s\cap I_z^u$ which implies that  $n(z)\geq n(x)+1$. Therefore, for each homoclinic point $z\in  J^{s,+}\cap J^{u,+}\setminus\{p\}$ with $n(z)\leq n(x)$, up to finite iterates, one has 
$z\in \big(I_x^s\setminus I^s_{f(x)}\big)\cap I_{f(x)}^u$. Since there are no homoclinic tangencies for $p$, one has 
$$\#\big( \overline{I_x^s\setminus I^s_{f(x)}}\cap I_{f(x)}^u\big)<\infty,$$
ending the proof of Proposition~\ref{p.finite-intersection}.
\endproof

\subsection{$N$-th intersection for a periodic compact center leaf}

The idea is to `modulo the center foliation', and we `come to' the surface case and we define the $N$-th intersection for a periodic compact center leaf.  The difficulty comes from checking that the notion is well defined along the center leaves and is overcame by the center flows given by Theorem~\ref{thm.main-flow}.

Before defining the intersection number for a compact periodic center leaf, we need some preparations.
\begin{lemma}~\label{l.unique-intersection-for-periodic-center-leaf}
Let  $f$ be a partially hyperbolic diffeomorphism on a closed 3-manifold $M$. Assume that $f$ has  one dimensional topologically neutral center and  has a periodic compact center leaf $\gamma$.
Then one has 
\begin{itemize}
		\item  $\cF^{ss}(x)\cap\gamma=\cF^{uu}(x)\cap\gamma=\{x\},$ for each $x\in\gamma$;
	\item if $f$ is transitive, then  the intersection of $W^s(\gamma)\cap W^u(\gamma)$ is dense in $W^s(\gamma)$ and $W^u(\gamma)$.
\end{itemize}
	
	\end{lemma}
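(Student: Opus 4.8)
The plan is to handle the two items in order. For the first item, since $\gamma$ is a periodic compact center leaf, replacing $f$ by a suitable power we may assume $f(\gamma)=\gamma$. The compactness of $\gamma$ and the topologically neutral property will force the restriction $f|_\gamma$ to be, up to a further power, something conjugate to a rotation on $S^1$ (the length function on $\gamma$ is $f$-invariant and bounded, so iterating gives limit center maps, and by Lemma~\ref{l.limit-maps}(4) any such limit map with a fixed point is an involution or the identity; this rigidifies $f|_\gamma$). Then suppose some strong stable leaf $\cF^{ss}(x)$ meets $\gamma$ in two distinct points $x$ and $z$. Applying $f^n$ with $n\to+\infty$, the center distance along $\gamma$ between $f^n(x)$ and $f^n(z)$ cannot shrink to zero (topological neutrality gives a uniform lower bound for the images of a fixed-length center path), yet the strong stable contraction forces $\ud(f^n(x),f^n(z))\to 0$ in $M$; since $\gamma$ is compact and embedded, the center distance along $\gamma$ is comparable to the ambient distance on a neighborhood of $\gamma$, giving a contradiction. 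The same argument with $f^{-n}$ handles $\cF^{uu}(x)\cap\gamma$. Alternatively one can invoke Corollary~\ref{c.strong-intersection} directly, since $\gamma$ being periodic means every point of $\gamma$ is recurrent, hence lies in its own $\omega$-limit set, so $L=\gamma$ contains $x,y$ with $y\in\omega(x)$; then Corollary~\ref{c.strong-intersection} immediately gives that each strong stable leaf cuts $\gamma$ in at most one point, and the strong unstable case follows by applying the corollary to $f^{-1}$.

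For the second item, assume $f$ is transitive. I would use that $\gamma$ is a hyperbolic-like compact normally hyperbolic submanifold: $W^s(\gamma)$ and $W^u(\gamma)$ are the center-stable and center-unstable leaves through $\gamma$ (using completeness, Proposition~\ref{p.complete}), saturated by strong stable (resp.\ strong unstable) leaves through $\gamma$. Transitivity gives a point with dense orbit; its positive iterates accumulate on all of $M$, in particular on $\gamma$, which by $\lambda$-lemma type arguments forces pieces of unstable manifolds of points near $\gamma$ to accumulate on $W^u(\gamma)$ in the $C^1$ sense, and similarly for stable manifolds. More directly: pick an open set $U$ meeting $W^s(\gamma)$ transversally; by transitivity there is $m$ with $f^m(U)$ meeting any prescribed open subset of $W^u(\gamma)$. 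Combining forward density (points of $W^s(\gamma)$ whose forward orbit is dense, hence returns near $\gamma$ and then spreads along $W^u(\gamma)$ under backward iteration) with the fact that $W^u(\gamma)$ is itself dense in $M$ (its closure is a nonempty $f$-invariant closed set containing the transitive point's limit behavior), one shows the transverse intersections $W^s(\gamma)\pitchfork W^u(\gamma)$ are dense in each. The cleanest route is: the closure of $W^u(\gamma)$ is closed and $f$-invariant, and contains an unstable leaf, hence by transitivity equals $M$; therefore $W^u(\gamma)$ meets the open set $W^s_{loc}(\gamma)\setminus\gamma$, and iterating such an intersection point backward (it stays in $W^s(\gamma)$ and its backward orbit in $W^u(\gamma)$ spreads out) plus forward density gives density of the intersection in both $W^s(\gamma)$ and $W^u(\gamma)$.

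The main obstacle I anticipate is the second item: making precise the ``spreading along the unstable manifold'' so that the intersection is dense in \emph{all} of $W^u(\gamma)$ and $W^s(\gamma)$, not merely nonempty or dense in a piece. This requires carefully combining the density of a transitive orbit with the $\lambda$-lemma (inclination lemma) for the normally hyperbolic compact invariant $\gamma$, and using that $W^s(\gamma), W^u(\gamma)$ are entire leaves of the complete center-stable and center-unstable foliations so that once a transverse intersection exists, its orbit fills up both leaves densely. The first item, by contrast, is essentially immediate from Corollary~\ref{c.strong-intersection} applied to $\gamma$ and to $f^{-1}$.
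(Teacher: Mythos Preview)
Your approach to the first item is correct, and the alternative via Corollary~\ref{c.strong-intersection} is a clean shortcut different from the paper's argument. The paper argues more directly: if $x\neq y$ lie in $\gamma$ with $x\in\cF^{ss}(y)$, then iterating forward produces points $z_\e\in\gamma$ with $\cF^{ss}_\e(z_\e)$ meeting $\gamma$ twice for arbitrarily small $\e$, contradicting the transversality of $E^s$ and $E^c$ inside $E^{cs}$. This uses only the compactness of $\gamma$ and partial hyperbolicity, not the neutral property. Your route through Corollary~\ref{c.strong-intersection} is legitimate (compactness of $\gamma$ ensures $\omega(x)\cap\gamma\neq\emptyset$ for $x\in\gamma$, so the hypothesis is met), but it imports the machinery of limit center maps where a one-line transversality argument suffices.

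For the second item there is a genuine gap. Your ``cleanest route'' asserts that $\overline{W^u(\gamma)}$ is $f$-invariant, but this holds only when $f(\gamma)=\gamma$. You reduced to that case in item~1 by replacing $f$ with a power $f^k$, yet for item~2 you then need $f^k$ to remain transitive, and this is \emph{not automatic}: powers of transitive maps need not be transitive. Without it, the standard argument only shows that $\bigcup_{i=0}^{k-1} W^u(f^i(\gamma))$ accumulates everywhere, which does not give density of $W^u(\gamma)$ itself near a given point of $W^s(\gamma)$. The paper closes this gap by invoking Proposition~\ref{p.lift} (which relies on the topologically neutral hypothesis via Proposition~\ref{p.omega}) to ensure $f^k$ is transitive; after that, as the paper says, ``one concludes by transitivity'' via exactly the kind of local product/inclination argument you sketch. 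So your outline for item~2 becomes correct once you insert the reference to Proposition~\ref{p.lift} to justify passing to $f^k$.
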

\proof
Assume, on the contrary, that there exist $x,y\in \gamma$ with  $x\in\cF^{ss}(y)$, then by iterating  $x$ and $y$ forwardly,  one gets that for any $\e>0$, there exists a point $z_\e\in\gamma$ such that $\cF^{ss}_{\e}(z_\e)$ intersects $\gamma$ into at least two points, which contradicts to the transversality  in $E^{cs}$ between $E^s$ and $E^c$.

Let $k$ be the period of the center leaf $\gamma$ under $f$. By Proposition~\ref{p.lift}, $f^k$ is still transitive and $\gamma$ is $f^k$-invariant, then one concludes by transitivity.
\endproof 
Let us fix some notations before defining the $N$-th intersection.
Let $f$ be a partially hyperbolic diffeomorphism  a closed $3$-manifold 
  with the following properties:
\begin{itemize}
	\item $f$   has $1$-dimensional topologically neutral center.	 
	\item $f$ admits  a periodic compact center leaf $\gamma$.
	\item  the bundles $E^s,E^c,E^u$ are   orientable.
\end{itemize}
 For $x\in W^u(\gamma)\cap W^s(\gamma)\setminus\{\gamma \}$, let $x^s=\cF^{ss}(x)\cap\gamma$ and $x^u=\cF^{uu}(x)\cap\gamma$ (which is unique due to Lemma~\ref{l.unique-intersection-for-periodic-center-leaf}). We denote by $I_{x,\gamma}^{ss}$  the compact  strong stable   segments bounded by $x$ and $x^s$. Analogously, one can define  $I_{x,\gamma}^{uu}$ associated to $\gamma,x,x^u.$
When there is no confusion, we will drop the index $\gamma$ for simplicity.
By the completeness of the invariant foliations, the center leaf $\cF^c(x)$ intersects $I_x^{ss}$ and $I_x^{uu}$ into infinitely many points respectively. Let $x^s_1\in\cF^c(x)\cap I_x^{ss}$ be a point such that  the open strong stable segment  $(x^s_1,x)^{ss}$ is disjoint from $\cF^c(x)$, and let $x^u_1\in\cF^c(x)\cap I_x^{uu}$ be the point analogously defined for the strong unstable.  Then the center segment $(x^s_1,x)^c$, the strong stable segment $I^{ss}_x$  and $\gamma$ bound  a compact center stable   submanifold   and we denote it  as $I^{cs}(x)$; likewise, one   gets a compact center unstable submanifold denoted as  $I^{cu}(x)$. See Figure 2.

 \begin{figure}[h]
	\begin{center}
		\def\svgwidth{0.5\columnwidth}
		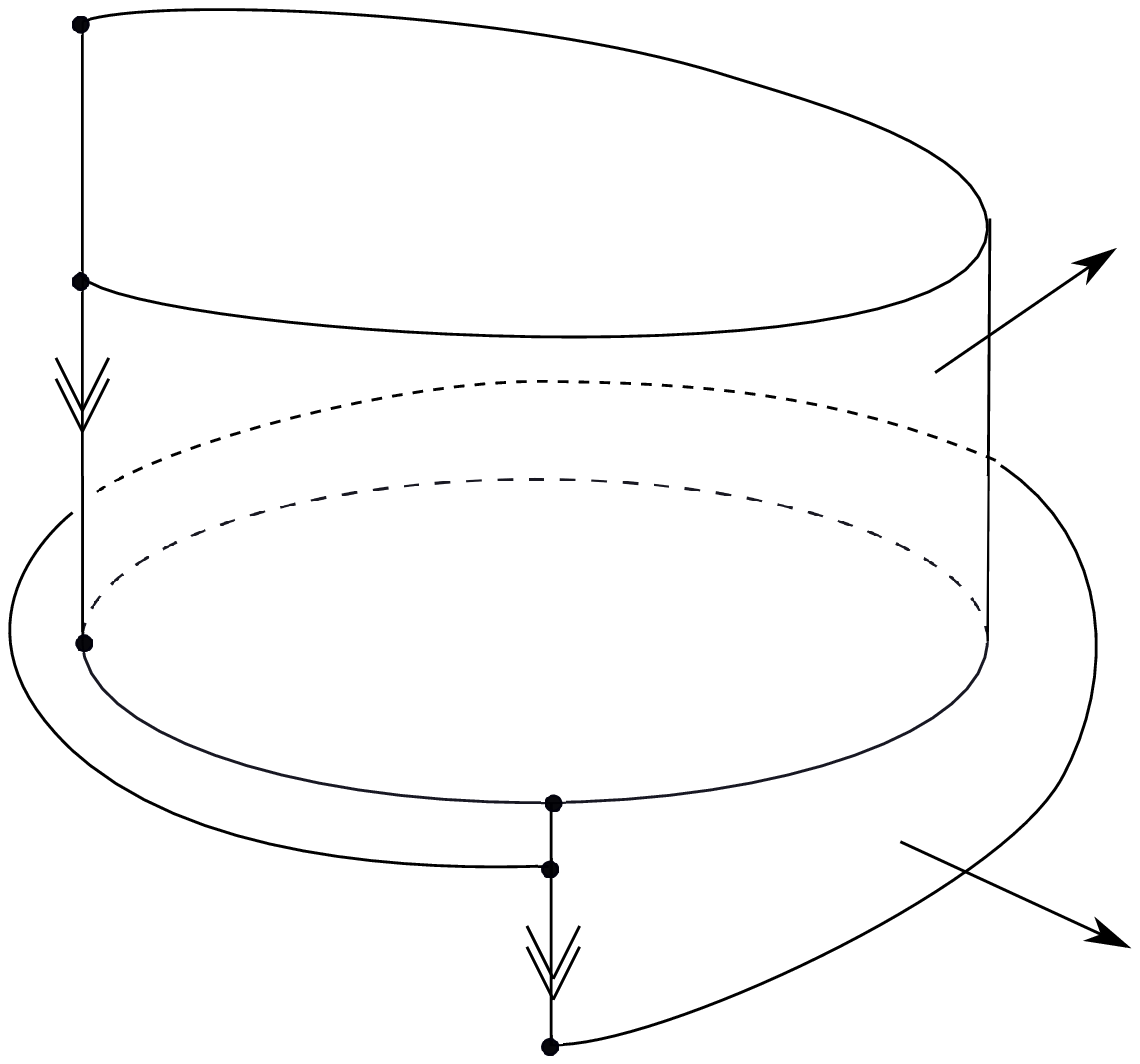
		\caption{$N$-th intersection for compact periodic center leaf}
	\end{center}
\end{figure}
To guarantee that the notion is well defined, we need to put more restrictions on the diffeomorphism than the case for a surface diffeomorphism. 
\begin{definition}
Consider a  partially hyperbolic diffeomorphism $f$ on a closed $3$-manifold with the following properties:
	\begin{itemize}
		\item $f$ is transitive and has $1$-dimensional topologically neutral center.	 
		\item $f$ admits  a periodic compact center leaf $\gamma$.
		\item  the bundles $E^s,E^c,E^u$ are   orientable.
	\end{itemize}
	We say that  $x\in W^s(\gamma)\cap W^u(\gamma)\setminus\{\gamma\}$ is \emph{the $N$-th intersection} of $\gamma$ if 
	$I^{cu}(x)\cap I^{cs}(x)\setminus \{\gamma\}$ has exactly $N$ connected components.
Then each $x\in W^{s}(\gamma)\cap W^u(\gamma)\backslash\gamma$ is   associated to  a number  $n(x)\in\NN$  if $x$ is the $n(x)$-th intersection.
\end{definition}
\begin{Remark}~\label{r.center-intersection-number}
 The invariant foliations of $f$ coincide with the corresponding  invariant foliations of $f^k$ for any $k\in\NN^+$, hence the $N$-th intersections   under $f$ coincide with the ones of $f^k$ for $k\in\NN^+$.  
\end{Remark}

\begin{lemma}~\label{l.invariance-number}
	The intersection number is well define, that is, 
	for each $x\in W^{s}(\gamma)\cap W^u(\gamma)\backslash\gamma$, one has 
	\begin{enumerate}
		\item $n(x)=n(f(x))$;
		\item $n(x)=n(y)$, for any $y\in\cF^c(x)$.
	\end{enumerate}
\end{lemma}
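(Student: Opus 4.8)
The plan is to isolate the following principle and then apply it twice. \emph{Any homeomorphism $\Phi$ of $M$ that fixes $\gamma$ and preserves each of the three foliations $\cF^{ss}$, $\cF^{uu}$, $\cF^c$ satisfies $n(\Phi(x))=n(x)$ for every $x\in W^s(\gamma)\cap W^u(\gamma)\setminus\{\gamma\}$.} Indeed, writing $x'=\Phi(x)$, such a $\Phi$ carries $x^s=\cF^{ss}(x)\cap\gamma$ to $\cF^{ss}(x')\cap\gamma$, the segment $I_x^{ss}$ to $I_{x'}^{ss}$, and the point $x^s_1$ (the point of $\cF^c(x)\cap I_x^{ss}$ nearest to $x$) to the analogous point attached to $x'$, since $\Phi|_{I_x^{ss}}$ is a homeomorphism of arcs sending the relevant endpoints to the relevant endpoints and $\Phi$ preserves $\cF^c$; likewise for the strong unstable data. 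Moreover $\Phi$ preserves $\cF^{cs}$ (a center stable leaf being the union of the strong stable leaves through a center leaf, by completeness, Proposition~\ref{p.complete}), hence it maps $\cF^{cs}(\gamma)$ to itself and carries the region $I^{cs}(x)$---which inside $\cF^{cs}(\gamma)$ is the piece cut off by the arcs $(x^s_1,x)^c$, $I_x^{ss}$ and $\gamma$---onto $I^{cs}(x')$, and similarly $\Phi(I^{cu}(x))=I^{cu}(x')$. Thus $\Phi$ restricts to a homeomorphism between $I^{cs}(x)\cap I^{cu}(x)\setminus\gamma$ and $I^{cs}(x')\cap I^{cu}(x')\setminus\gamma$, and the number of connected components is a topological invariant.

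For (1) I would take $\Phi=f$. The equality $n(x)=n(f(x))$ presupposes $f(x)\in W^s(\gamma)\cap W^u(\gamma)$, hence $f(\gamma)=\gamma$ (when $\gamma$ has period $k>1$ one argues with $f^k$, which has the same invariant foliations and hence the same function $n$, by Remark~\ref{r.center-intersection-number}); and $f$ manifestly fixes $\gamma$ and preserves the strong and center foliations. So (1) reduces to the elementary naturality checks---that $f$ sends $x^s,I_x^{ss},x^s_1,I^{cs}(x)$ to the corresponding objects attached to $f(x)$, and likewise for the unstable data---all immediate from $f(\gamma)=\gamma$ and the $f$-invariance of the foliations.

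For (2) the map $\Phi$ will be a time map of the center flow. Since $E^c$ is orientable, up to replacing $f$ by $f^2$---which changes neither $n$ (Remark~\ref{r.center-intersection-number}) nor the invariant foliations---we may assume $f$ preserves the orientation of $E^c$, so Theorem~\ref{thm.main-flow} provides a continuous flow $\varphi_t$ with $\{\varphi_t(z)\}_{t\in\RR}=\cF^c(z)$ for all $z$. \textbf{The crux, and the step I expect to be the main obstacle, is that $\varphi_t$ preserves the strong stable and strong unstable foliations.} Granting this, $\varphi_\tau$ preserves $\cF^c$ (trivially), $\cF^{cs}$, $\cF^{cu}$, and fixes $\gamma$ (a center leaf), so it is a homeomorphism of the type in the first paragraph; choosing $\tau$ with $\varphi_\tau(x)=y$ for a given $y\in\cF^c(x)$ then gives $n(y)=n(x)$. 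To establish the crux I would use Theorem~\ref{thm.main-metric}: the invariant center metric is preserved by the strong stable holonomies, so if $q\in\cF^{ss}(p)$ and $H$ denotes the strong stable holonomy from $\cF^c(p)$ to $\cF^c(q)$ (with $H(p)=q$), then $H$ is an isometry for the center metric; since the local strong stable holonomies are isotopic to the identity and $E^c$ carries a continuous orientation, $H$ preserves the center orientation, hence commutes with $\varphi_t$, so that $\varphi_t(q)=\varphi_t(H(p))=H(\varphi_t(p))\in\cF^{ss}(\varphi_t(p))$; connectedness of the leaves of $\cF^{ss}$ then upgrades this to $\varphi_t(\cF^{ss}(p))=\cF^{ss}(\varphi_t(p))$, and symmetrically for $\cF^{uu}$. (As an alternative to invoking the flow globally, one could show $n$ is locally constant along $\cF^c(x)$: the annuli $I^{cs}(y),I^{cu}(y)$ vary continuously with $y\in\cF^c(x)$, and no component of their intersection can appear or disappear without a center arc crossing the strong stable or strong unstable part of a boundary---exactly what the flow-invariance of $\cF^{ss},\cF^{uu}$ forbids---and connectedness of $\cF^c(x)$ finishes (2).)
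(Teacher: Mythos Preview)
Your proposal is correct and follows essentially the same approach as the paper: for (1) you use the $f$-invariance of the foliations to get $f(I^{cs}(x))=I^{cs}(f(x))$ and $f(I^{cu}(x))=I^{cu}(f(x))$, and for (2) you use the center flow of Theorem~\ref{thm.main-flow} (after passing to $f^2$ so the center orientation is preserved) together with the fact that $\varphi_t$ preserves the strong foliations, exactly as the paper does. Your justification of this last point via the holonomy-invariance of the center metric (Theorem~\ref{thm.main-metric}) is a bit more explicit than the paper's, which simply asserts that the center flow ``commutes with the strong stable and unstable holonomies'' as an immediate consequence of its construction.
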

\proof
By    the invariance of the foliations, $I^{cu}(f(x))=f(I^{cu}(x))$ and $I^{cs}(f(x))=f(I^{cs}(x))$ which implies 
$n(x)=n(f(x))$.

By Remark~\ref{r.center-intersection-number} and Proposition~\ref{p.lift}, up to replacing $f$ by $f^2$, one can assume that $f$ preserves the orientation of $E^c$. 
By Theorem ~\ref{thm.main-flow}, there exists a center flow $\{\varphi_t
\}_{t\in\RR}$ commuting with the strong stable and unstable holonomies, therefore the center flow preserves the strong stable and unstable foliations. For any point $y\in\cF^c(x)$, there exists $t_0\in\RR$ such that $\varphi_{t_0}(x)=y.$ By definition and the fact that the center flow preserves the strong stable and unstable foliations, $\varphi_{t_0}(I^{cs}(x))=I^{cs}(y)$ and $\varphi_{t_0}(I^{cu}(x))=I^{cu}(y)$,  which implies 
 $n(x)=n(y)$ since $\varphi_{t_0}$ is a homeomorphism.
\endproof 

\begin{proposition}~\label{p.finite-center-leaf}
	Let $f$ be a   partially hyperbolic diffeomorphism  on a closed $3$-manifold with the following properties:
	\begin{itemize}
		\item $f$ is transitive and has $1$-dimensional topologically neutral center.	 
		\item $f$ admits  a periodic compact center leaf $\gamma$ and a non-compact center leaf.
		\item  the bundles $E^s,E^c,E^u$ are   orientable.
	\end{itemize}
	
Then	for any integer $N\in\NN$, there are finitely many center leaves where $n(\cdot)$ is bounded by $N$; in formula 
$$\#	\big\{\cF^c(x): \cF^c(x)\subset W^s(\gamma)\cap W^u(\gamma)\setminus\{\gamma\} \textrm{ and  } n(x)\leq N\big\}<\infty.$$
	\end{proposition}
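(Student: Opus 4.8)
The plan is to reduce this to the surface result of Proposition~\ref{p.finite-intersection} by passing to the quotient of $W^{cs}(\gamma)$ and $W^{cu}(\gamma)$ by the center flow. First I would invoke Theorem~\ref{thm.main-flow}: up to replacing $f$ by $f^2$ (using Remark~\ref{r.center-intersection-number} and Proposition~\ref{p.lift}, which do not change the $N$-th intersection sets nor the non-compactness hypothesis), we may assume all three bundles are orientable and $f$ preserves the orientation of $E^c$, so there is a continuous center flow $\{\varphi_t\}$ commuting with $f$ and with the strong holonomies. Since $\gamma$ is a periodic compact center leaf and $f$ has a non-compact center leaf, by Theorem~\ref{thm.leaf-structure} (applied to the $C^1$-diffeomorphism, whose topologically neutral center is in particular dynamically coherent so the leaf-structure dichotomy applies; alternatively invoke the completeness Proposition~\ref{p.complete} and the Reeb-type argument directly) the leaf $W^{cs}(\gamma)$ is a cylinder or M\"obius band foliated by center leaves, with $\gamma$ the unique compact center leaf it contains — otherwise Proposition~\ref{p.all-compact}/Theorem~\ref{thm.bonatti-wilkinson} would put us in the skew-product case contradicting the existence of a non-compact center leaf. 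The key point is that $W^{cs}(\gamma)$ is an annulus/M\"obius band \emph{foliated by the center flow}, so quotienting by $\varphi_t$ along the flow collapses each center leaf to a point and yields a quotient surface $\Sigma^s$ which is topologically $\RR$ (a half-open interval), and the induced map $\bar f$ on $\Sigma^s$ has a hyperbolic fixed point $\bar\gamma$ whose stable separatrix is all of $\Sigma^s$; the strong stable foliation inside $W^{cs}(\gamma)$, being $\varphi_t$-saturated (it commutes with the flow) and cutting $\gamma$ in exactly one point by Lemma~\ref{l.unique-intersection-for-periodic-center-leaf}, descends to the ``stable manifold'' $W^s(\bar\gamma)$ in $\Sigma^s$, and similarly in $W^{cu}(\gamma)$.

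Next I would make precise that this quotient is a genuine $C^0$ (indeed $C^1$ away from the critical identifications) surface dynamics: $\bar f: \Sigma^s \cup_{\bar\gamma} \Sigma^u \to$ itself, glued along $\bar\gamma$, behaves near $\bar\gamma$ like a hyperbolic saddle of a surface diffeomorphism — one contracting direction coming from $E^s$, one expanding from $E^u$, while the center direction has been quotiented away. The intersections $W^s(\gamma)\cap W^u(\gamma)$ descend bijectively (on the level of center leaves) to homoclinic intersections $W^s(\bar\gamma)\cap W^u(\bar\gamma)\setminus\{\bar\gamma\}$; this uses Lemma~\ref{l.invariance-number}, which says $n(\cdot)$ is constant along center leaves, so $n$ descends to a well-defined function $\bar n$ on the homoclinic set of $\bar\gamma$, and by construction of $I^{cs}(x), I^{cu}(x)$ (bounded by $\gamma$, a strong-stable/unstable segment, and a center segment), the number of connected components of $I^{cu}(x)\cap I^{cs}(x)\setminus\{\gamma\}$ equals the number of intersection points $\#(I^s_{\bar x}\cap I^u_{\bar x}\setminus\{\bar\gamma\})$ of the corresponding segments in $\Sigma$, i.e. $\bar n(\bar x) = n(x)$. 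I would then need that $\bar\gamma$ has no homoclinic tangency: this follows from the transversality of $E^s$ and $E^u$ inside the ambient $3$-manifold, which forces $W^s(\gamma)$ and $W^u(\gamma)$ to be transverse wherever they meet, and this transversality is inherited by the quotient since the strong foliations are $\varphi_t$-saturated in $W^{cs}(\gamma), W^{cu}(\gamma)$.

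With the quotient set up, I apply Proposition~\ref{p.finite-intersection} to the surface saddle $\bar\gamma$: for any $N$, the set of $\bar f$-orbits of homoclinic points $\bar x$ with $\bar n(\bar x)\le N$ is finite. Pulling back, the $f$-orbits (equivalently, since $n$ is $f$-invariant and center-leaf-invariant, the center leaves) $\cF^c(x)\subset W^s(\gamma)\cap W^u(\gamma)\setminus\{\gamma\}$ with $n(x)\le N$ are finitely many — because distinct center leaves with $n\le N$ project to distinct homoclinic points on distinct $\bar f$-orbits (note: a single $\bar f$-orbit of a homoclinic point corresponds, upstream, to a single $f$-orbit of center leaves, hence finitely many center leaves in the orbit), giving the stated finiteness. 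The main obstacle I expect is purely the topological bookkeeping of the quotient: one must verify carefully that $W^{cs}(\gamma)$ really is an annulus or M\"obius band admitting a global flow-box decomposition so the quotient by $\varphi_t$ is Hausdorff and homeomorphic to a $1$-manifold-with-boundary (this is where uniform compactness of $\gamma$'s neighborhood, completeness from Proposition~\ref{p.complete}, and the absence of other compact leaves are all used), and that the saddle dynamics of $\bar f$ near $\bar\gamma$ genuinely has no tangency — intuitively clear from ambient transversality, but needing the observation that the center flow preserves the strong foliations (proved in the course of Lemma~\ref{l.invariance-number}) so that transversality passes to the quotient.
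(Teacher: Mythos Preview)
Your approach has a genuine gap at the final step. Proposition~\ref{p.finite-intersection} bounds the number of \emph{orbits} of homoclinic points with $n\le N$, not the number of points --- in the surface setting each homoclinic orbit is infinite. So even granting the quotient construction, you would obtain only finitely many $f$-orbits of homoclinic center leaves, whereas the statement asks for finitely many center \emph{leaves}. Your closing sentence (``a single $\bar f$-orbit \ldots\ hence finitely many center leaves in the orbit'') assumes each such center leaf is $f$-periodic, but that is exactly Corollary~\ref{c.center-leaf-all-periodic}, which is deduced \emph{from} this proposition; the argument is circular. The Remark immediately following the statement flags precisely this distinction between leaves and orbits. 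There is also a problem earlier: the quotient of $W^{cs}(\gamma)$ by the center flow is not Hausdorff at $\bar\gamma$, because every non-compact center leaf in $W^{cs}(\gamma)$ spirals onto $\gamma$ (each meets $I^{ss}_x$ infinitely often, as noted just before the definition of $I^{cs}(x)$), so every point of $\Sigma^s\setminus\{\bar\gamma\}$ has $\bar\gamma$ in its closure and there is no surface saddle to which Proposition~\ref{p.finite-intersection} applies. This is not a bookkeeping issue that good flow-boxes can fix; the spiraling is intrinsic.

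The paper avoids both issues by working directly in $W^{cs}(\gamma)$ and $W^{cu}(\gamma)$ without quotienting. The key idea is to exploit the spiraling rather than collapse it: given $x$ realizing the maximal value of $n(\cdot)\le N$, any $y$ with $n(y)\le N$ can be slid \emph{along its own center leaf} (using the center flow, not iteration by $f$) onto a fixed short strong-unstable arc between $x$ and $x^u_{-1}$; one then shows (Claim~\ref{c.cs-included}) that $I^{cs}(y)\subset I^{cs}(x^s_{-2})$ for a fixed compact piece. Hence every such center leaf meets the compact set $I^{cs}(x^s_{-2})\cap I^{cu}(x^u_{-1})$, which has only finitely many connected components by transversality. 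Because the normalization moves within $\cF^c(y)$ rather than along the $f$-orbit, this directly counts center leaves rather than orbits.
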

 \begin{Remark}
 Here we prove the finiteness of center leaves where $n(\cdot)$ is bounded, whereas  Proposition~\ref{p.finite-intersection} gives the finiteness of   orbits with $n(\cdot)$ bounded.
 \end{Remark}

\proof

By Proposition~\ref{p.lift}, up to replacing $f$ by $f^2$, one can assume that $f$ preserves the orientation of the bundles $E^s,E^c, E^u.$ As  $E^s,E^c, E^u$ are orientable,   $\gamma$ separates its stable and unstable manifolds into two connected components respectively. Thus, we only need to work on one connected component $W^{s,+}(\gamma)$ of $W^{s}(\gamma)\setminus\gamma$ and one connected component $W^{u,+}(\gamma)$ of $W^{u}(\gamma)\setminus\gamma$,  and the other cases would follow analogously.

Fix $N\in\NN$ and   $x\in W^{u,+}(\gamma)\cap W^{s,+}(\gamma)$  such that $$n(x)=\sup\big\{n(y): y\in W^{u,+}(\gamma)\cap W^{s,+}(\gamma)\textrm{~and~}n(y)\leq N\big\}.$$

We keep the notations in the definitions of $I^{cu}(x)$ and $I^{cs}(x)$ (see Figure 2). Let $\{\varphi_{t}\}_{t\in\RR}$ be the center flow given by Theorem~\ref{thm.main-flow}. Without loss of generality, one can assume that $x_1^s$ is on the forward  orbit of $x$ under the center flow $\{\varphi_{t}\}_{t\in\RR}$. 
Let $x^s_{-1}$ and $x^s_{-2}$ be first and second   points that  the backward  orbit of $x$ under the center flow $\{\varphi_{t}\}_{t\in\RR}$ intersects with the strong stable manifold of $x$. Then $I^{cs}(x)\subset I^{cs}(x^s_{-1})\subset I^{cs}(x_{-2}^s)$. Analogously, one can define $x_{-1}^u$ in the strong unstable manifold of $x$, then $I^{cu}(x)\subset I^{cu}(x^u_{-1}).$ 

As $f$ has non-compact center leaves, by Proposition~\ref{p.all-compact}, $\gamma$ is the unique  compact center leaf in $W^s(\gamma)$ and in $W^u(\gamma)$. By Poincar\'e-Bendixson theorem, for each $\cF^c(x)\subset W^{s,+}(\gamma)\cap W^{u,+}(\gamma)\setminus\{\gamma\}$, one has   $ \gamma\subset \overline{\cF^c(x)}$.
For any $y\in W^{s,+}(\gamma)\cap W^{u,+}(\gamma)\setminus \big\{\gamma\cup\{x \}\big\}$,  by Lemma~\ref{l.invariance-number}, up to replacing $y$ by a point  in $\cF^c(y)$, one can assume that $y$ belongs to the strong unstable segment bounded by $x$ and $x^u_{-1}$, then $I^{cu}(x)\subset I^{cu}(y)\subset I^{cu}(x^u_{-1}) .$
\begin{claim}~\label{c.cs-included}
 If $n(y)\leq N$, then $I^{cs}(y)\subset I^{cs}(x^s_{-2})$.
\end{claim}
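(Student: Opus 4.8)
The plan is to argue by contradiction, mirroring the surface-diffeomorphism argument in the proof of Proposition~\ref{p.finite-intersection} but working "modulo the center flow" and keeping track of the center-stable and center-unstable regions $I^{cs}$, $I^{cu}$ rather than points. Recall we have fixed $x\in W^{u,+}(\gamma)\cap W^{s,+}(\gamma)$ realizing the supremum of $n(\cdot)$ over intersections with $n\le N$, and that $y$ has been normalized (using Lemma~\ref{l.invariance-number}, which lets us slide $y$ along $\cF^c(y)$ without changing $n(y)$) to lie on the strong unstable segment between $x$ and $x^u_{-1}$, so that $I^{cu}(x)\subset I^{cu}(y)\subset I^{cu}(x^u_{-1})$. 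Assume for contradiction that $n(y)\le N$ but $I^{cs}(y)\not\subset I^{cs}(x^s_{-2})$.

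First I would observe what the failure $I^{cs}(y)\not\subset I^{cs}(x^s_{-2})$ means geometrically: since $I^{cs}(x^s_{-2})$ is the center-stable region bounded by $\gamma$, the strong stable segment $I^{ss}_{x}$, and the center segment $(x^s_{-2},x)^c$ (the first two backward $\varphi_t$-returns of $x$ to $\cF^{ss}(x)$), the hypothesis forces the strong stable separatrix $I^{ss}_y\subset\cF^{ss}(y)$ from $y$ to $y^s\in\gamma$ to run "past" the boundary center segment $(x^s_{-2},x)^c$. Because the center flow $\{\varphi_t\}$ commutes with the strong stable holonomy (Theorem~\ref{thm.main-flow} together with Theorem~\ref{thm.main-metric}), and because the strong stable leaves foliate $W^s(\gamma)$, this means that when we flow $x$ forward along $\{\varphi_t\}$ past the point $x^s_1$ — equivalently, when we push $I^{cs}(x)$ across one $\varphi_t$-period in the center direction — we land inside $I^{cs}(y)$ with room to spare: concretely, some forward-flowed copy $\varphi_{t_0}(x)$ of $x$ with $t_0>0$ satisfies $I^{cs}(\varphi_{t_0}(x))\subsetneq I^{cs}(y)$ while $I^{cu}(\varphi_{t_0}(x))\supset I^{cu}(x)$ is controlled. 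The point of choosing $x^s_{-2}$ (two returns, not one) is exactly to absorb the discrepancy between $x$ and $\varphi_{t_0}(x)$ and still conclude strict nesting.

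The heart of the argument is then a counting step. Using strict nesting $I^{cs}(\varphi_{t_0}(x))\subsetneq I^{cs}(y)$ and $I^{cu}(x)\subsetneq I^{cu}(y)$ (after the normalization), together with the absence of compact center leaves in $W^s(\gamma)$ and $W^u(\gamma)$ other than $\gamma$ (Proposition~\ref{p.all-compact}, which applies since $f$ has a non-compact center leaf), I would show that the number of connected components of $I^{cu}(y)\cap I^{cs}(y)\setminus\{\gamma\}$ strictly exceeds that of $I^{cu}(x)\cap I^{cs}(x)\setminus\{\gamma\}$ — the extra crossing of the strong stable leaf of $x$ by $\cF^c(y)$, guaranteed by $\gamma\subset\overline{\cF^c(y)}$ via Poincar\'e--Bendixson, produces at least one new component. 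Hence $n(y)\ge n(x)+1$. Combined with $n(y)\le N$ this contradicts the choice of $x$ as realizing the supremum of $\{n(z):n(z)\le N\}$, proving the claim.

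The main obstacle I anticipate is the bookkeeping in the second paragraph: making rigorous the statement that the center flow "pushes $I^{cs}(x)$ across a period and lands strictly inside $I^{cs}(y)$" requires carefully using the completeness of $\cF^{cs}$ (Proposition~\ref{p.complete}), the fact that $\cF^{ss}(z)\cap\gamma$ is a single point for each $z$ (first item of Lemma~\ref{l.unique-intersection-for-periodic-center-leaf}), and the $\{\varphi_t\}$-invariance of the strong foliations, and then checking that two backward returns $x^s_{-1},x^s_{-2}$ genuinely suffice — i.e., that the "slack" introduced by replacing $x$ by $\varphi_{t_0}(x)$ is at most one center-return's worth. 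Everything else (the Poincar\'e--Bendixson input, the orientability reductions, and the final counting contradiction) is routine once this nesting is pinned down.
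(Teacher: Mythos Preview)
Your overall strategy --- argue by contradiction and show $n(y)\geq n(x)+1$, contradicting the maximality of $n(x)$ --- is exactly the paper's. But your execution is considerably more elaborate than necessary, and the detour through $\varphi_{t_0}(x)$ introduces a direction confusion that the paper's argument avoids entirely.

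The paper's proof is four lines and never invokes the center flow explicitly. From the assumption $I^{cs}(y)\not\subset I^{cs}(x^s_{-2})$ it deduces directly that $y\notin I^{cs}(x^s_{-1})$ and $I^{cs}(x^s_{-1})\subset I^{cs}(y)$. Since $I^{cs}(x)\subset I^{cs}(x^s_{-1})$ and $I^{cu}(x)\subset I^{cu}(y)$ (the latter from the normalization of $y$), every connected component of $I^{cs}(x)\cap I^{cu}(x)\setminus\{\gamma\}$ is already a component of $I^{cs}(y)\cap I^{cu}(y)\setminus\{\gamma\}$; the component containing $\cF^c(y)$ is a new one because $\cF^c(y)$ is disjoint from $I^{cs}(x)\cap I^{cu}(x)$. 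That is the whole argument.

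Your plan to ``flow $x$ forward past $x^s_1$'' is the wrong direction: forward flow \emph{shrinks} $I^{cs}$, it does not enlarge it, so $I^{cs}(\varphi_{t_0}(x))\subset I^{cs}(x)$ and the inclusion $I^{cs}(\varphi_{t_0}(x))\subsetneq I^{cs}(y)$ you aim for is weaker, not stronger, than what is needed for the counting. The enlarged regions $I^{cs}(x^s_{-1})\subset I^{cs}(x^s_{-2})$ built from the \emph{backward} returns already encode the slack you are trying to manufacture. Once you see that the sole purpose of going to $x^s_{-2}$ is to guarantee $I^{cs}(x^s_{-1})\subset I^{cs}(y)$ (one extra level of nesting, which is what forces $\cF^c(y)$ to miss $I^{cs}(x)\cap I^{cu}(x)$), the ``bookkeeping obstacle'' you anticipate dissolves and no separate flowing of $x$ is required.
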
 
\proof
Assume, on the contrary, that $I^{cs}(y)\nsubseteq  I^{cs}(x^s_{-2})$, then   $y\notin I^{cs}(x^s_{-1}) $ and  $I^{cs}(x^s_{-1})\subset I^{cs}(y)$, which implies that $\cF^c(y)$ is disjoint from $I^{cs}(x)\cap I^{cu}(x)$. Since $I^{cs}(x^s_{-1})\subset I^{cs}(y)$ 	and $I^{cu}(x)\subset I^{cu}(y)$, 
the cardinal of the connected components of  $(I^{cs}(y)\cap I^{cu}(y))$ is larger than the cardinal of the connected components of $I^{cs}(x)\cap I^{cu}(x)$, which  gets the contradiction.
\endproof
By Claim~\ref{c.cs-included}, for any $y\in W^{s,+}(\gamma)\cap W^{u,+}(\gamma)\setminus \big\{\gamma\cup\{x \}\big\}$ with $n(y)\leq N$, one has   $$\cF^c(y)\cap \big(I^{cs}(x^s_{-2})\cap I^{cu}(x^u_{-1})\big)\neq\emptyset.$$ By the compactness of $I^{cs}(x^s_{-2})$and $ I^{cu}(x^u_{-1})$, and the uniform transversality between $E^{cs}$ and $E^{cu}$, the set $I^{cs}(x^s_{-2})\cap I^{cu}(x^u_{-1})$ has finitely many  connected components, which implies 
$$\#	\big\{\cF^c(y): \cF^c(y)\subset W^s(\gamma)\cap W^u(\gamma)\setminus\{\gamma\} \textrm{ and  } n(y)\leq N\big\}<\infty.$$
\endproof

We conclude this section by  the following result. 
\begin{corollary}~\label{c.center-leaf-all-periodic}
Under the assumption of Proposition~\ref{p.finite-center-leaf}, 	all the center leaves in $W^s(\gamma)$ and $W^u(\gamma)$ are periodic under $f$.
\end{corollary}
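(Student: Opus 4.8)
The plan is to settle first the center leaves that lie in $W^s(\gamma)\cap W^u(\gamma)$ by the intersection–number mechanism of Proposition~\ref{p.finite-center-leaf}, and then to propagate periodicity to \emph{all} center leaves of $W^s(\gamma)$ (and symmetrically of $W^u(\gamma)$) by an approximation argument.

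As in Proposition~\ref{p.finite-center-leaf} we may assume $f$ preserves the orientations of $E^s,E^c,E^u$, and by Proposition~\ref{p.lift} (applied with the trivial cover) together with Remark~\ref{r.center-intersection-number} we may further replace $f$ by an iterate so that $\gamma$ is $f$-invariant; this changes neither the numbers $n(\cdot)$ nor the statement, and it makes $W^s(\gamma)=\cF^{cs}(\gamma)$ and $W^u(\gamma)=\cF^{cu}(\gamma)$ $f$-invariant. For $x\in(W^s(\gamma)\cap W^u(\gamma))\setminus\gamma$ the sets $I^{cs}(x)$ and $I^{cu}(x)$ are compact subsurfaces of the uniformly transverse leaves $\cF^{cs}(\gamma)$ and $\cF^{cu}(\gamma)$, so $I^{cs}(x)\cap I^{cu}(x)$ has finitely many connected components; thus $n(x)<+\infty$. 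Fix $N\in\NN$ and let $\cL_N$ denote the set of center leaves contained in $(W^s(\gamma)\cap W^u(\gamma))\setminus\gamma$ on which $n(\cdot)\le N$. By Proposition~\ref{p.finite-center-leaf} the set $\cL_N$ is finite, and by Lemma~\ref{l.invariance-number}(1), together with the $f$-invariance of $\gamma$, $W^s(\gamma)$ and $W^u(\gamma)$, it is $f$-invariant. A bijection of a finite set is a product of cycles, hence every leaf of $\cL_N$ is $f$-periodic; since $n(\cdot)<+\infty$ on all of $(W^s(\gamma)\cap W^u(\gamma))\setminus\gamma$, letting $N\to\infty$ shows that every center leaf contained in $W^s(\gamma)\cap W^u(\gamma)$ is $f$-periodic.

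Now let $L\subset W^s(\gamma)$ be an arbitrary center leaf with $L\neq\gamma$. Since $f$ has a non-compact center leaf, Proposition~\ref{p.all-compact} forbids two distinct compact center leaves inside $W^s(\gamma)$ (otherwise, up to finite lifts and iterates, $f$ would be $C^0$-conjugate to a skew-product, all of whose center leaves are compact); hence $\gamma$ is the only compact center leaf of $W^s(\gamma)$, and a Poincar\'e--Bendixson argument on the one-dimensional center foliation of the surface $W^s(\gamma)$, as in the proof of Proposition~\ref{p.all-compact}, gives $\overline{L'}\cap W^s(\gamma)\subseteq L'\cup\gamma$ for every center leaf $L'\subset W^s(\gamma)$ with $L'\neq\gamma$, the closure being taken inside $W^s(\gamma)$. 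On the other hand, by Lemma~\ref{l.unique-intersection-for-periodic-center-leaf}(2) the set $W^s(\gamma)\cap W^u(\gamma)$ is dense in $W^s(\gamma)$, so there are points $x_m\to x_0\in L$ in $W^s(\gamma)$ with $x_m$ on center leaves $L_m\subset W^s(\gamma)\cap W^u(\gamma)$. Because $n(\cdot)$ is constant along center leaves (Lemma~\ref{l.invariance-number}(2)) one has $n(L_m)=n(x_m)$; since $x_0\notin\gamma$, the points $x_m$ eventually remain in a compact part of $W^s(\gamma)$ bounded away from $\gamma$, on which the pieces $I^{cs}(\cdot),I^{cu}(\cdot)$ have uniformly bounded size and hence a uniformly bounded number of connected components in their intersection, so $n(L_m)\le N_0$ for all large $m$. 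Thus $L_m$ belongs to the finite set $\cL_{N_0}$, and after passing to a subsequence we may assume $L_m=L_\infty$ for all $m$. Then $x_0$ lies in the closure of $L_\infty$ inside $W^s(\gamma)$; as $x_0\notin\gamma$, the inclusion $\overline{L_\infty}\cap W^s(\gamma)\subseteq L_\infty\cup\gamma$ forces $x_0\in L_\infty$, whence $L=L_\infty\in\cL_{N_0}$ and $L$ is $f$-periodic. The argument for the center leaves of $W^u(\gamma)$ is symmetric.

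The main obstacle is this last step: a priori $W^s(\gamma)\cap W^u(\gamma)$ is only a dense, proper, center-saturated subset of $W^s(\gamma)$, so one must (i) prevent the intersection numbers — equivalently the $f$-periods — of the approximating leaves $L_m$ from blowing up, which is achieved by evaluating the leaf-constant function $n(\cdot)$ at a point staying at positive distance from $\gamma$ and invoking the uniform transversality of $\cF^{cs}$ and $\cF^{cu}$ together with the finiteness in Proposition~\ref{p.finite-center-leaf}; and (ii) control the leaf topology near $\gamma$ (non-separated leaves), which is where the Poincar\'e--Bendixson theorem and the fact that $\gamma$ is the unique compact center leaf of $W^s(\gamma)$ are used.
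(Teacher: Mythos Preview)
Your first step---showing that every center leaf in $W^s(\gamma)\cap W^u(\gamma)$ is periodic via the finiteness of $\cL_N$ and the $f$-invariance of $n(\cdot)$---matches the paper exactly. The gap is in the second step, in the claim that $n(L_m)\le N_0$ for large $m$.

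You justify this by saying that the points $x_m$ stay in a compact part of $W^s(\gamma)$ bounded away from $\gamma$, ``on which the pieces $I^{cs}(\cdot),I^{cu}(\cdot)$ have uniformly bounded size''. The first half is fine: the strong stable segment $I^{ss}_{x_m}$ has bounded length since $x_m\to x_0\in W^s(\gamma)\setminus\gamma$, so $I^{cs}(x_m)$ stays bounded in $W^s(\gamma)$. But the second half fails. The piece $I^{cu}(x_m)$ lives in $W^u(\gamma)$, and its size is governed by the \emph{strong unstable} distance from $x_m$ to $\gamma$. If $x_0\notin W^u(\gamma)$ (which is exactly the generic case you are trying to handle), then as $x_m\to x_0$ the strong unstable arc $I^{uu}_{x_m}$ from $x_m$ to $\gamma$ must have length tending to infinity---otherwise, by compactness, $x_0$ would lie on $W^u(\gamma)$. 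Thus $I^{cu}(x_m)$ becomes arbitrarily large and may cross the fixed-size $I^{cs}(x_m)$ in more and more components; there is no reason for $n(x_m)$ to stay bounded. The argument then collapses: you cannot force the $L_m$ into a single finite $\cL_{N_0}$, so you cannot conclude $L_m$ is eventually constant.

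The paper sidesteps this entirely by passing to the leaf space. Since $\gamma$ is the unique compact center leaf in the cylinder $W^s(\gamma)$, the space of center leaves in each component of $W^s(\gamma)\setminus\gamma$ is identified with a circle, and $f$ induces a circle homeomorphism on it. The first step gives a \emph{dense} set of periodic points for this homeomorphism; but a circle homeomorphism with a dense set of periodic points has some iterate equal to the identity, so \emph{every} point is periodic. No control on the periods (equivalently, on $n(\cdot)$) of the approximating leaves is needed.
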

\proof
We claim that each center leaf in $\cF^c(x)\subset W^s(\gamma)\cap W^u(\gamma)$ is periodic under $f$. By Proposition~\ref{p.finite-center-leaf}, one has  
$$\#	\big\{\cF^c(y): \cF^c(y)\subset W^s(\gamma)\cap W^u(\gamma) \textrm{ and  } n(y)\leq n(x)\big\}<\infty.$$
By Lemma~\ref{l.invariance-number}, for each $k\in\ZZ$, one has 
$$\cF^c(f^k(x))\subset	\big\{\cF^c(y): \cF^c(y)\subset W^s(\gamma)\cap W^u(\gamma) \textrm{ and  } n(y)\leq n(x)\big\},$$ 
which implies that $\cF^c(x)$ is periodic under $f$.

By Lemma~\ref{l.unique-intersection-for-periodic-center-leaf}, the intersection $W^s(\gamma)\cap W^u(\gamma)$ is a dense subset of $W^s(\gamma)$.  
As $W^s(\gamma)$ is a cylinder and $\gamma$ is periodic under $f$, in each connected component of $W^s(\gamma)\setminus\{\gamma\}$, the space of center leaves is identified with $S^1$ and $f$ induces a homeomorphism on it.  Therefore, the set of periodic points for  the induced maps on $S^1$  is dense in $S^1$, which implies that the induced maps on $S^1$ are periodic. 
\endproof
 
\subsection{Proof of Theorem~\ref{thm.main}}
Now, we are ready to give the proof of Theorem~\ref{thm.main}. The proof is carried out according to the topology of the center stable leaves. 

\proof[Proof of Theorem~\ref{thm.main}]
By Proposition~\ref{p.existence-of-compact-center-leaf}, $f$ has compact center leaves.

If there exists a  compact center leaf which is non-periodic under $f$, then  by the `moreover' part of Proposition~\ref{p.existence-of-peiodic-compact-center-leaf} there exists a compact periodic center leaf and Proposition~\ref{p.all-compact} gives us that $f$ is, up to finite lifts,  $C^0$-conjugate to a skew-product. Therefore, up to finite lifts, $f$ is conjugate to a skew-product and also $f$ preserves a volume on the center fibers ($S^1$),  thus, $f$ is conjugate to a skew-product of an Anosov diffeomorphism on $\mathbb{T}^2$ over the rotations on the circle.

It remains to prove the case where   all  the compact center leaves are periodic under $f$. By Proposition~\ref{p.lift}, up to finite iterates and lifts, $f$ satisfies the assumption of Proposition~\ref{p.finite-center-leaf}. By Corollary~\ref{c.center-leaf-all-periodic} and the second item in Theorem~\ref{thm.bonatti-wilkinson},   up to finite iterates and lifts, each center leaf is $f$-invariant.  Let $(\varphi_t)_{t\in\RR}$ be the center flow given by Theorem~\ref{thm.main-flow}. Let $x_0$ be a point whose orbit under $f$ is dense. 
As each center leaf is $f$ invariant and $f$ commutes with the center flow. 
Then there exists $t_0\in\RR\setminus\{0\}$ such that $f|_{\cF^c(x_0)}=\varphi_{t_0}|_{\cF^c(x_0)}$. Since the orbit of $x_0$ is dense and $f$ commutes with the center flow, one has $f=\varphi_{t_0}$.  In particular, this implies the center flow is transitive. Moreover, there exists $\lambda>0$ such that for any two points $x,y $ on the same strong stable  manifold of $f$, one has 
$$\limsup_{t\rightarrow+\infty}\frac{1}{t}\log\ud\big(\varphi_t(x),\varphi_t(y)\big)< -\lambda$$
An analogous statement for strong unstable also holds.
\endproof

\appendix
\section{Periodic compact center leaves generated by a uniformly compact lamination}

In this section, we prove the existence of periodic compact  center leaves near a compact invariant set which is laminated by compact center leaves. The proof adopts a variation of Bowen's ~\cite{Bow} construction of shadowing lemma for hyperbolic sets which has been used in ~\cite{Ca}.

\begin{proposition}~\label{p.shadow-periodic}
	Let $f$ be a dynamically coherent partially hyperbolic diffeomorphism on $M$   and  $\Lambda$ be a compact invariant set. Assume that 
	\begin{itemize}
		\item every center leaf through  $x\in\Lambda$ is compact and contained in $\La$;
		\item the volume of the center leaves vary continuously restricted to $\Lambda$. 
	\end{itemize} 
Then for any $\e>0$,  there exists a compact and periodic center leaf $L_\e$ in the $\e$-neighborhood of $\Lambda$. Furthermore, if $L_\e\cap\Lambda=\emptyset$, the center stable leaf of $L_\e$ contains another compact  center leaf different from $L_\e$.
	
\end{proposition}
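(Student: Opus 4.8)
\emph{Proof proposal.} The plan is to produce an almost--periodic pseudo--orbit in the ``space of center leaves'' near $\Lambda$ and then to close it up by a Bowen--type shadowing argument run transversally to $\cF^c$, along the lines of \cite{Bow} and \cite{Ca}, but following center plaques rather than points. First I would use the continuity of the volume function on $\Lambda$, together with the compactness of $\Lambda$ and the continuity of the center foliation, to upgrade the hypotheses to a uniform compactness statement (a local argument in the spirit of the bad sets of~\cite{E}): there is $V_0>0$ bounding the volume of every center leaf meeting $\Lambda$, and an $\eta_0>0$ such that every such leaf $L$ carries a tubular neighbourhood of inner radius $\ge\eta_0$ with a $C^1$ projection $\pi_L\colon U_L\to L$ with fibres transverse to $\cF^c$, all this varying continuously with $L\subset\Lambda$ in the Hausdorff sense. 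I expect this to be the least delicate point; it is what allows the foliated charts used below to have uniform size along $\Lambda$.

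Next, pick $x_0\in\Lambda$ recurrent for $f|_\Lambda$ (possible since $\Lambda$ is compact and $f$--invariant) and set $L_0=\cF^c(x_0)$. For a prescribed small scale one chooses $n\in\NN$ with $f^n(x_0)$ so close to $x_0$ that $L_n:=f^n(L_0)$ lies in the $\eta_0$--tubular neighbourhood of $L_0$ and conversely; then $L_0,f(L_0),\dots,f^n(L_0)=L_n$ is a periodic $\delta$--pseudo--orbit of compact center leaves of $\Lambda$, with $\delta>0$ as small as we wish. I would then cover a neighbourhood of $\Lambda$ by finitely many charts adapted simultaneously to $\cF^{cs}$, $\cF^{cu}$ and $\cF^c$, as in Section~\ref{s.center-flow}; in such a chart, quotienting by the center plaques turns $\cF^{cs}$ and $\cF^{cu}$ into a pair of transverse foliations of a $(\dim E^s+\dim E^u)$--disk, and $f$ post--composed with the relevant chart--to--chart holonomies induces a local germ that is hyperbolic by the domination inequalities: it contracts the ``strong stable'' transverse direction and expands the ``strong unstable'' one. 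Running Bowen's construction on the space of finite sequences of transverse positions $(w_0,\dots,w_n)$ that $\e$--shadow the pseudo--orbit, subject to $w_n=w_0$, the operator that recomputes the strong--stable coordinate of $w_{i+1}$ by pushing $w_i$ forward and the strong--unstable coordinate of $w_i$ by pulling $w_{i+1}$ backward is a uniform contraction, and its unique fixed point $(w_i^{*})$ determines a closed cycle of center plaques $\sigma_0\!\to\!\dots\!\to\!\sigma_n=\sigma_0$ glueing into a single $f^n$--invariant center leaf $L_\e$ contained in the $\e$--neighbourhood of $\Lambda$.

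It remains to see that $L_\e$ is compact and to produce the second leaf. The fixed point locates $L_\e$ inside $W^{cs}_{\mathrm{loc}}(L_0)\cap W^{cu}_{\mathrm{loc}}(\widetilde L_0)$ for suitable compact leaves $L_0,\widetilde L_0\subset\Lambda$, the first governing the backward--contracted (strong stable) coordinate and the second the forward--contracted (strong unstable) one. Since $L_0$ and $\widetilde L_0$ are compact and close, a product/graph argument in the charts shows this intersection is a compact lamination by compact center leaves, each a graph of a transverse disk over $L_0$; hence $L_\e$ is a compact center leaf, of volume at most a constant multiple of $V_0$, and it is periodic of period dividing $n$. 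Finally $W^{cs}(L_\e)=W^{cs}(L_0)$ contains the compact center leaf $L_0\subset\Lambda$, so if $L_\e\cap\Lambda=\emptyset$ then $L_0\ne L_\e$ is the asserted second compact center leaf in the center stable manifold of $L_\e$.

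The main obstacle is the combination of the transverse Bowen argument with this compactness check. The ``transverse dynamics'' is only defined through holonomies and is not literally a diffeomorphism, so one must phrase the contraction on sequences of \emph{admissible center plaques} rather than points, verify that domination still yields a genuine, uniform hyperbolic splitting for this holonomy germ along the pseudo--orbit, and --- this is precisely where the continuity of the volume on $\Lambda$ is used --- ensure that the closed cycle of plaques assembles into a \emph{compact} leaf and not a leaf spiralling forever inside the tubular neighbourhood; the local product picture $W^{cs}_{\mathrm{loc}}(L_0)\cap W^{cu}_{\mathrm{loc}}(\widetilde L_0)$ is the device that excludes this. Once the setup is fixed, the shadowing bookkeeping is routine.
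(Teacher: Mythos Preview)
Your overall strategy is the paper's: close up a recurrence of a compact leaf $L_0=\cF^c(x_0)\subset\Lambda$ by a Bowen-type shadowing run transversally to the center foliation. The gap is the assertion that the resulting periodic leaf $L_\e$ lies in $W^{cs}_{\mathrm{loc}}(L_0)\cap W^{cu}_{\mathrm{loc}}(\tilde L_0)$ for some compact leaves $L_0,\tilde L_0\subset\Lambda$; this is what you use both to force compactness of $L_\e$ (via the graph argument) and to produce the second compact leaf (via $W^{cs}(L_\e)=W^{cs}(L_0)$). But the periodic orbit that shadows a periodic pseudo-orbit is in general on neither the stable nor the unstable leaf of the pseudo-orbit's base point --- already for a linear Anosov map, the periodic point closing a recurrence of a non-periodic $x_0$ has both transverse coordinates nonzero in the chart centred at $x_0$. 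Your contraction-operator formulation does not change this: at the fixed point $(w_i^*)$ one obtains that $f(w_i^*)$ and $w_{i+1}^*$ lie on the same center plaque, but nothing pins $w_0^*$ to the local $cs$- or $cu$-leaf of any leaf of $\Lambda$. With that premise gone, neither the compactness of $L_\e$ nor the inclusion $L_0\subset W^{cs}(L_\e)$ is established, and spiralling of $\cF^c(w_0^*)$ inside the tube is not excluded.

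The paper handles compactness by running Bowen's iteration in its explicit form rather than as an abstract fixed point: it first proves a local-product lemma stating that for any two compact ``graph'' leaves $L_1,L_2$ in a thickening $\Lambda(\delta)$ of $\Lambda$, the set $W^s_\e(L_1)\cap W^u_\e(L_2)$ is again a single compact graph leaf; then it defines inductively $\hat L_i:=W^u_{\e/2}(f^k(\hat L_{i-1}))\cap W^s_{\e/2}(L_{x_0})$, so that compactness is carried by the lemma at every step and persists in the limit leaf $L$ (periodicity of $L$ is then read off via the plaque-expansiveness theorem of \cite{HPS}). Once $L_\e$ is known to be a compact graph leaf close to $L_{x_0}$, the ``furthermore'' follows not from $L_{x_0}\subset W^{cs}(L_\e)$ but from the same lemma: $L':=W^s_\e(L_\e)\cap W^u_\e(L_{x_0})$ is a compact center leaf contained in $W^{cs}(L_\e)$, and if $L'=L_\e$ then $L_\e\subset W^{cu}(L_{x_0})$, whence negative iteration (contracting $E^u$) forces $L_\e\subset\overline{\{f^{-jk}(L_{x_0}):j\ge 0\}}\subset\Lambda$, contrary to $L_\e\cap\Lambda=\emptyset$.
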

\proof
As each center leaf   in $\Lambda$ is compact,  one associates each center leaf $L\subset\Lambda$  to a tubular neighborhood $V_L$ of $L$ together with the  $C^1$-projection $\pi_L: V_L\rightarrow L$ such that for any $x\in L$, $\pi_L^{-1}(x)$ is a $C^1$ disc of co-dimension $\dim(L)$ which is transverse to the center foliation (see for instance~\cite[Chapter IV, Lemma 2]{CN}).  As the volume of the center leaves vary continuously in $\Lambda$, up to shrinking $V_L$, one can assume that
\begin{itemize}
	\item for any $x\in \Lambda\cap V_L$, the center leaf  $L_x$ is contained in $ V_L$;
	\item for each $y\in L$, the intersection $L_x\cap\pi_L^{-1}(y)$ is unique. 
\end{itemize}   
Then by the compactness of $\Lambda$, there exist compact center leaves  $L_1,\cdots,L_m$ in $\Lambda$ such that  their tubular neighborhoods $(V_{L_i},\pi_{L_i})$ chosen as above  form an open cover of $\Lambda$ (i.e. $\Lambda\subset \cup_{i=1}^m V_{L_i}$). For simplicity, we denote $V_i=V_{L_i}$ and $\pi_i=\pi_{L_i}$. By a standard argument, one gets $\delta_0>0$ such that for any center leaf $L\subset \Lambda$, there exists $1\leq i\leq m$ such that the $\delta_0$-tubular neighborhood of $L$ is in $V_i.$

Fix $\delta\in(0,\delta_0/2)$  and  define $\Lambda(\delta)$ as the set of points $x\in M$ with the following properties: 
\begin{itemize}
	\item center leaf $L_x$ is compact;
	\item  there exists a center leaf $L\subset \Lambda$  such that  $L_x$  is in  the closure of the $\delta$-tubular neighborhood of $L$;
	\item  $L_x$ intersects each fiber of  $\pi_i$ into a unique point, where $V_i$ contains the $\delta_0$-tubular neighborhood of $L$. 
\end{itemize}  
By definition, $\Lambda(\delta)$ is compact.
Notice that for any $\e\in(0,\delta_0/8)$ small enough one has that  for any two points $x,y\in M$,  
\begin{itemize}
	\item if $W^{ss}_{2\e}(x)\cap W_{2\e}^{cu}(y)\neq\emptyset$ (resp. $W^{uu}_{2\e}(x)\cap W_{2\e}^{cs}(y)\neq\emptyset$ ), then such intersection consists of a unique point;
	\item if  $\ud(x,y)>\delta$, then  $W_{2\e}^{cs}(y)\cap W^{cu}_{2\e}(x)=\emptyset$. 
\end{itemize} 
For $\e$, there exists  $\delta_1\in(0,\delta)$  such that for $x,y\in M$ with $\ud(x,y)<\delta_1$, one has 
\begin{itemize}
	\item $\cF^{ss}_{2\e}(x)\cap \cF^{cu}_{2\e}(y)=\cF^{ss}_{\e/2}(x)\cap \cF^{cu}_{\e/2}(y)$;
	\item $\cF^{ss}_{2\e}(x)\cap \cF^{cu}_{2\e}(y)$ consists of exactly one  point.
\end{itemize}

\begin{claim}~\label{c.unique-center-leaf}
Given  two compact center leaves $L_1,L_2\in\Lambda(\delta)$ satisfying that $L_1$ is contained in the $\delta_1$-tubular neighborhood of $L_2$, one has 
\begin{itemize}
	\item $W_{2\e}^s(L_1)\cap W_{2\e}^u(L_2)=W_{\e/2}^s(L_1)\cap W_{\e/2}^u(L_2)$;
	\item $W_{2\e}^s(L_1)\cap W_{2\e}^u(L_2)$ consists of  exactly one  compact center leaf $L$.
\end{itemize}  Moreover,  for $x\in L_1$ (resp. $x\in L_2$), $L$ intersects $W^{ss}_{\e/2}(x)$ (resp. $W^{uu}_{\e/2}(x)$)  into a unique point. 
\end{claim}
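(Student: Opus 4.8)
The plan is to prove a local product structure between the ``stable tube'' $W^s_{2\e}(L_1)=\bigcup_{x\in L_1}\cF^{ss}_{2\e}(x)$ and the ``unstable tube'' $W^u_{2\e}(L_2)=\bigcup_{y\in L_2}\cF^{uu}_{2\e}(y)$ by constructing the leaf $L$ point by point along $L_1$ (equivalently along $L_2$), and then recognizing the resulting set as a single compact centre leaf via the strong stable holonomy. All the quantitative content is packaged into the transversality estimates already fixed when $\e$ and $\delta_1$ were chosen, together with the fact that $L_1,L_2\in\Lambda(\delta)$ meet each fibre of the relevant projection $\pi_i$ exactly once, so that both tubes are \emph{embedded} submanifolds-with-boundary of the corresponding centre-stable and centre-unstable leaves.

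\textbf{Leaf-wise construction.} Fix $x\in L_1$. Since $L_1$ lies in the $\delta_1$-tubular neighbourhood of $L_2$ and $L_2$ meets each fibre of $\pi_i$ in exactly one point, there is a unique $y=y(x)\in L_2$ on the fibre through $x$, with $\ud(x,y)<\delta_1$, and $y(x)$ depends continuously on $x$. By the choice of $\e$ and $\delta_1$, the strong stable plaque $\cF^{ss}_{2\e}(x)$ meets the local centre-unstable plaque $\cF^{cu}_{2\e}(y(x))$ in exactly one point $z_x$, and moreover $z_x\in\cF^{ss}_{\e/2}(x)\cap\cF^{cu}_{\e/2}(y(x))$. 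The next point is to check that $z_x$ actually belongs to $W^u_{\e/2}(L_2)$: $z_x$ sits on the local centre-unstable plaque of the point $y(x)$ of the \emph{compact} leaf $L_2\in\Lambda(\delta)$ and is $\e/2$-close to it in the $cu$-metric, so the strong unstable leaf through $z_x$ stays in the tube of $L_2$ and hits $L_2$ in a unique point within distance $\e/2$; here one uses precisely that $L_2$ is compact and meets each $\pi_i$-fibre once, which identifies the local centre-unstable plaque of $y(x)$ with (a piece of) the strong unstable saturation $W^u_{\e/2}(L_2)$. Exchanging the roles of $L_1$ and $L_2$ (now using $L_1\in\Lambda(\delta)$) produces the same point $z_x$ as the unique intersection of $W^{uu}_{\e/2}(\cdot)$ through a point of $L_2$ with $W^s_{\e/2}(L_1)$; this is the ``moreover'' part. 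Reading off scales, every point of $W^s_{2\e}(L_1)\cap W^u_{2\e}(L_2)$ is some $z_x$ (a point $z$ in the intersection forces, via the ``$\ud>\delta\Rightarrow$ empty'' dichotomy, the base points in $L_1,L_2$ to be $\delta$-close, hence the unique crossing of $\cF^{ss}_{2\e}(x)$ with the embedded tube $W^u_{2\e}(L_2)$), and $z_x\in W^s_{\e/2}(L_1)\cap W^u_{\e/2}(L_2)$; this gives the first displayed equality.

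\textbf{Identifying $L$.} Set $L:=\{z_x:x\in L_1\}=W^s_{2\e}(L_1)\cap W^u_{2\e}(L_2)$. Inside the centre-stable leaf of $L_1$, the map $x\mapsto z_x$ is the holonomy of the strong stable foliation from the transversal $L_1$ onto the transversal $L$ (the set $L$ is tangent to $E^c$, hence transverse to $\cF^{ss}$ within $\cF^{cs}$ — see below), so it is a local homeomorphism; thus $L$ is the image of the compact leaf $L_1$ under an immersion and is compact. Since $L\subset W^s_{2\e}(L_1)$ its tangent spaces lie in $E^{cs}$ and since $L\subset W^u_{2\e}(L_2)$ they lie in $E^{cu}$, whence $TL\subset E^c$; as $L$ is connected and everywhere tangent to the centre foliation, it is contained in a single centre leaf $\hat L$, and being a compact submanifold of the same dimension it is open and closed in $\hat L$, so $L=\hat L$ is a compact centre leaf. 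Uniqueness is immediate: any compact centre leaf contained in $W^s_{2\e}(L_1)\cap W^u_{2\e}(L_2)$ meets some $\cF^{ss}_{2\e}(x)$, $x\in L_1$, hence contains $z_x$, and the centre leaf through $z_x$ is unique, so it equals $L$.

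\textbf{Main obstacle.} The delicate step is the one in the middle of the second paragraph: making rigorous that, at the $\e$-scale, the local centre-unstable plaque of a point of the compact leaf $L_2$ coincides with the strong unstable saturation $W^u(L_2)$, i.e. that $W^u_{2\e}(L_2)$ is an \emph{embedded} tube with no self-crossings, and that every crossing with a strong stable plaque $\cF^{ss}_{2\e}(x)$ is transverse and unique. This is a completeness/no-holonomy statement which holds only because $L_2$ is compact and sits in $\Lambda(\delta)$ (so it meets the transverse $\pi_i$-fibres exactly once), and it has to be dovetailed carefully with the transversality constants selected when $\e$ and $\delta_1$ were fixed; once this is in place, all the intersections stay in the $\e/2$-scale and the rest is bookkeeping.
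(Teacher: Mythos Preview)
Your argument is correct and follows essentially the same route as the paper's proof: both use that $L_1,L_2\in\Lambda(\delta)$ forces the two compact leaves into a common foliated box $V_{i_0}$ where each meets every $\pi_{i_0}$-fibre exactly once, and then build the intersection leaf pointwise via the local product structure guaranteed by the choice of $\e$ and $\delta_1$. The only cosmetic difference is that the paper parametrises by the base leaf $L_{i_0}$ (taking $y_1\in L_1$, $y_2\in L_2$ on the fibre $\pi_{i_0}^{-1}(y)$) while you parametrise directly by $x\in L_1$; since $L_1$ meets each fibre once, this is the same thing. Your ``Identifying $L$'' paragraph (the $TL\subset E^{cs}\cap E^{cu}=E^c$ step and the strong stable holonomy from $L_1$ onto $L$) makes explicit what the paper leaves to the reader, and the ``main obstacle'' you flag---that $W^u_{2\e}(L_2)$ is an embedded tube with no self-crossings---is exactly why membership in $\Lambda(\delta)$ (unique intersection with each $\pi_{i_0}$-fibre) is part of the hypothesis.
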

\proof
By the definition of $\Lambda(\delta),$ there exists  $1\leq i_0\leq m$ such that 
 $V_{i_0}$ contains $L_1$ and $L_2$.
 Furthermore, for any point $y\in L_{i_0}$, the transverse section $\pi_{i_0}^{-1}(y)$ cuts $L_1$ and $L_2$ into a unique point respectively and we denote them by $y_1,y_2$. As $L_1$ is contained in the $\delta_1$-tubular neighborhood of $L_2$, one has that $W_{2\e}^{ss}(y_1)\cap W_{2\e}^{cu}(y_2)$ (resp. $W_{2\e}^{cs}(y_1)\cap W^{uu}_{2\e}(y_2)$) consists of  a unique point which is $\e/2$ close to $y_1$ and to $y_2$.  By the choice of $\e$, the intersection $W^{ss}_{2\e}(y_1)\cap W^{cu}_{2\e}(L_2)$ (resp. $W^{uu}_{2\e}(y_2)\cap W^{cu}_{2\e}(L_1)$) consists of exactly one  point which is $\e/2$ close to $y_1$ (resp. $y_2$), which concludes the claim.
\endproof
 

Since $\Lambda$ is compact and $f$-invariant, there exists a recurrent point $x_0\in \Lambda$.
 Due to the continuity of the volume of center leaves in $\Lambda$,  and the uniform contraction and expansion along $E^{s}$ and $E^{u}$ respectively, there exists    $k\in\NN$    such that 
\begin{itemize}
	\item $f^k(L_{x_0})$ is in the $\delta_1/4$-neighborhood of $L_{x_0}$;
	\item  $f^k(W^{ss}_\e(y))\subset W^{ss}_{\delta_1/4}(f^k(y)) \textrm{~ and ~} f^{-k}(W^{uu}_\e(y))\subset W^{uu}_{\delta_1/4}(f^{-k}(y)), \textrm{~ for any $y\in M$};$
	\item $\max\big\{\sup_{x\in M}\|Df^k|_{E^{s}(x)}\|,\sup_{x\in M}\|Df^{-k}|_{E^{u}(x)}\|\big\}<1/4.$
\end{itemize}

 By Claim~\ref{c.unique-center-leaf}, $W_{\e/2}^u(f^k(L_{x_0}))\cap W_{\e/2}^s(L_{x_0})$ consists of exactly one compact center leaf $\hat{L}_1\subset \Lambda(\delta)$. Assume that we already get  compact center leaves $\{\hat{L}_j\}_{j\leq i-1}\subset\Lambda(\delta)$   such that for $j\leq i-1$, one has
 \begin{itemize}
 	\item   $\hat{L}_{j}\subset W^u_{\e/2} (f^k(\hat{L}_{j-1}))\cap   W^s_{\e/2}(L_{x_0})$;
 	\item  $\hat L_{j}$ intersects $W^{uu}_{\e/2}(z)$ into a unique point for each $z\in f^k(\hat{L}_{j-1})$;
 	\item   $\hat{L}_j$ intersects $W^{ss}_{\e/2}(z)$ into a unique point for  each $z\in L_x$.
 \end{itemize}
 By the choice of $k$, one has $f^k(\hat{L}_{i-1})\subset W^{s}_{\delta_1/4}(f^k(L))$, then once again by Claim~\ref{c.unique-center-leaf}, the intersection $W^u_{\e/2}(f^k(\hat{L}_{i-1}))\cap W^s_{\e/2}(L) $ consists of exactly  compact center leaf $\hat{L}_i$ which by definition is contained in $\Lambda(\delta)$.

 Let $L_i=f^{-ik}(\hat{L}_{2i})$ for each $i\in\NN$. 
By construction, one has 
\begin{itemize}
	\item $L_i\subset \Lambda(\delta)$;
	\item $L_i$ is contained in the $2\e$-tubular neighborhood of $L$;
	\item $W^u_\e(L_i)$ (resp. $W^s_\e(L_i)$) intersects $W^s_\e(L_{x_0})$ (resp. $W^u_\e(f^k(L_{x_0}))$) into a unique compact center leaf;
	\item   $\{f^j(L_i)\}_{j=-ik}^{ik}$ is in the $2\e$-tubular neighborhood of $\{f^{j}(L_{x_0})\}_{j\in [-ik,ik] (mod~k)}$.
\end{itemize}  
 Let $L$ be an accumulation of $\{L_i\}_{i\in\NN}$, then $L$ is a compact center leaf contained in the $\e$-tubular-neighborhood of $L_{x_0}$. Furthermore, $f^j(L)\subset\Lambda(\delta)$ is contained in the $2\e$-tubular-neighborhood of $f^{j-[j/k]k}(L)$, thus $f^k(L)$ has the same property, which implies that the orbit of $f^k(L)$ follows the orbits of $L$ in the distance of $2\e$. Applying item (c) of Theorem 6.1 in ~\cite{HPS} to $\cap_{n\in\ZZ} f^n(\Lambda(\delta))$, one has that $f^k(L)\subset W^s_{2\e}(L)\cap W^{u}_{2\e}(L)$. By Claim~\ref{c.unique-center-leaf}, one has that $f^k(L)=L$.
\endproof

\bibliographystyle{plain}

\begin{thebibliography}{99}

 \bibitem[Boh]{Bo}D. Bohnet, \emph{Codimension-1 partially hyperbolic diffeomorphisms with a uniformly compact center foliation.} J. Mod. Dyn. {\bf 7} (2013), no. 4, 565--604.

\bibitem[BB]{BB}  D. Bohnet and C. Bonatti,  Partially hyperbolic diffeomorphisms with a uniformly compact center foliation: the quotient dynamics. \emph{Ergodic Theory Dynam. Systems} {\bf 36} (2016), no. 4, 1067--1105.

 \bibitem[BCW]{BCW} C. Bonatti, S. Crovisier  and A. Wilkinson, { The $C^1$ generic diffeomorphism has trivial centralizer}. \emph{Publ. Math. Inst. Hautes  \'Etudes Sci}. {\bf 109} (2009), 185--244.


\bibitem[BGHP]{BGHP} C. Bonatti, A. Gogolev, A. Hammerlindl and  R. Potrie, \emph{
	Anomalous partially hyperbolic diffeomorphisms III: abundance and incoherence}, arXiv:1706.04962.

\bibitem[BGP]{BGP} C. Bonatti, A. Gogolev and R. Potrie, Anomalous partially hyperbolic diffeomorphisms II: stably ergodic examples. \emph{Invent. Math}. {\bf 206} (2016), no. 3, 801--836.

\bibitem[BG]{BG}C. Bonatti and N. Guelman, Axiom A diffeomorphisms derived from Anosov flows. \emph{J. Mod. Dyn}. {\bf 4} (2010), no. 1, 1--63.
 
\bibitem[BPP]{BPP} C. Bonatti, K. Parwani and R.  Potrie, {Anomalous partially hyperbolic diffeomorphisms I: dynamically coherent examples}.
\emph{ Ann. Sci. \'Ec. Norm. Sup\'er.} (4) {\bf49} (2016), no. 6, 1387--1402. 

\bibitem[BL]{BL}C. Bonatti and R. Langevin. Diff\'eomorphismes de Smales des surfaces. \emph{Ast\'erisque}  \textbf{250} (1998)  viii+235 pp.


\bibitem[BW]{BW} C. Bonatti and A. Wilkinson, { Transitive partially hyperbolic diffeomorphisms on 3-manifolds}.
 \emph{Topology} {\bf 44} (2005), no. 3, 475--508.

 \bibitem[BZ]{BZ} C. Bonatti and J. Zhang, {Transverse foliations on the torus $\T^2$ and partially hyperbolic diffeomorphisms on 3-manifolds},   \emph{Comment. Math. Helv}. {\bf 92} (2017), no. 3, 513--550.

\bibitem[Bow]{Bow}R. Bowen. \emph{Equilibrium States and the Ergodic Theory of Anosov Diffeomorphisms}. Lecture Notes in Mathematics, 470. Springer, Berlin, 1975.





\bibitem[CN]{CN} C. Camacho and A. Lins Neto,
\emph{Geometric Theory of Foliations},
Birkh$\ddot{a}$user Boston, Inc., Boston, MA, 1985.



\bibitem[Ca]{Ca} P. Carrasco, {Compact dynamical foliations.} \emph{Ergodic Theory Dynam. Systems}  {\bf 35} (2015), no. 8, 2474--2498.

\bibitem[CPH]{CPH} P. Carrasco, E. Pujals, F. Rodriguez-Hertz,
Classification of partially hyperbolic diffeomorphisms under some rigid conditions.
arXiv:1903.09264.

 \bibitem[E]{E} D. Epstein, {Periodic flows on three-manifolds}. \emph{Ann. of Math}. (2) {\bf 95} (1972), 66--82.

\bibitem[Ga]{G} D. Gabai,   \emph{ Foliations and 3-manifolds}. Proceedings of the International Congress of Mathematicians, Vol. I, II (Kyoto, 1990), 609--619, Math. Soc. Japan, Tokyo, 1991.

 \bibitem[Go]{Gogolev} A. Gogolev, {Partially hyperbolic diffeomorphisms with compact center foliations}. \emph{J. Mod. Dyn}. {\bf 5} (2011), no. 4, 747--769. 
 
 \bibitem[HaPo1]{HaPo0}A. Hammerlindl  and R. Potrie, {Pointwise partial hyperbolicity in three-dimensional nilmanifolds}. \emph{J. Lond. Math. Soc. (2)} {\bf 89} (2014), no. 3, 853--875.

\bibitem[HaPo2]{HaPo}A.  Hammerlindl and R.  Potrie, { Classification of partially hyperbolic diffeomorphisms in 3-manifolds with solvable fundamental group}. \emph{J. Topol.} {\bf 8} (2015), no. 3, 842--870.

\bibitem[HaPo3]{HaPo1}A. Hammerlindl and R. Potrie, {Partial hyperbolicity and classification: a survey}.    \emph{Ergodic Theory and Dynamical Systems}. {\bf 38} (2) (2018) 401--443.





\bibitem[HPS]{HPS} M. Hirsch,  C. Pugh and M.Shub, \emph{Invariant manifold}. Lecture Notes in Mathematics, Vol. 583. Springer-Verlag, Berlin-New York, 1977. ii+149 pp.


\bibitem[H]{H} F. Rodriguez Hertz, \emph{Measure Theory and Geometric Topology in Dynamics}, Proceedings of the International Congress of Mathematicians. 2010. Volume III, 1760--1776.

\bibitem[HHU1]{HHU2} F.   Rodriguez Hertz, J.   Rodriguez Hertz  and R. Ures, \emph{ A survey of partially hyperbolic dynamics. Partially hyperbolic dynamics, laminations, and Teichm$\ddot{u}$ler flow}. 35--87, Fields Inst. Commun., 51, Amer. Math. Soc., Providence, RI, 2007.



\bibitem[HHU2]{HHU4} F.   Rodriguez Hertz,  J.  Rodriguez Hertz  and R. Ures, {A non-dynamically coherent example on $\T^3$}.  \emph{ Ann. Inst. H. Poincar\'e Anal. Non Lin\'eaire}  {\bf 33} (2016), no. 4, 1023--1032.


\bibitem[I]{I}H. Imanishi, {On the theorem of Denjoy-Sacksteder for codimension-one foliations without holonomy}. \emph{J. Math. Kyoto U.} {\bf 14} (1974), 607--634.

\bibitem[Na]{Na} A. Navas,  \emph{Groups of circle diffeomorphisms}. Chicago Lectures in Mathematics. University of Chicago Press, Chicago, IL, 2011. xviii+290 pp.


\bibitem[Po]{P} R. Potrie, {A few remarks on partially hyperbolic diffeomorphisms of $\TT^3$ isotopic to Anosov.}
\emph{J. Dynam. Differential Equations} {\bf 26} (2014), no. 3, 805--815.


\bibitem[R]{R} H.  Rosenberg, Foliations by planes. \emph{Topology} {\bf 7} (1968) 131--138.

\bibitem[S]{S}Y. Shi, Partially hyperbolic diffeomorphisms on Heisenberg nilmanifolds and holonomy maps. \emph{C. R. Math. Acad. Sci. Paris} {\bf 352} (2014), no. 9, 743--747.

 
\bibitem[U]{U}R. Ures, Intrinsic ergodicity of partially hyperbolic diffeomorphisms with hyperbolic linear part. \emph{Proceedings of the A. M. S}. {\bf 140}, 1973--1985 (2012).

\bibitem[Z]{Z} J. Zhang, \emph{Partially hyperbolic diffeomorphisms with one-dimensional neutral center on 3-manifolds}. arXiv:1701.06176.

\end{thebibliography}


 \vskip 5pt
 
\noindent Christian Bonatti,

\noindent {\small Institut de Math\'ematiques de Bourgogne\\
UMR 5584 du CNRS}

\noindent {\small Universit\'e de Bourgogne, 21004 Dijon, FRANCE}

\noindent {\footnotesize{E-mail : bonatti@u-bourgogne.fr}}

\vskip 2mm
\noindent Jinhua Zhang,

\noindent{\small Laboratoire de Math\'ematiques d'Orsay,\\
UMR 8628 du CNRS,\\
Universit\'e Paris-Sud,
 91405 Orsay, France.}
\\
\noindent {\footnotesize{E-mail :  zjh200889@gmail.com}}\\

\end{document}